\newtheorem{question}{Question}[section]
\newtheorem{lemma}[question]{Lemma}
\newtheorem{theorem}[question]{Theorem}
\newcommand{\leqnomode}{\tagsleft@true}
\newcommand{\reqnomode}{\tagsleft@false}
\def\dd{\hbox{-}}
\DeclareMathOperator{\tw}{tw}
\newcounter{tbox}
\newcommand{\sta}[1]{\vspace*{0.3cm}\refstepcounter{tbox}\noindent{ \parbox{\textwidth}{(\thetbox) \emph{#1}}}\vspace*{0.3cm}}
\newcommand{\mylongtitle}[1]{%
  \ifodd\value{page}%
    \protect\parbox{0.97\linewidth}{#1}\hfill%
  \else%
    \hfill\protect\parbox{0.97\linewidth}{#1}%
  \fi%
}
\def\dd{\hbox{-}}
\newcommand{\otherlabel}[2]{\protected@edef\@currentlabel{#2}\label{#1}}
\mathchardef\mh="2D
\title[Induced subgraphs and tree decompositions V.]{Induced subgraphs and tree decompositions\\
V. One neighbor in a hole}
\author{Tara Abrishami$^{\ast \dagger}$}
\author{Bogdan Alecu$^{\ast \ast \mathparagraph}$}
\author{Maria Chudnovsky$^{\ast \amalg}$}
\author{Sepehr Hajebi $^{\mathsection}$}
\author{Sophie Spirkl$^{\mathsection \parallel}$}
\author{Kristina Vu\v{s}kovi\'{c}$^{\ast \ast \ddagger}$}
\address{$^{\ast}$Princeton University, Princeton, NJ, USA}
\address{$^{**}$School of Computing, University of Leeds, Leeds, UK}
\address{$^{\mathsection}$Department of Combinatorics and Optimization, University of Waterloo, Waterloo, Ontario, Canada}
\address{$^{\dagger}$ Supported by NSF-EPSRC Grant DMS-2120644.}
\address{$^{\amalg}$ Supported by NSF-EPSRC Grant DMS-2120644 and by AFOSR grant FA9550-22-1-0083.} 
     \address{$^{\mathparagraph}$ Supported by DMS-EPSRC Grant EP/V002813/1.} 
\address{$^{\parallel}$ We acknowledge the support of the Natural Sciences and Engineering Research Council of Canada (NSERC), [funding reference number RGPIN-2020-03912].
Cette recherche a \'et\'e financ\'ee par le Conseil de recherches en sciences naturelles et en g\'enie du Canada (CRSNG), [num\'ero de r\'ef\'erence RGPIN-2020-03912]. This project was funded in part by the Government of Ontario.}
\address{$^{\ddagger}$ Partially supported by DMS-EPSRC Grant EP/V002813/1.}
\date {\today}
\begin{document}
\raggedbottom

\maketitle
\begin{abstract}What are the unavoidable induced subgraphs of graphs with large treewidth? It is well-known that the answer must include a complete graph, a complete bipartite graph, all subdivisions of a wall and line graphs of all subdivisions of a wall (we refer to these graphs as the “basic treewidth obstructions”). So it is natural to ask whether graphs excluding the basic treewidth obstructions as induced subgraphs have bounded treewidth. Sintiari and Trotignon answered this question in the negative. Their counterexamples, the so-called ``layered wheels,'' contain wheels, where a \textit{wheel} consists of a \textit{hole} (i.e., an induced cycle of length at least four) along with a vertex with at least three neighbors in the hole.  This leads one to ask  whether graphs excluding wheels and the basic treewidth obstructions as induced subgraphs have bounded treewidth. This also turns out to be false due to Davies’ recent example of graphs with large treewidth,  no wheels  and no  basic treewidth obstructions as induced subgraphs. However, in Davies' example there exist holes and vertices (outside of the hole) with two neighbors in them. Here we prove that a hole with a vertex with at least two neighbors in it is inevitable in graphs with large treewidth and no basic obstruction. Our main result is that  graphs in which every vertex has at most one neighbor in every hole (that does not contain it) and with the basic treewidth obstructions excluded as induced subgraphs have bounded treewidth.

\end{abstract}

\section{Introduction}
All graphs in this paper are finite and simple. 
Let $H$ and $G$ be graphs.
We say $G$ \emph{contains} $H$ if $G$ has an induced subgraph isomorphic to $H$ (unless stated otherwise). We say that $G$ is \emph{$H$-free} if $G$ does not contain $H$. For a family of graphs $\mathcal{H}$, we say that $G$ is $\mathcal{H}$-free if $G$ is $H$-free for
every $H \in \mathcal{H}$.
A \emph{tree decomposition} $(T, \chi)$ of $G$ consists of a tree $T$ and a map $\chi: V(T) \to 2^{V(G)}$ such that the following hold: 
\begin{enumerate}[(i)]
\itemsep -.2em
    \item For every vertex $v \in V(G)$, there exists $t \in V(T)$ such that $v \in \chi(t)$. 
    
    \item For every edge $v_1v_2 \in E(G)$, there exists $t \in V(T)$ such that $v_1, v_2 \in \chi(t)$.
    
    \item For every $v \in V(G)$, the subgraph of $T$ induced by $\{t \in V(T) \mid v \in \chi(t)\}$ is connected.
\end{enumerate}

If $(T, \chi)$ is a tree decomposition of $G$ and $V(T) = \{t_1, \hdots, t_n\}$, the sets $\chi(t_1), \hdots, \chi(t_n)$ are called the \emph{bags of $(T, \chi)$}.  The \emph{width} of a tree decomposition $(T, \chi)$ is $\max_{t \in V(T)} |\chi(t)|-1$. The \emph{treewidth} of $G$, denoted $\tw(G)$, is the minimum width of a tree decomposition of $G$.

Treewidth is an extensively-studied graph parameter, mostly due to the fact that graphs of bounded treewidth exhibit interesting structural
\cite{RS-GMXVI} and algorithmic \cite{Bodlaender1988DynamicTreewidth} properties. It is thus of interest to understand the  unavoidable substructures emerging in graphs of large treewidth (these are often referred to as ``obstructions to bounded treewidth''). For instance, for each $k$, the {\em $(k \times k)$-wall}, denoted by $W_{k \times k}$, is a planar graph with maximum degree three and with treewidth $k$ (see Figure \ref{fig:5x5wall}; a precise definition can be found in \cite{wallpaper}). Every subdivision of $W_{k \times k}$ is also a
graph of treewidth $k$. The unavoidable subgraphs of graphs with large treewidth are fully characterized by the Grid Theorem of Robertson and Seymour, the following.
\begin{theorem}[\cite{RS-GMV}]\label{wallminor}
There is a function $f: \mathbb{N} \rightarrow \mathbb{N}$
such that every graph of treewidth at least $f(k)$
contains a subdivision of $W_{k \times k}$ as a subgraph.
\end{theorem}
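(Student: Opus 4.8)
The plan is to deduce Theorem~\ref{wallminor} in two main steps, after one preliminary observation. The observation is that, since $W_{k\times k}$ has maximum degree three, a graph contains $W_{k\times k}$ as a minor if and only if it contains a subdivision of $W_{k\times k}$ as a subgraph: in a minor model of a graph of maximum degree three, each branch set may be pruned to a minimal subtree joining its (at most three) attachment vertices, and such a subtree is a subdivided path or claw, which is exactly what a topological model requires. So it suffices to prove that large treewidth forces a $W_{k\times k}$ \emph{minor}; and since a sufficiently large grid contains $W_{k\times k}$ as a subgraph, it is in fact enough to force an arbitrarily large grid minor. The two steps are then: Step~(I), graphs of large treewidth contain a large ``highly linked'' set of vertices; and Step~(II), a large highly linked set forces a large grid minor.

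For Step~(I) I would invoke the min--max duality for treewidth. A \emph{bramble} of $G$ is a family of connected subgraphs of $G$ that pairwise touch (share a vertex, or are joined by an edge), and its order is the least size of a vertex set meeting all of them; by the theorem of Seymour and Thomas, $\tw(G)\ge w$ if and only if $G$ has a bramble of order at least $w+1$. From a bramble of order much larger than $k$ one extracts a \emph{well-linked set}: a set $Z\subseteq V(G)$ such that for every partition $Z=X\cup Y$ there are $\min(|X|,|Y|)$ pairwise vertex-disjoint $X$--$Y$ paths in $G$. (Alternatively, and perhaps more directly, one argues the contrapositive: if $G$ has no well-linked set of size $s$, then recursively splitting along the small separators that witness each failure of well-linkedness assembles a tree decomposition of width less than a constant times $s$.) Hence, taking $f$ to grow fast enough, $\tw(G)\ge f(k)$ produces a well-linked set whose size is as large a function of $k$ as we wish.

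Step~(II) is the core of the argument, and the step I expect to be the main obstacle. Given a large well-linked set $Z$, the goal is to carve a subdivided grid out of $G$. The natural approach is to build it up in concentric layers, or equivalently to route two large families of pairwise disjoint paths --- the prospective ``rows'' and ``columns'' --- using well-linkedness to force many crossings between the two families, and then to throw away all but a subfamily of each whose mutual crossings occur in the linear order of a grid. Forcing the crossings to occur in \emph{grid order} rather than in a scrambled pattern is the delicate point: it requires iterating the routing-and-cleaning step many times together with Ramsey- or pigeonhole-type selection, which is precisely why the resulting bound on $f$ is enormous (iterated exponentials) in the classical Robertson--Seymour proof. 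A further technical nuisance is that the paths delivered by well-linkedness may wander and reuse vertices, so one needs careful rerouting lemmas to keep the partially built grid clean as it grows.

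Finally one assembles the pieces: choose $f(k)$ so that treewidth at least $f(k)$ yields, via Step~(I), a well-linked set large enough for Step~(II) to produce a grid minor containing $W_{k\times k}$, and then the preliminary observation upgrades this to a subdivision of $W_{k\times k}$ contained in $G$. If explicit polynomial bounds on $f$ were wanted one would instead run the more recent Chuzhoy--Tan argument in place of Step~(II), but for the qualitative statement the outline above suffices.
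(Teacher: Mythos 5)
Theorem~\ref{wallminor} is not proved in this paper at all: it is the Robertson--Seymour Grid Theorem, quoted from \cite{RS-GMV} and used as a black box, so there is no internal argument to compare your attempt with. Your review therefore has to be judged on its own terms, as a self-contained proof of the excluded-grid theorem.

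On those terms, your preliminary observation and Step~(I) are fine at the level of detail given: since $W_{k\times k}$ has maximum degree three, a $W_{k\times k}$ minor can indeed be pruned to a topological model and hence yields a subdivision as a subgraph, a large grid does contain $W_{k\times k}$ as a subgraph, and the passage from large treewidth to a large bramble (Seymour--Thomas) and then to a large well-linked set (or the contrapositive assembly of a tree decomposition from small separators) is standard and could be written out. The genuine gap is Step~(II), which you yourself flag as ``the main obstacle'': everything you say there --- route prospective rows and columns, force many crossings, use Ramsey/pigeonhole selection to make the crossings occur in grid order, with rerouting lemmas to keep the partial grid clean --- is a description of what a proof must accomplish, not an argument that accomplishes it. This step is the entire content of the Grid Theorem; without carrying it out (or explicitly invoking a published proof of the well-linked-set-to-grid-minor implication, e.g.\ Robertson--Seymour, Diestel et al., or Chuzhoy--Tan, as a citation rather than as a sketch), the proposal does not constitute a proof of Theorem~\ref{wallminor}. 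As it stands it is an accurate roadmap of the known proofs, which is precisely why the paper, like virtually all users of this theorem, cites it instead of reproving it.
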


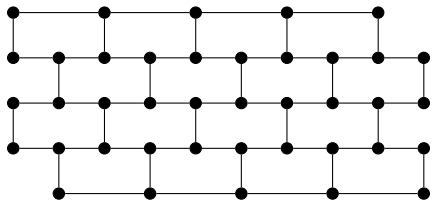
\begin{figure}[!htb]
\centering

\begin{tikzpicture}[scale=2,auto=left]
\tikzstyle{every node}=[inner sep=1.5pt, fill=black,circle,draw]  
\centering

\node (s10) at (0,1.2) {};
\node(s12) at (0.6,1.2){};
\node(s14) at (1.2,1.2){};
\node(s16) at (1.8,1.2){};
\node(s18) at (2.4,1.2){};

\node (s20) at (0,0.9) {};
\node (s21) at (0.3,0.9) {};
\node(s22) at (0.6,0.9){};
\node (s23) at (0.9,0.9) {};
\node(s24) at (1.2,0.9){};
\node (s25) at (1.5,0.9) {};
\node(s26) at (1.8,0.9){};
\node (s27) at (2.1,0.9) {};
\node(s28) at (2.4,0.9){};
\node (s29) at (2.7,0.9) {};

\node (s30) at (0,0.6) {};
\node (s31) at (0.3,0.6) {};
\node(s32) at (0.6,0.6){};
\node (s33) at (0.9,0.6) {};
\node(s34) at (1.2,0.6){};
\node (s35) at (1.5,0.6) {};
\node(s36) at (1.8,0.6){};
\node (s37) at (2.1,0.6) {};
\node(s38) at (2.4,0.6){};
\node (s39) at (2.7,0.6) {};

\node (s40) at (0,0.3) {};
\node (s41) at (0.3,0.3) {};
\node(s42) at (0.6,0.3){};
\node (s43) at (0.9,0.3) {};
\node(s44) at (1.2,0.3){};
\node (s45) at (1.5,0.3) {};
\node(s46) at (1.8,0.3){};
\node (s47) at (2.1,0.3) {};
\node(s48) at (2.4,0.3) {};
\node (s49) at (2.7,0.3) {};

\node (s51) at (0.3,0.0) {};
\node (s53) at (0.9,0.0) {};
\node (s55) at (1.5,0.0) {};
\node (s57) at (2.1,0.0) {};
\node (s59) at (2.7,0.0) {};

\foreach \from/\to in {s10/s12, s12/s14,s14/s16,s16/s18}
\draw [-] (\from) -- (\to);

\foreach \from/\to in {s20/s21, s21/s22, s22/s23, s23/s24, s24/s25, s25/s26,s26/s27,s27/s28,s28/s29}
\draw [-] (\from) -- (\to);

\foreach \from/\to in {s30/s31, s31/s32, s32/s33, s33/s34, s34/s35, s35/s36,s36/s37,s37/s38,s38/s39}
\draw [-] (\from) -- (\to);

\foreach \from/\to in {s40/s41, s41/s42, s42/s43, s43/s44, s44/s45, s45/s46,s46/s47,s47/s48,s48/s49}
\draw [-] (\from) -- (\to);

\foreach \from/\to in {s51/s53, s53/s55,s55/s57,s57/s59}
\draw [-] (\from) -- (\to);

\foreach \from/\to in {s10/s20, s30/s40}
\draw [-] (\from) -- (\to);

\foreach \from/\to in {s21/s31,s41/s51}
\draw [-] (\from) -- (\to);

\foreach \from/\to in {s12/s22, s32/s42}
\draw [-] (\from) -- (\to);

\foreach \from/\to in {s23/s33,s43/s53}
\draw [-] (\from) -- (\to);

\foreach \from/\to in {s14/s24, s34/s44}
\draw [-] (\from) -- (\to);

\foreach \from/\to in {s25/s35,s45/s55}
\draw [-] (\from) -- (\to);

\foreach \from/\to in {s16/s26,s36/s46}
\draw [-] (\from) -- (\to);

\foreach \from/\to in {s27/s37,s47/s57}
\draw [-] (\from) -- (\to);

\foreach \from/\to in {s18/s28,s38/s48}
\draw [-] (\from) -- (\to);

\foreach \from/\to in {s29/s39,s49/s59}
\draw [-] (\from) -- (\to);

\end{tikzpicture}

\caption{$W_{5 \times 5}$}
\label{fig:5x5wall}
\end{figure}

Following the same line of thought, our motivation in this series is to study
induced subgraph obstructions  to bounded treewidth. In addition to subdivided walls mentioned above, complete graphs and complete bipartite graphs are easily observed to have arbitrarily large treewidth: the complete graph $K_{t+1}$ and the complete bipartite graph $K_{t,t}$ both have treewidth $t$. Line graphs of subdivided walls form another family of graphs with unbounded treewidth, where the {\em line graph} $L(F)$ of a graph $F$ is the graph with vertex set $E(F)$, such that two vertices of $L(F)$ are adjacent if the corresponding edges of $G$ share an end.  

We call a family $\mathcal{H}$ of graphs {\em useful} if
there exists an integer $c(\mathcal{H})$ such that every $\mathcal{H}$-free graph has treewidth at most $c(\mathcal{H})$. The discussion above can be summarized as follows:

\begin{theorem}
  \label{necessary} If $\mathcal{H}$ is a useful family of graphs,
  then there exists an integer $t$ such that   $\mathcal{H}$ contains
  $K_t,K_{t,t}$, an induced subgraph of each subdivision of $W_{t \times t}$ and
  an induced subgraph of the line graph
  of each subdivision of $W_{t \times t}$.
  \end{theorem}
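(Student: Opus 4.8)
The plan is to read the statement off the definition of a useful family, applied once to each of the four basic obstruction families, and then to choose a single parameter that fits all four. Fix $c=c(\mathcal{H})$, so that every graph of treewidth more than $c$ contains a member of $\mathcal{H}$ as an induced subgraph. I will use only the treewidth values recalled in the discussion above: $\tw(K_{n})=n-1$, $\tw(K_{n,n})=n$, every subdivision of $W_{n\times n}$ has treewidth $n$, and (as recalled, and standardly) line graphs of subdivisions of $W_{n\times n}$ have treewidth tending to infinity with $n$.

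First I would treat the dense obstructions. Since $\tw(K_{c+2})=c+1>c$, the graph $K_{c+2}$ contains some $H\in\mathcal{H}$ as an induced subgraph; every induced subgraph of a complete graph is complete, so $H$ is a complete graph. Similarly $\tw(K_{c+1,c+1})=c+1>c$, so $K_{c+1,c+1}$ contains some $H'\in\mathcal{H}$ as an induced subgraph, and every induced subgraph of $K_{n,n}$ is complete bipartite, so $H'$ is a complete bipartite graph. Hence, for every $t\ge c+2$, the family $\mathcal{H}$ contains an induced subgraph of $K_{t}$ and an induced subgraph of $K_{t,t}$.

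Next I would treat the sparse obstructions. Take $t\ge c+2$ (so in particular $t>c$). Every subdivision $W'$ of $W_{t\times t}$ has treewidth $t>c$, hence is not $\mathcal{H}$-free, so $\mathcal{H}$ contains an induced subgraph of $W'$; as this holds for all such $W'$, $\mathcal{H}$ contains an induced subgraph of every subdivision of $W_{t\times t}$. For the line graphs, enlarge $t$ so that $\tw(L(W_{t\times t}))>c$. For every subdivision $W'$ of $W_{t\times t}$ the graph $L(W_{t\times t})$ is a minor of $L(W')$ --- contract, for each edge $e$ of $W_{t\times t}$, the path in $L(W')$ formed by the edges into which $e$ is subdivided --- so $\tw(L(W'))\ge\tw(L(W_{t\times t}))>c$. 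Thus every $L(W')$ contains a member of $\mathcal{H}$, and $\mathcal{H}$ contains an induced subgraph of the line graph of every subdivision of $W_{t\times t}$. The value of $t$ reached after these enlargements witnesses all four conclusions simultaneously.

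Since the statement is essentially a reformulation of the definition of usefulness, I do not expect a serious obstacle. The two points that actually use something are the monotonicity facts about treewidth of the basic obstructions --- notably that \emph{every} subdivision of $W_{n\times n}$, not merely $W_{n\times n}$ itself, has treewidth $n$ --- and the minor relation between $L(W_{t\times t})$ and $L(W')$ that transfers the unboundedness of the treewidth of line graphs of walls uniformly to all their subdivisions; I would regard verifying the latter as the ``hardest'' step, and it is routine. (One should not hope for the stronger reading in which a single member of $\mathcal{H}$ is an induced subgraph of \emph{every} subdivision of $W_{t\times t}$: that already fails for $\mathcal{H}=\{C_i:i\ge 3\}$, where $\mathcal{H}$-free means a forest --- so $\mathcal{H}$ is useful --- yet no fixed cycle is an induced subgraph of a subdivided wall of larger girth. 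So the role of $t$ is to depend only on $\mathcal{H}$, while the witnessing induced subgraph is allowed to depend on the chosen subdivision.)
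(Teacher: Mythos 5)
Your argument is correct and is essentially the paper's own justification: Theorem \ref{necessary} is presented there only as a summary of the preceding discussion (each of the four basic obstruction families has unbounded treewidth, so a useful family must meet each of them), and your write-up simply makes this precise, including the routine minor argument that transfers large treewidth from $L(W_{t\times t})$ to the line graph of every subdivision. The only caveat is one of reading, which your proof in effect adopts and which matches the abstract's phrasing: the clause ``$\mathcal{H}$ contains $K_t, K_{t,t}$'' should be understood as ``$\mathcal{H}$ contains a complete graph and a complete bipartite graph'' (i.e.\ induced subgraphs of $K_t$ and $K_{t,t}$), which is exactly what your argument delivers.
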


The following was conjectured in \cite{aboulker} and proved in \cite{Korhonen}:
\begin{theorem}\label{boundeddeg} \cite{Korhonen}
For all $k, \Delta > 0$, there exists $c = c(k, \Delta)$ such that every graph with
maximum degree at most $\Delta$ and treewidth at least $c$ contains a subdivision of $W_{k\times k}$ or the
line graph of a subdivision of $W_{k \times k}$ as an induced subgraph.
\end{theorem}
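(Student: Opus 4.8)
\emph{Proof sketch.} (The full argument is that of \cite{Korhonen}; we only outline a strategy.) The plan is to bootstrap from the non-induced Grid Theorem and spend all the remaining effort turning the resulting wall model into an \emph{induced} one, using the degree bound $\Delta$ throughout. Fix $k$ and $\Delta$, let $m = m(k,\Delta)$ be enormous, and suppose $\tw(G) \ge f(m)$ with $f$ as in Theorem~\ref{wallminor}, so that $G$ has $W_{m\times m}$ as a minor. Among all minor models of $W_{m\times m}$ in $G$ pick one, $(B_u : u \in V(W_{m\times m}))$, that is \emph{minimal}, meaning that no vertex can be removed from any branch set while keeping a valid model. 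Minimality forces $G[B_u]$ to be a tree whose leaves are exactly its vertices having a neighbour in some adjacent branch set; since $W_{m\times m}$ has maximum degree $3$, this tree has at most three leaves, hence is topologically a path when $\deg_{W_{m\times m}}(u) \le 2$ and a path or a subdivided claw when $u$ is a branch vertex. Choosing one edge of $G$ between each pair of adjacent branch sets and letting $H$ be the union of the $B_u$ together with these edges, $H$ is a subdivision of $W_{m\times m}$ sitting inside $G$; if $H$ were an \emph{induced} subgraph we would be done after restricting to a sub-wall of side $k$, so the whole problem is to eliminate the \emph{cross-edges}: edges of $G$ between non-consecutive vertices of $H$.

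Cross-edges come in two flavours. The benign ones --- chords of a single subdivision path, or an edge from a path to a nearby vertex --- can be destroyed by rerouting $H$ and re-minimising, at the cost of shrinking or sliding subdivision paths. The dangerous ones join two far-apart parts of $H$ and cannot be fixed locally; when one reroutes to kill such an edge, the reroute may be forced through a short cycle, and if re-minimisation cannot remove that cycle then the local picture becomes a triangle with three internally disjoint paths leaving it --- precisely the local picture of the line graph of a subdivided wall. A pigeonhole over the $\Theta(m^2)$ branch vertices then lets one assume a single regime on a large sub-wall: either every branch vertex is realised by a single vertex of $G$ with three emanating paths, with target a subdivision of $W_{k\times k}$, or every branch vertex is realised by a triangle carrying three paths, with target the line graph of a subdivision of $W_{k\times k}$; the two cases run in parallel from here on.

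What remains --- extracting a sub-wall of side $k$ on which $H$ is induced --- is the crux, and is where I expect essentially all the difficulty to concentrate. The naive approach, namely to treat each of the $\Theta(m^2)$ bricks of $H$ as a vertex of a conflict graph (two bricks adjacent when a cross-edge joins them) and extract a large induced sub-wall as a quasi-independent set, fails: a single long subdivision path can carry unboundedly many cross-edges, so conflict degrees are not bounded in terms of $\Delta$ alone. The way around this is a dichotomy applied recursively. If some subdivision path $P$ sends many cross-edges into one other region, then, as every vertex has at most $\Delta$ neighbours, those cross-edges spread over many vertices and by themselves route a sizeable grid-like structure, on which one recurses with a smaller target. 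If no path is dirty in this strong sense, then every path carries only boundedly many cross-edges, can be truncated or locally rerouted to kill them all, and the bounded-conflict pigeonhole now does produce the required induced sub-wall (respectively, its line graph). Since every vertex of $G$ lies in only boundedly many bricks, each recursive step damages only boundedly many local pieces, so with $m = m(k,\Delta)$ large enough the recursion terminates, giving \emph{some} bound $c(k,\Delta)$; wringing out the polynomial dependence on $\Delta$ obtained in \cite{Korhonen} requires a considerably more delicate cleanup. An essentially equivalent and arguably cleaner route is to first prove the standalone ``grid induced minor theorem'' --- a graph of maximum degree at most $\Delta$ and treewidth at least $c(k,\Delta)$ contains $W_{k'\times k'}$ as an \emph{induced} minor for suitable $k'$ --- and then deduce Theorem~\ref{boundeddeg} from it via a self-contained lemma that converts an induced minor model of a large wall into an induced subdivision of $W_{k\times k}$ or an induced line graph of such a subdivision, along exactly the structural lines above.
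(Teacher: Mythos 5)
The paper does not prove Theorem~\ref{boundeddeg} at all: it is an external result, conjectured in \cite{aboulker} and proved in \cite{Korhonen}, and the paper simply cites it. So the only thing required here is the citation, and there is no internal proof to compare your argument against. Judged on its own terms, your text is not a proof but a narrative outline, and the places where you yourself locate the difficulty are exactly the places where the argument is missing. Two concrete gaps: (1) the claimed dichotomy for ``dirty'' subdivision paths does not work as stated --- if a path $P$ of the wall model sends many cross-edges into another region, the degree bound only tells you these edges are spread over many vertices of $P$; a bounded-degree graph consisting of a path plus many edges to another path can be outerplanar-like and of treewidth $2$--$3$, so these cross-edges do not ``by themselves route a sizeable grid-like structure'' on which one can recurse, and no alternative way of making progress in this case is given; (2) the pigeonhole step that is supposed to put a large sub-wall into a single regime (every branch vertex realised by one vertex, versus every branch vertex realised by a triangle) is asserted without any argument that the two regimes cannot be forced to interleave, and the subsequent ``bounded-conflict pigeonhole'' extraction of an induced sub-wall is likewise only gestured at. Re-minimisation and local rerouting are also far more delicate than the sketch suggests, since a reroute can create new cross-edges elsewhere.

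Your closing sentence does identify the actual route of \cite{Korhonen}: first a grid/wall \emph{induced minor} theorem for graphs of bounded degree, then a conversion of an induced minor model of a large wall into an induced subdivision of $W_{k\times k}$ or the line graph of one (a step that predates \cite{Korhonen} and appears in the literature on this problem). But in your write-up both halves are named rather than proved, so the proposal as it stands establishes nothing beyond what the citation already provides; for the purposes of this paper, quoting \cite{Korhonen} is both necessary and sufficient.
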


The bounded-degree condition of Theorem \ref{boundeddeg} implies that $K_{\Delta+2}$ and $K_{\Delta+1, \Delta+1}$ are excluded. However, Theorem \ref{boundeddeg} does not hold if ``bounded degree'' is replaced by excluding $K_{\Delta+2}$ and $K_{\Delta+1, \Delta+1}$, as is evidenced by the constructions  of \cite {Davies} and  \cite{layered-wheels}. Thus a natural question arises:  what can replace this condition? 
Let us call a family $\mathcal{F}$ of graphs {\em helpful} if the following
holds: 
  for all $t > 0$, there exists $c = c(t)$ such that every
  $\mathcal{F}$-free graph
  with  treewidth more than
  $c$ contains $K_t$, $K_{t,t}$, a subdivision of $W_{t\times t}$ or the
line graph of a subdivision of $W_{t \times t}$.
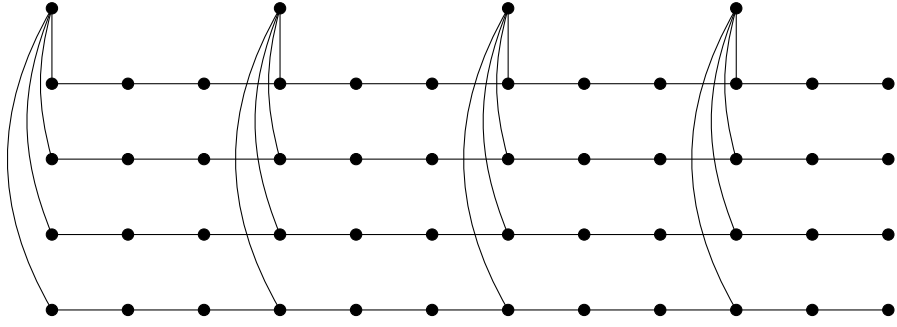
\begin{figure}[!htb]
\centering

\begin{tikzpicture}[scale=1,auto=left]
\tikzstyle{every node}=[inner sep=1.5pt, fill=black,circle,draw]  
\centering
        
\foreach \i in {1,...,4} {
        \draw (1, \i) -- (12, \i);
        \node() at (\i * 3 - 2, 5) {};
        
        \draw (\i * 3 - 2, 5) edge[out = -90, in = 90] (\i * 3 - 2, 4);
        
        \draw (\i * 3 - 2, 5) edge[out = -105, in = 105] (\i * 3 - 2, 3);
        
        \draw (\i * 3 - 2, 5) edge[out = -112, in = 112] (\i * 3 - 2, 2);
        
        \draw (\i * 3 - 2, 5) edge[out = -120, in = 120] (\i * 3 - 2, 1);

    }

\foreach \i in {1,...,12} {
            \foreach \x in {1,...,4} {
            \node() at (\i, \x) {};
                
            }
        }

\end{tikzpicture}

\caption{A wheel-free graph with large treewidth \cite{Davies}}
\label{fig:G34}
\end{figure}

A {\em hole} in a graph is an induced cycle of length at least four. The {\em length} of a hole is the number of vertices in it. A {\em wheel} is a graph consisting of a hole $C$ and a vertex $v$ with at least three neighbors in $C$ (in the literature, sometimes further restrictions are placed on the location of the neighbors of $v$ in $C$).
In view of the prevalence of wheels in the  construction of \cite{layered-wheels}, one might ask if the
family of all wheels is helpful. The answer to this question is negative, because of the construction of \cite{Davies} (see Figure \ref{fig:G34} for an example; we omit the precise definition), but the following weaker statement is true, and it is the main result of this paper. Let $\mathcal{T}_1$ be the family of
all graphs consisting of a hole $C$  and a vertex outside of $C$ with at least two neighbors in $C$. 
The class of $\mathcal{T}_1$-free graphs was studied in \cite{artv}; in  Section~\ref{sec:heavyseagulls} we strengthen their results.  A crucial difference between
Theorem~\ref{breakextheavyseagull} and \cite{artv} is that in \cite{artv} only the existence of
certain cutsets is shown, while we are able to guarantee that every heavy seagull is broken by a cutset of the required type (see Section~\ref{sec:heavyseagulls} for details).

Our main result in this paper is the following:
\begin{theorem}\label{helpful_main}
  The family $\mathcal{T}_1$ is helpful.
\end{theorem}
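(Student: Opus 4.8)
The plan is to argue by contradiction. Fix $t\ge 1$ and suppose $G$ is $\mathcal{T}_1$-free, contains none of $K_t$, $K_{t,t}$, a subdivision of $W_{t\times t}$, and the line graph of a subdivision of $W_{t\times t}$, and yet $\tw(G)$ exceeds a bound $c(t)$ to be fixed at the end of the argument. Since cliques are induced subgraphs, $K_t$-freeness gives $\omega(G)\le t-1$, so from now on both $\omega(G)$ and the $K_{t,t}$-freeness are available as fixed hypotheses. Moreover a $\mathcal{T}_1$-free graph cannot contain a large line graph of a subdivided wall: in such a line graph, a hole arising from a brick of the wall has an outside vertex (any edge of the wall leaving a corner of that brick) with two neighbours on it. Hence it suffices to exhibit $K_t$, $K_{t,t}$, or a subdivision of $W_{t\times t}$ in $G$. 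By the Grid Theorem (Theorem~\ref{wallminor}), $G$ contains, for $k$ as large as we please, a subdivision $W$ of $W_{k\times k}$ as a (not necessarily induced) subgraph, and the task is to understand how $W$ sits inside $G$ --- which chords $W$ carries, and which vertices of $G$ have neighbours on which of its subdivision paths --- so as to extract from it an induced wall subdivision, a contradiction.

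The driving point is that $\mathcal{T}_1$-freeness is precisely a statement about holes and their neighbourhoods, and the long subdivision paths of $W$, once closed up along $W$, are holes. Thus $\mathcal{T}_1$-free graphs contain no wheel, no prism, no pyramid, no $K_{2,3}$, no theta having a path of length two, and no hole of length at least five with a chord joining two of its vertices at distance two; but --- crucially, or else no such $W$ could live in $G$ at all --- they may contain long thetas, long induced ladders, holes with far-apart chords, and of course (sufficiently subdivided) walls. What the hypothesis forbids is exactly the short-range ``crossing'' patterns behind the known non-basic constructions: the fan vertex of the Davies graph has two neighbours on a hole formed from two of its long paths, and the hub of a layered wheel is a wheel. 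So as one cleans $W$, every chord of $W$ and every vertex of $G$ attaching to several of its subdivision paths must attach ``benignly'' --- far apart along a single path, or along a shared merged stretch, or via a long detour --- never in the short bracketed fashion that would create a forbidden theta, or a hole with an outside vertex having two neighbours on it.

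The final step turns this into a bounded-width tree decomposition. The constraints above confine the short-range interaction between any two long subdivision paths of $W$, and between any ``fan-like'' vertex of $G$ and these paths, to a region of bounded size --- bounded because, by $\omega(G)\le t-1$ and $K_{t,t}$-freeness, only boundedly many vertices can crowd into such a region with the required adjacency pattern. Cutting $G$ along the bounded-size separators these regions provide, each resulting piece is either essentially chordal (hence of treewidth $<t$), or already displays a large induced subdivision of a wall (and we are done), or is strictly smaller and disposed of by induction on $|V(G)|$; gluing the pieces along the natural tree gives a tree decomposition of $G$ of width bounded in terms of $t$, contradicting $\tw(G)>c(t)$ once the constants have been tracked through the Grid Theorem and the cleaning step, and thereby fixing an admissible $c(t)$.

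I expect the main obstacle to be the cleaning and extraction step. Since $\mathcal{T}_1$-free graphs do contain long thetas, long ladders and holes with far-apart chords, one cannot simply forbid the extra structure attached to $W$: it must be carefully sorted, the ``benign but unbounded'' features being absorbed into the tree decomposition while only the ``short-range bracketed'' features are used to derive contradictions. Controlling three or more long induced paths interacting at once --- ruling out a large induced wall that is present in $G$ but ``spread thin'', so that no single pair of paths exhibits it --- is the delicate point, and is where most of the technical work, presumably building on the decomposition machinery developed in the earlier parts of this series, will be concentrated.
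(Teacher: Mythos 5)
Your opening reductions are fine and match what the paper implicitly does: a $\mathcal{T}_1$-free graph contains no $K_{2,3}$, hence no $K_{t,t}$ for $t\ge 3$, and no line graph of a subdivision of a large wall, so proving helpfulness of $\mathcal{T}_1$ does come down to showing that a $K_t$-free, $\mathcal{T}_1$-free graph of huge treewidth contains an \emph{induced} subdivision of $W_{t\times t}$ (this is essentially Theorem \ref{sparsethm}). But from that point on what you have is a plan, not a proof, and the plan has a genuine gap at its core. The ``cleaning'' step --- starting from the subgraph wall given by Theorem \ref{wallminor} and extracting an induced wall subdivision, or else a bounded-width decomposition --- is exactly the hard content of the theorem, and you give no argument for it. In particular, the claim that $\omega(G)\le t-1$ together with $K_{t,t}$-freeness forces only ``boundedly many vertices to crowd into'' the short-range interaction regions is unsupported and, as stated, not true in any usable form: bounded clique and biclique numbers do not bound the number of vertices attaching to a long subdivision path or to the wall (unbounded degree is precisely what makes the layered-wheel and Davies constructions dangerous), so your ``bounded-size separators'' are never shown to exist, the assertion that the resulting pieces are ``essentially chordal'' is not justified, and no tree decomposition is actually built. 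The induction on $|V(G)|$ likewise has no base or inductive mechanism specified.

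For comparison, the paper never cleans a wall inside a graph of unbounded degree. It first uses sparseness to turn star cutsets into clique cutsets (Lemma \ref{startoclique}) and reduces to triangle-free graphs (Lemma \ref{cliqueortrianglefree}); then, working with balanced separators rather than explicit decompositions, it breaks every heavy seagull by an active two-clique-separation (Lemma \ref{norestricted}, Theorem \ref{breakextheavyseagull}), shows these separations are pairwise loosely non-crossing (Theorem \ref{noncrossing2cliques}), and uses the central bag method with marker paths (Theorem \ref{unbalancedbeta}) to pass to an induced subgraph $\beta$ with no heavy seagull whose treewidth controls that of $G$. In $\beta$ the vertices of degree at least three induce components of bounded size ($\gamma_3$ bounded), and only then is a wall extracted, by invoking the bounded-degree theorem of Korhonen (Theorem \ref{boundeddeg}, via Theorem \ref{smalldensecomps}). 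Your proposal has no substitute for this last ingredient, nor for the separator machinery that makes it applicable, so the argument as written does not go through.
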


In fact, we prove something stronger. In the following, the \emph{length of a path} is its number of edges. 
A {\em pyramid} is a graph consisting of a vertex $a$, a triangle $\{b_1, b_2, b_3\}$, and three paths $P_i$ from $a$ to $b_i$ for $1 \leq i \leq 3$ of length at least one, such that for $i \neq j$ the only edge between
$P_i \setminus \{a\}$ and $P_j \setminus \{a\}$ is $b_ib_j$, and at most one of $P_1, P_2, P_3$ has length exactly one.

A {\em prism} is a graph consisting of two triangles $\{a_1,a_2,a_3\}$ and  $\{b_1, b_2, b_3\}$, and three paths $P_i$ from $a_i$ to $b_i$ for $1 \leq i \leq 3$,
all of length at least one, and such that for $i \neq j$ the only edges between
$P_i$ and $P_j$ are $a_ia_j$ and $b_ib_j$.

Let $\mathcal{T}_2$ be the family of
all graphs consisting of a hole  $C$  and a vertex outside of $C$ with at least two non-adjacent neighbors in $C$,
together with all prisms and all pyramids. Note that each graph in $\mathcal{T}_2$ contains a graph in $\mathcal{T}_1$ (so the class of $\mathcal{T}_1$-free graphs is properly contained in the class of $\mathcal{T}_2$-free graphs). We prove:

\begin{theorem}\label{helpful_main2}
  The family $\mathcal{T}_2$ is helpful.
\end{theorem}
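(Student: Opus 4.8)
The plan is to reduce the problem to the bounded-degree case and then invoke Theorem~\ref{boundeddeg}. Fix $t$, and suppose for contradiction that there exist $\mathcal{T}_2$-free graphs of arbitrarily large treewidth, none of which contains $K_t$, $K_{t,t}$, a subdivision of $W_{t\times t}$, or the line graph of a subdivision of $W_{t\times t}$. Let $G$ be such a graph with $\tw(G)$ very large in terms of $t$. By Theorem~\ref{wallminor}, $G$ contains a subdivision $W$ of $W_{k\times k}$ as a subgraph, and by taking $\tw(G)$ large enough we may take $k$ as large as we wish in terms of $t$. Among all such subgraphs we fix one, $W$, with $|V(W)|$ as small as possible. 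This minimality has an immediate payoff: no edge of $G$ joins two vertices of a single branch-path of $W$ (such an edge would let us shorten that branch-path and produce a smaller subdivision of $W_{k\times k}$), so each branch-path of $W$ is an induced path of $G$.

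The goal is to show that, perhaps after passing to a subwall $W'$ of $W$ that is still a subdivision of a large wall, the induced subgraph $G[V(W')]$ has bounded maximum degree $\Delta=\Delta(t)$. This suffices: $G[V(W')]$ contains $W'$ as a subgraph, so $\tw(G[V(W')])$ is still large; choosing $W'$ large enough that $\tw(G[V(W')])\ge c(t,\Delta)$ and applying Theorem~\ref{boundeddeg}, we obtain inside $G[V(W')]$ --- hence, since $G[V(W')]$ is an induced subgraph of $G$, inside $G$ itself --- an induced subdivision of $W_{t\times t}$ or the line graph of such a subdivision, contradicting the choice of $G$. Note that vertices of $G$ outside $W'$ play no role here; everything takes place inside $V(W)$.

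So the whole argument comes down to bounding, for a vertex $v\in V(W)$, the number of \emph{chord-partners} of $v$: neighbours $y$ of $v$ in $V(W)$ with $vy\notin E(W)$ (by minimality such a $y$ does not lie on a branch-path incident with $v$). The engine is the combination of the exclusions defining $\mathcal{T}_2$ with the connectivity and triangle-freeness of walls. If $v$ has two chord-partners $y_1,y_2$ that are adjacent, then $\{v,y_1,y_2\}$ is a triangle and, using induced paths of $W$ running to a common vertex, one typically builds a pyramid --- except when those paths are polluted by so many chords that the relevant part of $G[V(W)]$ contains $K_t$ or $K_{t,t}$. Hence the chord-partners of $v$ form a stable set unless a forbidden induced subgraph has already been found. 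If now $y_1,y_2$ are non-adjacent chord-partners, one finds (again using that a large wall minus a few vertices is still richly connected, together with the fact that branch-paths are chordless) a hole $C$ of $G$ with $v\notin C$ and $y_1,y_2\in C$; then the induced subgraph on $V(C)\cup\{v\}$ lies in $\mathcal{T}_2$, a contradiction. In the degenerate positions where this hole cannot be extracted directly, one instead produces a prism or a pyramid (or, once more, a large clique or biclique). The conclusion is that every $v\in V(W)$ has a bounded number of chord-partners, so $\Delta(G[V(W)])$ is bounded, as required.

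The step I expect to be the main obstacle is precisely this last one: showing that a wall vertex with many chord-partners always yields one of $K_t$, $K_{t,t}$, a prism, a pyramid, or a hole with a vertex having two non-adjacent neighbours in it. The difficulty is twofold. First, $W$ need not be an induced subgraph of $G$, so to obtain a genuine \emph{hole} of $G$ one must navigate the chords of $G[V(W)]$; this requires not only the minimality of $W$ but probably also that each branch-path be a shortest path of $G[V(W)]$ between its ends, together with occasional passage to a subwall. Second, the position analysis of the chord-partners of $v$ inside $W$ must be carried out in enough cases to cover all degeneracies, and it is exactly in the triangle and degenerate cases that the prism and pyramid exclusions of $\mathcal{T}_2$ --- rather than merely the ``two non-adjacent neighbours in a hole'' exclusion --- are needed. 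Once the degree bound is in hand, the remainder is the short deduction via Theorem~\ref{boundeddeg} described above.
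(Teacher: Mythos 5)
Your outer reduction (restrict to $G[V(W)]$ for a wall subgraph $W$, bound its maximum degree, then apply Theorem~\ref{boundeddeg} to this induced subgraph) is sound in shape --- indeed the paper uses exactly this shell inside Theorem~\ref{smalldensecomps} --- but the step that carries all the weight, namely that a wall vertex with many (or even two non-adjacent) chord-partners forces $K_t$, $K_{t,t}$, a prism, a pyramid, or a member of $\mathcal{T}_2$, is asserted rather than proved, and the sketch you give for it does not go through. To turn two non-adjacent chord-partners $y_1,y_2$ of $v$ into a $\mathcal{T}_2$ graph you need an induced cycle of $G$ through $y_1,y_2$ avoiding $v$; ``rich connectivity of the wall'' only supplies paths of the subgraph $W$, not induced paths of $G$, and making such a cycle induced requires controlling precisely the chords of $G[V(W)]$ whose abundance you are trying to bound --- the argument is circular. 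Shortest-path tricks give you one induced path, but closing it into a hole needs a second induced path with no edges to the first, and nothing in the $\mathcal{T}_2$-exclusions or in minimality of $W$ provides that; moreover the required connectivity can simply fail when $G$ has clique or star cutsets, a case you never address (the paper spends Lemma~\ref{startoclique}, Lemma~\ref{cliqueortrianglefree} and the hypothesis ``no star cutset'' on exactly this point before any hole-building is attempted). Note also that, as stated, your claim would show that essentially no wall vertex has even two non-adjacent chord-partners, so the ``degenerate positions'' you defer to a prism/pyramid analysis are not a residual case: they are the entire content of the theorem, and no argument is offered for them. The fallback ``so many chords that $G[V(W)]$ contains $K_t$ or $K_{t,t}$'' also does not follow from any density consideration, since excluding $K_t$ and induced $K_{t,t}$ does not bound local degree.

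For comparison, the paper never tries to control adjacencies inside a wall. It reduces to graphs with no clique or star cutset (Lemmas~\ref{startoclique} and~\ref{diamondlemma}), then to triangle-free sparse graphs (Lemma~\ref{cliqueortrianglefree}), shows every heavy seagull extends to a theta and is broken by an active two-clique-separation (Lemma~\ref{norestricted}, Theorem~\ref{breakextheavyseagull}), proves these separations are pairwise loosely non-crossing (Theorem~\ref{noncrossing2cliques}), and then runs the central bag method with marker paths and inherited weights (Theorem~\ref{unbalancedbeta}) to transfer balanced separators from a heavy-seagull-free central bag back to $G$; only at that point does the bounded-degree machinery (Theorems~\ref{smalldensecomps} and~\ref{nobarbell}) enter. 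If you want to pursue your route, the missing lemma about chord-partners would have to be proved with comparable global tools, not extracted locally from the wall.
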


Let us next restate Theorem \ref{helpful_main2} more explicitly.
A graph $G$ is {\em sparse} if 
for every hole  $H$ of $G$ and vertex
$v \not \in H$, there is an
 edge $ab$ of $H$ such that $N(v) \cap H \subseteq \{a,b\}$.
 A graph is {\em very sparse} if it is sparse and also
 (pyramid, prism)-free (thus a graph is very sparse if and only if it is $\mathcal{T}_2$-free).
 It follows that if $G$ is very sparse,
then $G$ does not contain $K_{3,3}$ or the line graph of a subdivision of
$W_{3 \times 3}$. Let $\mathcal{F}$ be the family of all very sparse graphs, and let
$\mathcal{F}_t$ be the family of all very sparse graphs with no clique of size at least $t+1$.

We prove:
\begin{theorem}
  \label{sparsethm}
  For all $t> 0$, there exists $c = c(t)$ such that every graph in
  $\mathcal{F}_t$ with 
  treewidth more than $c$ contains a subdivision of $W_{t\times t}$ (as an
  induced subgraph).
\end{theorem}

The rough outline of the proof of Theorem \ref{sparsethm} is as follows. Our first step is to  show that if a graph in $\mathcal{F}_t$ contains a triangle, then it admits a clique cutset. Thus it is enough to prove the result for graphs in $\mathcal{F}_2$.
 Now let $G \in \mathcal{F}_2$.
A {\em heavy seagull} in $G$  is an induced three-vertex path both of whose ends have degree at least three in $G$.
First we prove that every heavy seagull of $G$ is ``broken'' by a two-clique-separation (this means  that for every heavy seagull $H$ of $G$, there exist two cliques $K_1,K_2 \in G$ such that no component of
$G \setminus (K_1 \cup K_2)$ contains $H$).
Now the idea is to use the central bag method, developed in earlier papers in this series \cite{wallpaper,logpaper,pyramiddiamond,evenholetw}, to identify an induced subgraph $\beta $ of $G$ that contains no heavy seagull, and such that the treewidth of $G$ is not much larger than the treewidth of $\beta$. The key difference between our situation here and those in the earlier papers
is that the cutsets we use to break the heavy seagulls are not connected, a property that was crucial in the earlier proofs. To deal with this difficulty, we change the definition of a central bag, including in it a path
between the two cliques of the cutset  whose interior is in  $G \setminus \beta$ (this is in the spirit of, but different from,  ``marker paths'' for $2$-joins).  We then modify the previously known central bag tools to work in this new setting. By ``breaking'' heavy seagulls, we arrange that in $\beta$, vertices of degree at least three appear in components of bounded size. This in turn allows us to bound  the treewidth of $\beta$,  and theorem follows.

\subsection{Definitions and notation}
Let $G$ be a graph.
For $X \subseteq V(G)$, we denote by $G[X]$ the induced subgraph of $G$ with
vertex set $X$, and $G \setminus X$ denotes  $G[V(G) \setminus X]$.
In this paper we use the set  $X$ and the subgraph $G[X]$ of $G$
interchangeably. If $F$ is a graph and $G[X]$ is isomorphic to $F$, we say that
{\em $X$ is  an $F$ in $G$}.
Let $v \in V(G)$. The \emph{open neighborhood of $v$}, denoted $N(v)$, is the set of all vertices in $V(G)$ adjacent to $v$. We denote the degree of $v$ in $G$ by $\deg_G(v)=|N(v)|$. The \emph{closed neighborhood of $v$}, denoted $N[v]$, is $N(v) \cup \{v\}$. Let $X \subseteq V(G)$. The \emph{open neighborhood of $X$}, denoted $N(X)$, is the set of all vertices in $V(G) \setminus X$ with a neighbor in $X$. The \emph{closed neighborhood of $X$}, denoted $N[X]$, is $N(X) \cup X$. If $H$ is an induced subgraph of $G$ and
$X \subseteq V(G)$, then $N_H(X)= N(X) \cap H$.
Let $Y \subseteq V(G)$ be disjoint from $X$. Then, $X$ is {\em complete} to $Y$
if every vertex of $X$ is adjacent to every vertex of $Y$, and 
$X$ is \emph{anticomplete} to $Y$ if there are no edges between $X$ and $Y$. We use $X \cup v$ to mean $X \cup \{v\}$, and $X \setminus v$ to mean $X \setminus \{v\}$.
 
Given a graph $G$, a {\em path in $G$} is an induced subgraph of $G$ that is a path. If $P$ is a path in $G$, we write $P = p_1 \dd \hdots \dd p_k$ to mean that $p_i$ is adjacent to $p_j$ if and only if $|i-j| = 1$. We call the vertices $p_1$ and $p_k$ the \emph{ends of $P$}, and say that $P$ is \emph{from $p_1$ to $p_k$}. The \emph{interior of $P$}, denoted by $P^*$, is the set $P \setminus \{p_1, p_k\}$. The \emph{length} of a path $P$ is the number of edges in $P$.

A {\em theta} is a graph $T$ containing two vertices $a, b$ and three paths $P_1, P_2, P_3$ from $a$ to $b$ of length at least two, such that $P_1 \setminus \{a, b\}, P_2 \setminus \{a, b\}, P_3 \setminus \{a, b\}$ are  pairwise disjoint and anticomplete to each other. We call $a,b$ the {\em ends} of $T$.

\subsection{Organization of the paper}
This paper is organized as follows. In Section~\ref{sec:Separations}, we give general background and definitions related to separations in graphs; we also discuss connections between different kinds of separations in the special case of sparse graphs. In Section~\ref{sec:reduction}, we reduce Theorem \ref{sparsethm} to the case of triangle-free sparse graphs.  In Section~\ref{sec:centralbags}, we discuss balanced separators in graphs, and develop our main tool, Theorem \ref{unbalancedbeta}, which allows us to use the
central bag method. In Section~\ref{sec:2clique}, we prove results about two-clique-separations, which are the cutsets that will be used to form the central bag. In Section~\ref{sec:heavyseagulls}, we prove structural results that allow us to break every heavy seagull in a triangle-free sparse graph and produce a central bag that contains no heavy seagulls.
In Section~\ref{sec:trianglefree}, we use the tools of Section~\ref{sec:centralbags} to prove our main result for graphs in $\mathcal{F}_2$. Finally, in Section~\ref{sec:theend}, we prove Theorem \ref{sparsethm}.

\section{Separations}
\label{sec:Separations} 
A {\em separation} of a graph $G$ is a triple $(A, C, B)$, where $A, B, C \subseteq V(G)$, $A \cup C \cup B = V(G)$, $A$, $B$, and $C$ are pairwise disjoint, and $A$ is anticomplete to $B$. If $S = (A, C, B)$ is a separation, we let $A(S) = A$, $B(S) = B$, and $C(S) = C$. 
We say that $C \subseteq V(G)$ is a {\em cutset} of $G$ if there
exists a separation $(A,C,B)$ of $G$ with $A \neq \emptyset$ and $B \neq \emptyset$.
A {\em clique} in a graph is a (possibly empty) set of pairwise adjacent vertices.  We say that $G$ admits a {\em clique cutset} if there is a cutset of $G$
that is a clique (in particular every disconnected graph admits a clique cutset).
A separation $(A, C, B)$ is a {\em star separation} if there exists $v \in C$ such that $C \subseteq N[v]$ (we say that $v$ is a {\em center} of $C$). A star separation $(A,C,B)$ is {\em proper} if
$A \neq \emptyset$ and $B \neq \emptyset$.
We say that $G$ admits a {\em star cutset}
if there is a proper star separation in $G$.

First we observe:

\begin{lemma}
  \label{startoclique}
  Let $G$ be a sparse graph and  $(A,C,B)$ be a separation of $G$
  with $A \neq \emptyset$ and $B \neq \emptyset$. Suppose that there
  exist $v_1, \dots, v_k \in C$ such that $C \subseteq \bigcup_{i=1}^k N[v_i]$.
  Let $D_1$ be a component of $A$ and let $D_2$ be a component of $B$.
  Then there exist cliques $X_1, \dots, X_k \subseteq C$ of $G$ such that every path
  from a
  vertex of $D_1$ to a vertex of $D_2$ meets $\bigcup_{i=1}^k X_i$. In particular,
  if $G$ admits a star cutset, then $G$ admits a clique cutset.
\end{lemma}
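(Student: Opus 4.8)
The plan is to prove something slightly more precise: that there is a set $S\subseteq C$ with $D_1$ and $D_2$ in distinct components of $G\setminus S$ and with $S\cap N[v_i]$ a clique for every $i$. Given such an $S$, put $X_i:=S\cap N[v_i]$; these are cliques contained in $C$, and since $S\subseteq C\subseteq\bigcup_i N[v_i]$ we have $\bigcup_i X_i=S$, which (being disjoint from the connected sets $D_1,D_2$) meets every path between them. For the final sentence, a proper star separation $(A,C,B)$ with center $v\in C$ is exactly the case $k=1$, $v_1=v$; choosing $D_1,D_2$ to be components of the nonempty sets $A,B$, we get a single clique $X_1\subseteq C$ that separates $D_1$ from $D_2$, hence a clique cutset.

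The engine is a hole argument. Take $S\subseteq C$ an inclusion-minimal $D_1$--$D_2$ separator and let $\hat A,\hat B$ be the components of $G\setminus S$ containing $D_1,D_2$; by minimality every vertex of $S$ has a neighbor in $\hat A$ and a neighbor in $\hat B$. Suppose $S\cap N[v_i]$ is not a clique. Since $v_i$ is adjacent to every vertex of $N[v_i]\setminus\{v_i\}$, there are $a,b\in S\cap N(v_i)$ that are non-adjacent. Let $Q_A$ be a shortest $a$--$b$ path in $G[\hat A\cup\{a,b\}]$ and $Q_B$ a shortest $a$--$b$ path in $G[\hat B\cup\{a,b\}]$; each exists and has length at least two (as $a,b$ are non-adjacent), each is induced, and their interiors are anticomplete (they lie in different components of $G\setminus S$), so $Q_A\cup Q_B$ is a hole $H$ of length at least four. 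If $v_i\notin\hat A\cup\hat B$ — in particular whenever $v_i\in S$ — then $v_i\notin H$, while $a,b$ are two non-adjacent neighbors of $v_i$ in $H$, contradicting that $G$ is sparse. So a non-adjacent pair $a,b$ in $S\cap N(v_i)$ forces $v_i\in\hat A\cup\hat B$, say $v_i\in\hat A$; and running the same argument with a shortest $a$--$b$ path in $G[(\hat A\cup\{a,b\})\setminus v_i]$ (which is induced if it exists) in place of $Q_A$ shows that no $a$--$b$ path avoiding $v_i$ exists there, i.e.\ $v_i$ separates $a$ from $b$ in $G[\hat A\cup\{a,b\}]$.

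It remains to rule out this last configuration. Let $A_a,A_b$ be the components of $a,b$ in $G[\hat A\cup\{a,b\}]-v_i$. If the component of this graph containing $D_1$ is neither $A_a$ nor $A_b$, then $(S\setminus\{a,b\})\cup\{v_i\}$ is still a $D_1$--$D_2$ separator of strictly smaller size, so taking $S$ of minimum size we may assume $D_1\subseteq A_a$. I would then check that $S':=(S\setminus\{b\})\cup\{v_i\}$ is again a minimum-size (hence minimal) $D_1$--$D_2$ separator: a $D_1$--$D_2$ path avoiding $S'$ would have to reach $b$ from $D_1$ through $G[(\hat A\setminus v_i)\cup\{a,b\}]$ while staying in $A_a$, which is impossible since $b\in A_b\ne A_a$. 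Now $v_i\in S'$, so by the engine the part $S'\cap N[v_i]$ is a clique; thus this move "fixes" $v_i$. The genuine difficulty — and the step I expect to be the main obstacle — is termination: adding $v_i$ to the separator can spoil the part $S'\cap N[v_j]$ for a neighbor $v_j$ of $v_i$, and repairing $v_j$ can in turn eject $v_i$ again, so one must organise the repairs so no infinite cascade occurs. I would attack this by choosing the initial minimal separator extremally (minimum size, and then, say, making the quantities $|\hat A|+|\hat B|$ or the number of components of $G\setminus S$ as small as possible, or minimizing the total number of non-adjacent pairs across all $S\cap N[v_i]$), and showing that each repair strictly improves this measure while never un-fixing an already-cliqued part — the bookkeeping of how a repair move changes the component structure of $G\setminus S$ is where the care is needed.
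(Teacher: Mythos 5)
Your ``engine'' is sound exactly when the center $v_i$ under consideration lies in the separator (or at least outside both side components $\hat A,\hat B$): then the hole glued from an $a$--$b$ path through $\hat A$ and one through $\hat B$ avoids $v_i$ and sparseness gives the contradiction. But the proof is not complete. The case $v_i\in\hat A$ with $v_i$ separating $a$ from $b$ inside $G[\hat A\cup\{a,b\}]$ is where the argument stops, and your proposed remedy --- swap $b$ out of $S$, put $v_i$ in, and iterate --- is only a plan. You say yourself that you have no termination argument, and the worry is genuine: a later repair for some $v_j$ deletes one vertex of a non-adjacent pair in $S\cap N[v_j]$, and that deleted vertex may perfectly well be $v_i$ (e.g.\ when $v_i\in N[v_j]$), so an already ``fixed'' center can be ejected again and none of the extremal measures you list is shown to decrease. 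Since the whole lemma rests on this step, this is a real gap, not bookkeeping left to the reader.

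The paper avoids the problem by not insisting on a minimal (or minimum-size) separator: it builds the cutset so that all the centers belong to it from the start. Concretely, with $N_1=N(D_1)$, let $D_2'$ be the component of $G\setminus(N_1\cup\{v_1,\dots,v_k\})$ containing $D_2$, and take $X=N(D_2')\cup\{v_1,\dots,v_k\}$, with cliques of the form $X\cap N[v_i]$ (equivalently, one may take $X_i=\{v_i\}\cup\bigl(N(D_1)\cap N(D_2')\cap N(v_i)\bigr)$, whose union still meets every $D_1$--$D_2$ path). If $x,y$ in such a set are non-adjacent, then $x,y\neq v_i$, both have neighbors in $D_1$ and in $D_2'$, and the hole formed by a path through $D_1$ and a path through $D_2'$ cannot contain $v_i$, precisely because $v_i$ sits in the cutset rather than in a side; sparseness then applies directly. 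No minimality, no repair process, no termination issue. Note also that you are aiming for a stronger statement than needed: the lemma only asks for $k$ cliques in $C$ whose union meets every $D_1$--$D_2$ path, not for a single separator $S$ with every $S\cap N[v_i]$ a clique, and giving up that extra strength is exactly what makes the short argument possible.
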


\begin{proof}
  Let $N_1=N(D_1) \subseteq C$, and let $D_2'$ be the component of $G \setminus (N_1 \cup \{v_1, \dots, v_k\})$ such that $D_2 \subseteq D_2'$. Let $X=N(D_2') \cup \{v_1, \dots, v_k\}$. Then
  $X \subseteq N_1 \cup \{v_1, \dots, v_k\} \subseteq C$, and every path from a vertex of $D_1$ to a vertex of $D_2'$ in $G$ meets $X$. We claim that for every
  $i \in \{1, \dots, k\}$ the set $X \cap N[v_i]$ is a clique.
    Suppose not, and 
  let $x,y \in X \cap N[v_1]$ (say) be non-adjacent (and so in particular, $x, y \neq v_1$). It follows that $x, y \in N(D_1) \cap N(D_2')$. Let $P_1$ be a path from $x$ to $y$
  with $P_1^* \subseteq D_1$ and let  $P_2$ be a path from $x$ to $y$
  with $P_2^* \subseteq D_2'$.
  Then $H=x \dd P_1 \dd y \dd P_2 \dd x$ is a hole and $v_1 \not \in H$ since $v_1 \in X$.
  But now $v_1$ has two non-adjacent neighbors in $H$, contrary to the fact that $G$ is sparse.
  This proves Lemma \ref{startoclique}.
  \end{proof}

 Lemma 7 from \cite{cliquetw} shows that clique cutsets do not affect treewidth. Now, by Lemma \ref{startoclique}, it follows that in order to prove Theorem \ref{sparsethm} it is enough to prove the following:

\begin{theorem}
  \label{sparsethmnostar}
  For all $t> 0$, there exists $c = c(t)$ such that every
graph in $\mathcal{F}_t$ with 
   treewidth more than $c$ and with no star cutset
contains a subdivision of $W_{t\times t}$ as an induced subgraph.
\end{theorem}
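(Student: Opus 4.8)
The plan is to prove Theorem~\ref{sparsethmnostar} in three stages: reduce to the triangle-free case, break all ``heavy seagulls'' by small cutsets, and then run the central bag method to produce the wall subdivision. For the first stage, note that a nonempty clique $K$ satisfies $K\subseteq N[v]$ for every $v\in K$, so every nonempty clique cutset is a star cutset, and hence a graph with no star cutset has no nonempty clique cutset. We may also assume $G$ is connected (otherwise apply the theorem to each component: treewidth is the maximum over components and a wall subdivision is connected). I would then prove the auxiliary fact flagged in the outline: every graph in $\mathcal{F}_t$ containing a triangle admits a nonempty clique cutset. The idea is that, for a triangle $\{x,y,z\}$ in a very sparse graph, the holes and forbidden paths dictated by sparseness, together with the absence of prisms and pyramids, force one of the few small sets around the triangle (such as a common neighbourhood of two of its vertices, possibly with part of the triangle adjoined) to be a cutset, and that cutset is a clique. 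Granting this, $G$ is triangle-free, so $G\in\mathcal{F}_2$; and since $G$ is triangle-free and sparse, every clique of $G$ has at most two vertices, a fact used throughout.

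\emph{Breaking heavy seagulls.} Call an induced three-vertex path $a\dd b\dd c$ of $G$ a \emph{heavy seagull} if $\deg_G(a)\ge 3$ and $\deg_G(c)\ge 3$. The core structural lemma is: for every heavy seagull $H$ of $G$ there exist cliques $K_1,K_2$ of $G$ (each a vertex or an edge, by the previous paragraph) such that no component of $G\setminus(K_1\cup K_2)$ contains all of $V(H)$. The proof should be a sparseness-driven analysis: from the high-degree endpoints $a$ and $c$ one extracts, using triangle-freeness and the fact that a vertex outside a hole has at most two neighbours in it which are then adjacent, strong constraints on how $b$ and the rest of $G$ can connect the neighbourhoods of $a$ and $c$, and these constraints pin down the separating pair of cliques. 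I expect this step to need a genuine, if bounded, case analysis.

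\emph{The central bag and the wall.} Now let $G\in\mathcal{F}_2$ be connected with no star cutset and with large treewidth. Feeding the family of two-clique-separations that break all heavy seagulls into the central bag tool (Theorem~\ref{unbalancedbeta}), and using the no-star-cutset hypothesis to keep the relevant balanced separators under control, I would extract an induced subgraph $\beta$ of $G$ with no heavy seagull and with $\tw(G)$ bounded above by a function of $\tw(\beta)$, so that $\tw(\beta)$ is still large. The new difficulty, not present in the earlier papers of the series, is that each cutset $K_1\cup K_2$ is disconnected; to compensate, the central bag must be enlarged to contain, for every separation used, a path joining $K_1$ to $K_2$ whose interior lies in $G\setminus\beta$ (in the spirit of, but different from, marker paths for $2$-joins), and the usual central bag estimates must be re-derived in this augmented setting. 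Because all heavy seagulls of $G$ have been broken, inside $\beta$ the vertices of degree at least three fall into components of size bounded in terms of $t$. Finally I would invoke the theorem of \cite{TWV}: a graph of bounded clique number whose vertices of degree at least three lie in bounded-size components and which has sufficiently large treewidth contains a subdivision of $W_{t\times t}$ as an induced subgraph. Applying it to $\beta$ gives such a subdivision in $\beta$, hence in $G$, and tracking the functions backwards produces $c=c(t)$.

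The hardest part, I expect, is making the central bag machinery work with disconnected cutsets: one must check both that the augmented bag $\beta$ genuinely inherits large treewidth from $G$, and that inserting the marker-type paths neither destroys the sparseness of $\beta$ nor creates new heavy seagulls that would spoil the bounded-component conclusion. The seagull-breaking lemma is also delicate, but it is a finite structural argument, and the last step — from a localized high-degree part to a wall subdivision — is black-boxed from \cite{TWV}.
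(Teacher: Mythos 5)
Your plan reproduces the paper's high-level outline, but it omits the ingredient that makes the central-bag step legitimate: the separations fed into Theorem \ref{unbalancedbeta} must be pairwise loosely non-crossing (and must satisfy the bound $|\delta_{\mathcal{S}}(v)|\leq \Delta$), and neither of these holds for an arbitrary family of seagull-breaking two-clique-separations. This is precisely why the paper introduces \emph{canonical} two-clique-separations (defined relative to the unbalanced weight function $w$) and \emph{active} separations: Theorem \ref{breakextheavyseagull} breaks each heavy seagull by an active canonical separation, and Theorem \ref{noncrossing2cliques} --- a substantial argument using $8$-unbalancedness, sparseness, and the absence of star cutsets --- shows that any two active separations are loosely non-crossing. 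Only after that is the central bag $\beta$ even well-defined, is Theorem \ref{unbalancedbeta} applicable, and does the verification $|\delta_{\mathcal{S}}(v)|\leq 2$ in the proof of Theorem \ref{t=2} go through (it again invokes the non-crossing statement). Your proposal says only ``feed the family of two-clique-separations that break all heavy seagulls into the central bag tool,'' which does not work as stated; identifying and proving the non-crossing property is a genuinely new step, not a routine re-derivation of the earlier central-bag estimates, and it also forces the seagull-breaking lemma to produce separations of the special (canonical, active) kind rather than arbitrary separating pairs of cliques.

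Two further points. First, the seagull-breaking step is not the ``bounded case analysis'' you anticipate: the paper's route is to show every heavy seagull extends to a theta (Lemma \ref{norestricted}, which uses the no-star-cutset hypothesis essentially), then to use Lemma \ref{nopaththeta} together with Lemma \ref{startoclique} to turn the star-shaped separator $N[b']\cup N[v_1]$ into a union of two cliques, and finally to upgrade this to an active canonical separation via the weight function; none of this mechanism appears in your sketch. Second, the black box you invoke at the end is false as you state it: the line graph of a heavily subdivided large wall has clique number $3$, its vertices of degree at least three lie in disjoint triangles, and its treewidth is large, yet it contains no induced subdivision of a large wall. The correct statement (Theorems \ref{smalldensecomps} and \ref{nobarbell}) must either allow the line-graph outcome or assume membership in $\mathcal{F}_t$; this is harmless in context because $\beta$ is an induced subgraph of $G$ and hence lies in $\mathcal{F}_2$, but it has to be said. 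Similarly, your stage-one claim (``every graph in $\mathcal{F}_t$ containing a triangle admits a clique cutset'') needs the exception of complete graphs, as in Lemma \ref{cliqueortrianglefree}; these are easily dismissed since their treewidth is at most $t$.
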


\section{Reducing to the triangle-free case}
\label{sec:reduction}
In this section we show how to deduce  Theorem \ref{sparsethm} from the special case
of triangle-free graphs. A {\em diamond} is the graph obtained from $K_4$ by removing an
edge.

\begin{lemma}
  \label{diamondlemma}
  Let $G$ be a sparse graph  and assume that $G$ does not admit a star cutset.
  Then $G$ is diamond-free. 
\end{lemma}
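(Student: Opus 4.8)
The plan is to argue by contradiction: suppose $G$ is sparse, does not admit a star cutset, but contains a diamond $D$. Write the diamond as $\{a, b, c, d\}$ where $a$ and $c$ are the two vertices of degree $3$ in $D$ (so $ac \in E(G)$), and $b, d$ are the two vertices of degree $2$ in $D$, with $b$ and $d$ nonadjacent; thus $\{a, b, c\}$ and $\{a, d, c\}$ are the two triangles sharing the edge $ac$. My aim is to produce a proper star separation of $G$, contradicting the hypothesis. The natural candidate for the center is one of $a$ or $c$; more precisely, I expect to show that $\{a, c\} \cup (N(b) \cap N(d))$ — or some closely related set contained in $N[a]$ or $N[c]$ — is a star cutset separating $b$ from $d$.

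The key step is to understand what happens between $b$ and $d$ once we delete $\{a,c\}$ together with their common neighborhood. First I would consider the component $B$ of $G \setminus \{a, c\}$ containing $b$; if $d \notin B$, then $\{a, c\}$ is already a (clique, hence star) cutset separating $b$ from $d$, and we are done. So assume $b$ and $d$ lie in the same component of $G \setminus \{a, c\}$, and let $P$ be a shortest path from $b$ to $d$ in $G \setminus \{a, c\}$. Then $b \dd P \dd d \dd a \dd b$ and $b \dd P \dd d \dd c \dd b$ are both holes unless $P$ is too short or has chords; I would carefully handle the short cases ($P$ of length $1$ is impossible since $bd \notin E(G)$; $P$ of length $2$, say with middle vertex $x$, needs separate treatment). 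In the generic case, $C' := b \dd P \dd d \dd a \dd b$ is a hole not containing $c$, and $c$ has the two neighbors $a, b$ on it — these are adjacent, so this alone does not contradict sparseness — but $c$ can have \emph{no other} neighbor on $C'$: any neighbor of $c$ in $P^* \cup \{d\}$ would, together with $a$ (nonadjacent to vertices of $P^*$) or with $b$, give $c$ two nonadjacent neighbors in a hole. Symmetrically for $a$ and the hole through $c$. Exploiting this, I expect to show that $a$ and $c$ have exactly the same neighbors among the vertices ``between'' $b$ and $d$, and then that $N[a] \cap N[c]$ (or just $\{a,c\}$ plus common neighbors on both sides) separates $b$ from $d$; since this set lies in $N[a]$, it is a star cutset, a contradiction. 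The in-particular consequence — $G$ triangle-free — then follows because a triangle that is not part of a diamond behaves like a clique cutset obstruction, or more directly: by Lemma \ref{startoclique} a star cutset yields a clique cutset, but we will ultimately only need diamond-freeness plus the later reduction, so I would be content to state diamond-freeness here.

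The main obstacle, I anticipate, is the bookkeeping around short paths and around chords of the tentative holes $C'$: one must rule out, or absorb into the clique cutset, every vertex that could be adjacent to both $a$ and $c$, and verify that after removing the star $N[a] \supseteq \{a, c\} \cup (N(a) \cap N(c))$ the vertices $b$ and $d$ genuinely fall into different components. Sparseness is exactly the right tool — it forbids a vertex outside a hole from having two nonadjacent neighbors on it — so each case reduces to exhibiting an appropriate hole avoiding the relevant vertex; the care needed is in choosing that hole (using shortest paths to control chords) and in the degenerate configurations where the path $b \dd P \dd d$ has length $\le 2$.
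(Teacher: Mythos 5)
Your strategy is genuinely different from the paper's, and it is the harder way around. The paper uses the no-star-cutset hypothesis \emph{positively}, right at the start: in your labelling ($a,c$ of degree three and adjacent, $b,d$ of degree two and non-adjacent), since $a$ is not the center of a star cutset, the set $N[a]\setminus\{b,d\}$ (which contains $a$ and $c$) does not separate $b$ from $d$, so there is an induced path $P$ from $b$ to $d$ whose interior avoids $N[a]$ entirely. Then $b \dd P \dd d \dd a \dd b$ is a hole (of length at least four, since $bd\notin E(G)$), it contains neither $c$ nor any neighbour of $c$ forced by construction, and $c$ has the two non-adjacent neighbours $b$ and $d$ in it -- one application of sparseness and no case analysis. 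You instead invoke the hypothesis only at the very end, and try to exhibit a star cutset explicitly, which is exactly what forces you into the bookkeeping you flag as the main obstacle.

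As submitted, that bookkeeping is a genuine gap: the two claims carrying your whole argument -- that $a$ and $c$ have the same neighbours on a $b$--$d$ path, and that $\{a,c\}$ together with $N(a)\cap N(c)$ separates $b$ from $d$ -- are only ``expected,'' not proved. Moreover, your intermediate assertion that $C'=b \dd P \dd d \dd a \dd b$ is a hole ``in the generic case'' is wrong as stated: a shortest path in $G\setminus\{a,c\}$ is induced, so chords are not the issue; the issue is that $a$ (or $c$) may have neighbours in $P^*$, which destroys the hole. There is also a slip in the candidate cutset ($N(b)\cap N(d)$ should be $N(a)\cap N(c)$), and note that $b,d$ themselves lie in $N(a)\cap N(c)$, so they must be removed from the cutset before you can speak of separating them. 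The route can be completed: for consecutive neighbours $q_i,q_j$ of $a$ on an induced $b$--$d$ path with $j\geq i+2$, the hole $a \dd q_i \dd \cdots \dd q_j \dd a$ avoids $c$, and since $c$ is adjacent to $a$, sparseness forces $N(c)$ to miss $q_{i+1},\dots,q_{j-1}$; hence every neighbour of $c$ in $P^*$ is a neighbour of $a$, and by symmetry $N(a)\cap P^*=N(c)\cap P^*$, so either $P^*$ meets $N(a)\cap N(c)$ or $P^*$ avoids $N[a]$ and you are back to the paper's hole. So your plan is viable, but the central separation claim needed an argument like this, and the paper's single-hole proof is substantially shorter.
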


\begin{proof}
  Suppose first $\{a,b,c,d\}$ is a diamond in $G$. We may assume that
  the pair $ac$ is non-adjacent.
 Since $b$ is not the center of a star cutset in $G$, it follows that 
  there exists is a path from $a$ to $c$ with no neighbor of
  $b$ in its interior. Let $P$ be such a path. Then $d$ is not a vertex of $P$, since $d$ is adjacent to $b$. Moreover, $a \dd P \dd c \dd b \dd a$ is a hole, and $d$ has three neighbors in it, namely $a$, $b$  and $c$,
  a contradiction. This proves that $G$ is diamond-free. 
\end{proof}

We also need the following folklore result that appeared in \cite{prismfree}:

\begin{lemma}\label{minimalconnected}
Let $x_1, x_2, x_3$ be three distinct vertices of a graph $G$. Assume that $H$ is a connected induced subgraph of $G \setminus \{x_1, x_2, x_3\}$ such that $V(H)$ contains at least one neighbor of each of $x_1$, $x_2$, $x_3$, and that $V(H)$ is minimal subject to inclusion. Then, one of the following holds:
\begin{enumerate}[(i)]
\item For some distinct $i,j,k \in  \{1,2,3\}$, there exists $P$ that is either a path from $x_i$ to $x_j$ or a hole containing the edge $x_ix_j$ such that
\begin{itemize}
\item $V(H) = V(P) \setminus \{x_i,x_j\}$, and
\item either $x_k$ has two non-adjacent neighbors in $H$ or $x_k$ has exactly two neighbors in $H$ and its neighbors in $H$ are adjacent.
\end{itemize}

\item There exists a vertex $a \in V(H)$ and three paths $P_1, P_2, P_3$, where $P_i$ is from $a$ to $x_i$, such that 
\begin{itemize}
\item $V(H) = (V(P_1) \cup V(P_2) \cup V(P_3)) \setminus \{x_1, x_2, x_3\}$, and 
\item the sets $V(P_1) \setminus \{a\}$, $V(P_2) \setminus \{a\}$ and $V(P_3) \setminus \{a\}$ are pairwise disjoint, and
\item for distinct $i,j \in \{1,2,3\}$, there are no edges between $V(P_i) \setminus \{a\}$ and $V(P_j) \setminus \{a\}$, except possibly $x_ix_j$.
\end{itemize}

\item There exists a triangle $a_1a_2a_3$ in $H$ and three paths $P_1, P_2, P_3$, where $P_i$ is from $a_i$ to $x_i$, such that
\begin{itemize}
\item $V(H) = (V(P_1) \cup V(P_2) \cup V(P_3)) \setminus \{x_1, x_2, x_3\} $, and 
\item the sets $V(P_1)$, $V(P_2)$ and $V(P_3)$ are pairwise disjoint, and
\item for distinct $i,j \in \{1,2,3\}$, there are no edges between $V(P_i)$ and $V(P_j)$, except $a_ia_j$ and possibly $x_ix_j$.
\end{itemize}
\end{enumerate}
\end{lemma}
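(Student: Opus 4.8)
The plan is to extract everything from the minimality of $V(H)$. First observe that if $v\in V(H)$ is a non-cut-vertex of $H$ (that is, $H\setminus v$ is connected), then by minimality $H\setminus v$ must fail to meet $N(x_i)$ for some $i$, so $v$ is the \emph{unique} neighbour of $x_i$ in $H$; hence distinct non-cut-vertices of $H$ correspond to distinct indices, and so $H$ has at most three non-cut-vertices. I would then invoke the standard fact that a connected graph with at most three non-cut-vertices is ``tree-like'': in its block-cut tree every leaf block contains a non-cut-vertex, and any $2$-connected block with at least four vertices — or the presence of two $2$-connected blocks that are not single edges — would let one shorten $H$ between two of its cut-vertices while still meeting all three $N(x_i)$, contradicting minimality; so $H$ has at most one block that is not a single edge, and if that block exists it is a triangle. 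Consequently $H$ is an induced path, the union of three induced paths pairwise meeting only at a common vertex (a ``$Y$''), or the union of a triangle with three pairwise vertex-disjoint induced paths attached one at each of its vertices (legs allowed to be trivial).

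Next I would split according to whether some vertex of $H$ is adjacent to two of $x_1,x_2,x_3$. \emph{Case 2 (no such vertex).} Pick $a_i\in N(x_i)\cap V(H)$; these are distinct, and $V(H)$ being minimal forces $H$ to be a minimal connected induced subgraph containing $\{a_1,a_2,a_3\}$, so in the trichotomy above the three non-cut-vertices are exactly $a_1,a_2,a_3$. Removing the leaf $a_i$ and using minimality shows that each such $x_i$ has $a_i$ as its only neighbour in $H$ (in the path case this applies to the two endpoint anchors, the remaining anchor being taken as the vertex from which three paths emanate). Since $H$ is induced and has no edges between distinct legs except along the triangle, appending the edges $x_ia_i$ yields conclusion (ii) from a $Y$ or a path and (iii) from the triangle-with-legs; the only extra point is that in the path case, if the middle anchor's vertex $x_k$ happens to have two neighbours in $H$, one instead reads off the path or hole through all of $H$ and gets (i). \emph{Case 1 (some $h\in V(H)$ is adjacent to, say, $x_1$ and $x_2$).} Here $\{h\}$ already meets $N(x_1)$ and $N(x_2)$, so $V(H)$ is minimal subject to being connected, containing $h$, and meeting $N(x_3)$; such a set is $h$ together with a shortest path from $h$ to $N(x_3)$, so $H$ is an induced path $h=h_0\dd h_1\dd\cdots\dd h_m$ whose unique vertex in $N(x_3)$ is the far end $h_m$. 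A short argument with minimality now shows (after possibly swapping $x_1$ and $x_2$) that $x_2$'s only neighbour in $H$ is $h_0$, so $P=x_2\dd h_0\dd\cdots\dd h_m\dd x_3$ is an induced path, or, if $x_2\sim x_3$, a hole; either way $V(H)=V(P)\setminus\{x_2,x_3\}$, and the neighbours of $x_1$ in $H$ (which include $h_0$ and, unless everything collapses to a single vertex giving (ii), at least one more) satisfy the neighbourhood condition of (i) on the third vertex.

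The main obstacle is Case 1: the vertex $h$ of $H$ that sees two of the $x_i$ does not become an endpoint of the witnessing path $P$ but rather an \emph{interior} vertex of it, with $P$ running between the \emph{other} two of the $x_i$, so one must correctly assign the roles of $i,j,k$ and then verify the somewhat fussy alternative ``$x_k$ has two non-adjacent neighbours in $H$, or exactly two adjacent ones'' — peeling redundant vertices off the path via minimality whenever it threatens to fail, and noting that when $x_k$ has only a single neighbour in $H$ one must fall back on conclusion (ii). Once the tree-like structure of $H$ and these small case distinctions are in place, the rest is routine verification that the listed adjacency constraints hold, which follows from $H$ being induced together with the uniqueness-of-neighbour statements established along the way.
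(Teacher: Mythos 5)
First, a comparison caveat: the paper does not prove this lemma at all — it quotes it as a folklore result from \cite{prismfree} — so there is no in-paper proof to measure you against, and I am judging the proposal on its own. Your overall plan is sound: minimality gives at most three non-cut-vertices of $H$, each being the \emph{unique} $H$-neighbour of a distinct $x_i$, hence every block of $H$ is an edge or a triangle and $H$ is an induced path, a $Y$, or a triangle with legs; then split on whether some vertex of $H$ sees two of the $x_i$. Note that the trichotomy follows more cleanly from your first observation than from your vague ``shorten between two cut-vertices'' remark: any block on at least four vertices, or two non-edge blocks, already forces a fourth non-cut-vertex, and some non-cut-vertex would then not be the unique neighbour of any $x_i$, so it could be deleted.

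There is, however, a genuine error in Case 1. After arranging (correctly, by deleting $h_0$) that $x_2$'s only neighbour in $H$ is $h_0$, you assert that $x_1$ has a second neighbour in $H$ ``unless everything collapses to a single vertex''. Minimality only guarantees that \emph{at least one} of $x_1,x_2$ has $h_0$ as its unique neighbour; both may. Concretely, take $V(G)=\{x_1,x_2,x_3,h_0,h_1\}$ with edges $x_1h_0$, $x_2h_0$, $h_0h_1$, $h_1x_3$: then $H=\{h_0,h_1\}$ is minimal, no $x_i$ has two neighbours in $H$, so outcome (i) is unavailable for every choice of $i,j,k$, and the correct conclusion is (ii) with $a=h_0$ and paths $h_0\dd x_1$, $h_0\dd x_2$, $h_0\dd h_1\dd x_3$ — whereas your argument outputs (i). The fix is easy (if $x_1$'s unique $H$-neighbour is also $h_0$, return (ii) centred at $h_0$), but it must be stated. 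A smaller inaccuracy in Case 2: minimality with respect to hitting the three neighbourhoods does not imply minimality with respect to containing the particular chosen vertices $a_1,a_2,a_3$ (a proper subset could hit the neighbourhoods through other vertices), and in the path case $H$ has only two non-cut-vertices, so ``the three non-cut-vertices are exactly $a_1,a_2,a_3$'' is not right as stated; what you actually need — and already have from your first step — is that each end of the path/leg is the unique $H$-neighbour of a distinct $x_i$, after which your treatment of the middle attachment and the (i)/(ii) dichotomy goes through.
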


\begin{lemma}
  \label{cliqueortrianglefree}
  Let $G \in \mathcal{F}$. Then either $G \in \mathcal{F}_2$, $G$ is a complete graph, or
  $G$ admits a star cutset.
\end{lemma}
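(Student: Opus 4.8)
The goal is to show that a very sparse graph $G$ that is neither triangle-free (i.e., not in $\mathcal{F}_2$) nor complete must admit a star cutset. The plan is to start from a triangle $T = \{a_1, a_2, a_3\}$ in $G$, which exists since $G \notin \mathcal{F}_2$. Since $G$ is not complete, there is a vertex not adjacent to all of $T$; more to the point, we want to find three vertices $x_1, x_2, x_3$ with $x_i$ non-adjacent to $a_i$ (so that the triangle is not "dominated") and then apply Lemma \ref{minimalconnected} to a minimal connected subgraph $H$ of $G \setminus \{x_1, x_2, x_3\}$ meeting the neighborhoods of all three.

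First I would argue that if $G$ does not admit a star cutset, then no vertex of the triangle $T$ can "dominate" $T$ in the sense of separating the rest: more precisely, I would show that for each $i$, the set $N[a_i]$ is not a cutset (it is a star cutset centered at $a_i$), so $G \setminus N[a_i]$ is connected (or empty), and similarly one can try to locate vertices far from $T$. The cleaner route is: pick any vertex $x_1$ non-adjacent to $a_1$ — if none exists, $a_1$ is complete to $V(G)$, and then since $G$ has no star cutset and $N[a_1] = V(G)$, one argues $G$ must be complete (using Lemma \ref{diamondlemma}: $G$ is diamond-free, and a diamond-free graph with a universal vertex in which $N[a_1]=V(G)$... actually one should check the neighbors of $a_1$ pairwise, using that $G$ is diamond-free and has no star cutset to force all adjacencies). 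Assuming such $x_1, x_2, x_3$ exist with $x_i \not\sim a_i$, take $H \subseteq G \setminus \{x_1,x_2,x_3\}$ connected, minimal, with a neighbor of each $x_i$; note $T \subseteq V(H)$ is not automatic, but $a_i \in N(x_i)$ fails only if... — here I'd instead apply the lemma and handle cases.

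The heart is the case analysis from Lemma \ref{minimalconnected}. In case (i), some $x_k$ has two non-adjacent neighbors in a hole or in $H$, contradicting that $G$ is sparse (the hole case is immediate; the path case: $x_i \dd P \dd x_j$ together with $x_k$ gives a structure forbidden by sparseness since $x_k$ has two non-adjacent neighbors in the hole formed by adding edge $x_ix_j$, or if $x_k$'s two neighbors are adjacent, we still get a vertex with two neighbors in a cycle — one must check this yields a member of $\mathcal{T}_1$, possibly a hole with a vertex having two neighbors, exactly what sparseness forbids, unless those two neighbors are adjacent and the "cycle" is a triangle, which needs separate care, but then the path from $x_i$ to $x_j$ plus that triangle builds a larger forbidden configuration or a theta/pyramid). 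In case (ii), the structure $P_1, P_2, P_3$ from $a$ to $x_1, x_2, x_3$ with the stated edge constraints is, after accounting for which of $x_ix_j$ are edges, either a theta (if no $x_ix_j$ edge), a pyramid (if exactly... no, pyramid needs a triangle on the $b$ side), or a prism-like object; since $x_i \not\sim a_i$ each $P_i$ has length $\geq 2$, and one checks: if none of the $x_ix_j$ are edges we get a theta — but a theta contains a hole with a vertex ($a$ if it has degree $\geq 3$... actually the ends $x_i$) — hmm, a theta itself need not immediately contradict sparseness, so I would need to locate a long enough hole and a vertex with two neighbors: in a theta $T$ with ends $u,v$ and paths $Q_1,Q_2,Q_3$, take the hole $Q_1 \cup Q_2$; then any internal vertex of $Q_3$ adjacent into it — there are none by definition, so use $v$ itself: $v$ has one neighbor on each of $Q_1,Q_2$, i.e., two neighbors in the hole $Q_1 \cup Q_2$, and they are non-adjacent (since each $Q_i$ has length $\geq 2$), contradicting sparseness. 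If exactly one $x_ix_j$ is an edge we similarly extract a hole and a vertex with two non-adjacent neighbors. If two or three of the $x_ix_j$ are edges we get a pyramid (need exactly the triangle on $\{x_1,x_2,x_3\}$, all three edges) or a configuration again containing a forbidden one — the pyramid case is directly excluded since $G$ is pyramid-free (need to verify the "at most one $P_i$ of length one" condition, which holds as all have length $\geq 2$). Case (iii) gives a prism directly (all three $x_ix_j$ edges, or fewer, in which case extract a hole with a vertex having two non-adjacent neighbors), excluded since $G$ is prism-free, after again checking all paths can be taken of length $\geq 1$ and the triangle $a_1a_2a_3$ together with $x_1x_2x_3$ forms the two triangles of the prism.

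The main obstacle I anticipate is the bookkeeping in the path subcase of (i) and in disentangling exactly which of cases (ii)/(iii) with partial $x_ix_j$-edges yields a genuine theta/prism/pyramid versus one that only \emph{contains} a forbidden induced subgraph; in particular I must be careful that "$x_k$ has exactly two neighbors in $H$ and they are adjacent" in case (i) does not contradict sparseness by itself (since it is not a hole-plus-vertex), so I need to push further — e.g., the path $P$ from $x_i$ to $x_j$ plus the edge $x_k$-to-its-two-adjacent-neighbors-plus-$x_k$ forms either a theta (from $x_k$ to a vertex of $P$, handled as above) or lets us find a shorter configuration, ultimately a hole with $x_i$ or $x_j$ or $x_k$ having two non-adjacent neighbors. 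A secondary subtlety is the degenerate start: ensuring the three vertices $x_1,x_2,x_3$ with $x_i\not\sim a_i$ exist, for which I would show that if, say, no vertex is non-adjacent to $a_1$, then $a_1$ is universal, and then a short argument using diamond-freeness (Lemma \ref{diamondlemma}) and the no-star-cutset hypothesis forces $G$ complete: any two neighbors $u,w$ of $a_1$ that were non-adjacent would, since $a_1$ is not a star-cutset center, be joined by a path avoiding $N[a_1]$ — impossible as $N[a_1]=V(G)$ — so $u\sim w$, hence $G=K_{|V(G)|}$.
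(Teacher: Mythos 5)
Your overall plan --- apply Lemma \ref{minimalconnected} and read off a forbidden configuration --- is the right instinct, but the way you set it up breaks down. The paper applies Lemma \ref{minimalconnected} with $x_1,x_2,x_3$ taken to be three vertices of a \emph{maximal clique} $K$ with $|K|>2$, and $H$ a minimal connected subgraph of $G\setminus K$ attaching to all three. Because $x_1x_2,x_1x_3,x_2x_3$ are then edges, outcome (ii) is literally a pyramid with triangle $\{x_1,x_2,x_3\}$, outcome (iii) is a prism, and outcome (i) is a hole through the edge $x_ix_j$ in which $x_k$ has two non-adjacent neighbors; moreover, diamond-freeness (Lemma \ref{diamondlemma}) together with maximality of $K$ gives that every vertex outside $K$ has at most one neighbor in $K$, which is exactly what kills the degenerate subcases (the ``two adjacent neighbors'' alternative in (i), and the ``at most one path of length one'' requirement in the pyramid). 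In your setup the three vertices $x_i$ are chosen as non-neighbors of the triangle vertices $a_i$, with no control over the adjacencies among $x_1,x_2,x_3$, and then the outcomes of Lemma \ref{minimalconnected} are simply not forbidden: if no $x_ix_j$ is an edge, outcome (ii) is a theta and outcome (iii) is a triangle with three pendant paths, neither of which contradicts $G\in\mathcal{F}$, and the genuine ``path from $x_i$ to $x_j$'' subcase of (i) produces no hole at all.

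Your attempted rescue of the theta case is itself incorrect: you take the hole $Q_1\cup Q_2$ and argue that the end $v$ has two non-adjacent neighbors in it, but $v$ is a vertex \emph{of} that hole, and sparseness only restricts vertices not in the hole. In fact a theta in which all three paths have length at least three is very sparse (such thetas occur inside subdivided walls, the very structures Theorem \ref{sparsethm} is trying to produce), so no contradiction can be extracted from the mere presence of a theta, and the same problem recurs in your partial-edge subcases of (ii) and (iii). The missing idea is to make the three attachment vertices pairwise adjacent from the start --- take them inside a maximal clique of size at least three and run the lemma in $G\setminus K$, using diamond-freeness for the ``at most one neighbor in $K$'' claim --- rather than taking them to be non-neighbors of the triangle. (Your treatment of the degenerate case where some $a_i$ is universal is essentially fine, but that is not where the difficulty lies.)
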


\begin{proof}
We may assume that $G$ does not admit a star cutset and $G$ is not a complete graph.
  Let $K$ be an inclusion-wise maximal clique of $G$ with $|K|>2$, and let
  $D=G \setminus K$. Since $G$ does not admit a clique cutset and is not a complete graph, it follows that
  $D$ is connected, non-empty, and every vertex of $K$ has a neighbor in $D$. By Lemma \ref{diamondlemma}, it follows that $G$ does not contain a diamond.

\sta{\label{onenbrinK} Let $v \in D$. Then $v$ has at most one neighbor in $K$.}

 Assume that $v$ has at least two neighbors in $K$, say $k_1$ and $k_2$.
  Since $K$ is a maximal clique, there exists $k_3 \in K$ non-adjacent to $v$.
But now $\{v,k_1,k_2,k_3\}$ is a diamond, a contradiction.
    This proves \eqref{onenbrinK}.
  \\
  \\
  Now let $x_1,x_2,x_3 \in K$. Apply Lemma \ref{minimalconnected} to
  $\{x_1,x_2,x_3\}$ and a minimal connected subgraph $H$ of $D$ containing at least  one neighbor of each of $x_1,x_2,x_3$. By \eqref{onenbrinK}, we have that $|V(H)| \geq 3$. Now
  the first outcome of Lemma \ref{minimalconnected} gives a hole and a vertex with
  two non-adjacent neighbors in it, the second outcome gives a pyramid, and
  the third  gives a prism.  In all cases we get a
  contradiction to the fact that $G \in \mathcal{F}$. This proves Lemma
  \ref{cliqueortrianglefree}.
\end{proof}

Now, by Lemma
  \ref{cliqueortrianglefree}, in order to prove Theorem \ref{sparsethmnostar} it is enough to prove:
\begin{theorem}
  \label{t=2}
  For all $k$, there exists $c = c(k)$ such that every
graph in $\mathcal{F}_2$ with no star cutset and with    treewidth more than $c$ 
contains a subdivision of $W_{k\times k}$ as an induced subgraph.
\end{theorem}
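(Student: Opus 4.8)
The plan is to follow the roadmap of the introduction: pass from $G$ to a ``central bag'' $\beta$, an induced subgraph of $G$ enriched by a few marker paths, such that $\tw(\beta)$ is at least a prescribed increasing function of $\tw(G)$ and such that $\beta$ contains no heavy seagull, and then analyse the sparse, heavy-seagull-free graph $\beta$ directly. First I would record the reduction already available: by Lemma~\ref{cliqueortrianglefree}, together with Lemma~\ref{startoclique} and the fact that clique cutsets do not raise treewidth, it suffices to treat $G\in\mathcal{F}_2$ with no star cutset, so $G$ is triangle-free; and since $G$ is also sparse, a vertex $v\notin H$ with two neighbours $a,b$ on a hole $H$ would force $ab\in E(G)$ and hence a triangle $vab$, so in fact every vertex of $G$ has at most one neighbour in every hole not containing it. Fixing $k$, I would argue by contradiction: assume $\tw(G)>c=c(k)$ and that $G$ has no induced subdivision of $W_{k\times k}$, and aim to bound $\tw(G)$.

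The first substantial step, carried out in Sections~\ref{sec:2clique} and~\ref{sec:heavyseagulls}, is to show that every heavy seagull of $G$ — that is, every induced path $a\dd b\dd c$ with $\deg_G(a),\deg_G(c)\ge 3$ — is broken by a two-clique-separation: there are cliques $K_1,K_2$ (each of size at most two, since $G$ is triangle-free) such that $a$ and $c$ lie in different components of $G\setminus(K_1\cup K_2)$. The idea is to play ``no star cutset'' against sparsity: if no such separation existed, one could route many internally disjoint induced paths between the neighbourhoods of $a$ and $c$, and by feeding minimal connected subgraphs meeting prescribed neighbourhoods into Lemma~\ref{minimalconnected} — using triangle-freeness and the one-neighbour-in-a-hole property at each step — this would force a forbidden configuration: a pyramid or prism (excluded since $G\in\mathcal{F}$), a vertex with two non-adjacent neighbours in a hole (excluded since $G$ is sparse), or a large induced subdivided wall (excluded by our assumption on $G$).

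With every heavy seagull broken, I would run the central bag method. Section~\ref{sec:centralbags} supplies the balanced-separator technology, culminating in Theorem~\ref{unbalancedbeta}, which extracts a central bag $\beta$ carrying most of the treewidth of $G$; feeding it the two-clique-separations that break the heavy seagulls produces a $\beta$ in which no heavy seagull survives and, moreover, in which every vertex of degree at least three lies in a component of bounded size (bounded in terms of $k$). The main obstacle here — the place where the earlier papers in the series do not transfer verbatim — is that these cutsets, being unions of two cliques, are disconnected, whereas connectedness of the cutsets was essential to the earlier central-bag arguments. I would resolve this as flagged in the introduction: redefine the central bag so that, for each separation used, it also contains a marker path of $G$ joining the two cliques whose interior is otherwise outside $\beta$ (analogous to, but not the same as, markers for $2$-joins), so that $\beta$ is still an induced subgraph of $G$, and then re-prove the bookkeeping lemmas of Section~\ref{sec:centralbags} in this enriched setting so that the broken heavy seagulls stay broken while $\tw(\beta)$ stays large. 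This adaptation is what I expect to be the crux of the argument.

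Finally, it remains to bound $\tw(\beta)$. Triangle-freeness together with the absence of a heavy seagull forces the set $Z$ of vertices of degree at least three in $\beta$ to induce a graph of maximum degree at most one: two neighbours in $Z$ of a common $z\in Z$ would be non-adjacent and hence form an induced $P_3$ with both ends of degree at least three. Combined with the boundedness of the components of $\beta$ meeting $Z$, this pins $\beta$ down to, essentially, a disjoint union of a bounded part and a part of maximum degree at most two; to turn this into an actual treewidth bound I would invoke the relevant result of \cite{TWV} (which in our setting also uses that $\beta$ is very sparse and contains no large induced wall), obtaining $\tw(\beta)\le c'(k)$. Since $\tw(G)$ is bounded by a function of $\tw(\beta)$ and $k$, choosing $c(k)$ large enough contradicts $\tw(G)>c$; hence $G$ must after all contain an induced subdivision of $W_{k\times k}$, completing the proof of Theorem~\ref{t=2}. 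Beyond the central-bag adaptation, the remaining pieces — the heavy-seagull analysis and the endgame — are fairly routine applications of sparsity, triangle-freeness, and the Grid Theorem.
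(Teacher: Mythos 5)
Your overall plan coincides with the paper's: break every heavy seagull by a two-clique-separation, form a central bag $\beta$ enriched with marker paths, transfer balanced separators from $\beta$ to $G$ via Theorem \ref{unbalancedbeta}, and bound $\tw(\beta)$ using that in a triangle-free graph with no heavy seagull the vertices of degree at least three induce a graph of maximum degree at most one. However, the two places where the actual mathematical content of Theorem \ref{t=2} lives are either missing or sketched with arguments that would not go through. The seagull-breaking step is the main one: you propose to ``route many internally disjoint induced paths between the neighbourhoods of $a$ and $c$'' and feed minimal connected subgraphs into Lemma \ref{minimalconnected}, invoking in addition the assumed absence of an induced wall. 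This is not a proof, and it is not how the separation is obtained. The paper first shows (Lemma \ref{norestricted}) that, since $G$ has no star cutset, every heavy seagull $a\dd v\dd u$ extends to a theta with end $a$; then, for a theta chosen with the path through the seagull minimal, Lemma \ref{nopaththeta} shows that every path from $u$ to the far side of the theta must meet $N[b']\cup N[v]$ (where $b'$ is the other end), so $(N[b']\cup N[v])\setminus\{a,u\}$ separates $a$ from $u$; Lemma \ref{startoclique} (sparsity) converts this union of two stars into a union of two cliques, and one then passes to a proper and finally an active canonical two-clique-separation (Theorem \ref{breakextheavyseagull}). Nothing in your sketch produces the theta or the double-star cutset, and no wall-freeness is needed or used at this stage.

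Second, before Theorem \ref{unbalancedbeta} can be applied you must verify its hypotheses for the separations you feed in: they must be canonical with respect to the weight function, active, pairwise loosely non-crossing --- this is precisely Theorem \ref{noncrossing2cliques}, whose proof needs activeness, sparsity and star-cutset-freeness --- and one needs $|\delta_{\mathcal S}(v)|\le 2$ for every $v\in\beta$, which in the paper is itself derived from the seagull-breaking property together with non-crossing. Without these the central bag is not even well defined and the weight bookkeeping collapses; your proposal treats only the marker-path adaptation as the crux and never addresses these verifications. Two smaller inaccuracies: the construction does not give that every vertex of degree at least three lies in a component of $\beta$ of bounded size, nor that $\beta$ is essentially a bounded part plus a part of maximum degree two; what it gives is that degree-$\ge 3$ vertices are pairwise at distance other than two, so they induce cliques and $\gamma_3(\beta)\le 1$. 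The treewidth bound for such $\beta$ is not an off-the-shelf external citation (the reference you lean on is not actually available in this paper) but is proved here as Theorem \ref{smalldensecomps} (Grid Theorem plus Theorem \ref{boundeddeg}, with prism-freeness excluding line graphs of wall subdivisions), yielding Theorem \ref{nobarbell}, which is what the proof of Theorem \ref{t=2} invokes.
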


\section{Balanced separators and central bags}
\label{sec:centralbags}
Let $G$ be a graph, and let $w: V(G) \to [0, 1]$. For $X \subseteq V(G)$, we write $w(X)$ for $\sum_{x \in X} w(x)$. We call $w$ a \emph{weight function} on $G$ if $w(G) = 1$. Now let $c \in [\frac{1}{2}, 1)$. A set $X \subseteq V(G)$ is a  {\em $(w, c)$-balanced separator} if $w(D) \leq c$ for every component $D$ of $G \setminus X$. The next two lemmas show how $(w, c)$-balanced separators relate to treewidth. The first result was originally proved by Harvey and Wood in \cite{params-tied-to-tw} using different language, and was restated and proved in the language of $(w, c)$-balanced separators in \cite{wallpaper}. 
\begin{lemma}[\cite{wallpaper,params-tied-to-tw}]\label{lemma:bs-to-tw}
Let $G$ be a graph, let $c \in [\frac{1}{2}, 1)$, and let $k$ be a positive integer. If $G$ has a $(w, c)$-balanced separator of size at most $k$ for every weight function $w$ on $G$, then $\tw(G) \leq \frac{1}{1-c}k$. 
\end{lemma}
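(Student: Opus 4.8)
The statement to prove is Lemma~\ref{lemma:bs-to-tw}: if $G$ has a $(w,c)$-balanced separator of size at most $k$ for every weight function $w$, then $\tw(G) \le \frac{1}{1-c}k$.

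\medskip

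The plan is to build a tree decomposition of width at most $\frac{1}{1-c}k$ directly, by a recursive ``divide along a balanced separator'' construction, a standard argument that I would phrase carefully to track the bag sizes. First I would set up the right notion to recurse on: rather than balanced separators with respect to the uniform measure, I would work with weighted subsets. Concretely, for a set $Z \subseteq V(G)$ I would like to decompose $G[Z]$ ``relative to a boundary''; the cleanest way is to prove a slightly more general statement by induction on $|Z|$, of the form: for every $Z \subseteq V(G)$ with $|Z| \le \frac{k}{1-c}$ there is nothing to do, and for larger $Z$, choose a weight function $w$ supported on $Z$ (say uniform on $Z$), apply the hypothesis to get a set $X$ with $|X| \le k$ such that every component of $G \setminus X$ meets $Z$ in at most $c|Z|$ vertices, then recurse on each piece $Z_i = (Z \cap D_i)$ together with $X$. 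The root bag is $X \cup (\text{current boundary})$, and we attach below it the decompositions obtained recursively for the pieces. The key bookkeeping is that at each level the ``carried-over'' boundary stays small.

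\medskip

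The heart of the argument, and the step I expect to be the main obstacle to getting the constant exactly $\frac{1}{1-c}k$, is controlling how the separator sets accumulate into the bags. The naive recursion would put $X$ together with all previously chosen separators into one bag and blow up the width. The fix is the classical observation: when we split $Z$ using $X$ into components $D_1, \dots, D_m$ of $G \setminus X$, we recurse on each $G[(Z \cap D_i) \cup X']$ where $X'$ is a bounded-size ``interface'', and crucially the new interface passed down is contained in $X \cup X'$ but has size that does not grow, because vertices of the old interface not adjacent to $D_i$ can be dropped, and one shows by a counting/potential argument that the total interface size never exceeds $\frac{c}{1-c}k$ — so each bag, being an interface plus one new separator $X$ of size $\le k$, has size at most $\frac{c}{1-c}k + k = \frac{k}{1-c}$. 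Making this precise requires choosing the invariant correctly: I would carry the invariant ``$|\text{current interface}| \le \frac{c}{1-c}k$ and every component we recurse into has weight at most $c$ times the weight at the previous step'', and check that after splitting, the new interface (old interface restricted to the neighbourhood of $D_i$, plus $X$) still satisfies the bound, using that the components are ``balanced'' so that the interface, being spread across components, contributes at most a $c$-fraction back.

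\medskip

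Finally I would verify the three tree-decomposition axioms for the tree $T$ built this way: coverage (every vertex of $G$ lands in some bag, since recursion terminates only when the remaining set is small enough to be a single bag, and every vertex is either put into a separator at some stage or into a terminal bag); the edge axiom (an edge $uv$: if it is never cut by a separator, both endpoints travel down the same branch and eventually appear together in a terminal bag; if it is cut, the endpoint on the separator side is in $X$ and the other endpoint's component's interface contains that vertex, so they appear together); and connectivity of the bags containing a fixed vertex $v$ (this is where the ``keep $v$ in the interface of every descendant piece that still contains a neighbour of $v$'' rule pays off — the set of nodes whose bag contains $v$ forms a connected subtree, namely a path from the node where $v$ is first separated down to wherever $v$ was last carried, together with the terminal bags where $v$ sits). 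Then the width bound is exactly the invariant computation above, giving $\tw(G) \le \frac{1}{1-c}k$ and completing the proof.
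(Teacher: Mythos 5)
Your plan (recursively splitting along balanced separators while carrying a bounded ``interface'') is the standard way to relate separators to treewidth, but the single step on which your constant rests is precisely the step you wave at, and for this scheme it fails. In the recursion, the bag at a node handling a piece with interface $S$ must contain all of $S$ together with the new separator $X$, $|X|\le k$ (you need $S$ in one bag to glue to the parent), and for a component $D$ of the piece minus $X$ the interface passed to $D$ must contain $(S\cap D)\cup(X\cap N[D])$: every vertex of $S$ lying in $D$ reappears below, so dropping it breaks axiom (iii), and dropping it from the top bag breaks the gluing. A $(w,c)$-balanced separator for $w$ uniform on $S$ only guarantees $|S\cap D|\le c|S|$, so the best invariant this argument sustains is $B\le cB+k$, i.e.\ $B=\frac{k}{1-c}$, giving bags of size up to $\frac{k}{1-c}+k$ and hence $\tw(G)\le\frac{2-c}{1-c}\,k$ (about $3k$ for $c=\frac12$), not $\frac{k}{1-c}$. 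Your claimed invariant $|S|\le\frac{c}{1-c}k$ would require every component $D$ to satisfy $|S\cap D|\le\frac{c}{1-c}k-k=\frac{2c-1}{1-c}k$, which for $c=\frac12$ means $S\cap D=\emptyset$ for \emph{every} component --- something a balanced separator simply does not provide: it controls the weight fraction, and $c\cdot\frac{c}{1-c}k+k-\frac{c}{1-c}k=\frac{(1-c)^2}{1-c}k>0$, so the invariant is not preserved even in the best case of uniform weights (non-uniform weights on $S$ only make the vertex count in a component worse). No choice of weight function repairs this, so the ``counting/potential argument'' you appeal to does not exist in this form; the complete graph (where $k\approx(1-c)n$ suffices for all $w$ while $\tw=n-1$) shows the stated constant is tight, so any proof of $\frac{1}{1-c}k$ must be more delicate than ``old interface plus one fresh separator.'' A smaller issue of the same kind: at your base case the terminal bag is the leftover set \emph{plus} its interface, which already overshoots the claimed width under your own invariant.

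For comparison: this paper does not prove Lemma~\ref{lemma:bs-to-tw} at all --- it is quoted from Harvey and Wood \cite{params-tied-to-tw}, restated in the language of $(w,c)$-balanced separators in \cite{wallpaper}, and the tight constant there is not obtained by the greedy recursion you describe. What your argument does yield, once the bookkeeping above is done honestly, is the weaker bound $\tw(G)\le\frac{1}{1-c}k+k$, which would in fact be perfectly adequate for every use of the lemma in this paper (only the quantitative form of ``unbalanced implies large treewidth'' is needed, with worse constants propagating harmlessly); but it does not prove the lemma as stated, and the gap is exactly at the claimed interface bound $\frac{c}{1-c}k$.
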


\begin{lemma}[\cite{cygan}]
\label{lemma:tw-to-weighted-separator}
Let $G$ be a graph and let $k$ be a positive integer. If $\tw(G) \leq k$, then $G$ has a $(w, c)$-balanced separator of size at most $k+1$ for every $c \in [\frac{1}{2}, 1)$ and for every weight function $w$ on $G$.
\end{lemma}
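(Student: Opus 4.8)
The plan is to extract the separator directly from a tree decomposition of $G$ of width at most $k$, via the classical ``sink of an oriented tree'' argument. Fix such a decomposition $(T,\chi)$ and a weight function $w$ on $G$. For an edge $e=t_1t_2$ of $T$, deleting $e$ splits $T$ into two subtrees, $T_1\ni t_1$ and $T_2\ni t_2$; set $B_{12}=\left(\bigcup_{s\in T_2}\chi(s)\right)\setminus\chi(t_1)$ and define $B_{21}$ symmetrically. The first point to verify is that $B_{12}\cap B_{21}=\emptyset$: a vertex in the intersection would lie in a bag on each side of $e$ but in neither $\chi(t_1)$ nor $\chi(t_2)$, yet by axiom (iii) the bags containing a fixed vertex form a subtree of $T$, and such a subtree cannot meet both $T_1$ and $T_2$ without containing $t_1$ or $t_2$. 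Hence $w(B_{12})+w(B_{21})\le w(G)=1$, so at most one of $w(B_{12}),w(B_{21})$ exceeds $\frac{1}{2}$.

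Next I would orient each edge $e=t_1t_2$ of $T$ toward $t_2$ when $w(B_{12})>w(B_{21})$, toward $t_1$ when the reverse strict inequality holds, and toward whichever endpoint is larger in a fixed linear order of $V(T)$ in case of equality. Since $T$ is finite and acyclic, following edges forward from any node reaches a node $t$ all of whose incident edges point toward it. For a component $T'$ of $T-t$ joined to $t$ by the edge $t't$, put $B(T')=\left(\bigcup_{s\in T'}\chi(s)\right)\setminus\chi(t)$; taking $(t_1,t_2)=(t',t)$ in the previous paragraph, $B(T')=B_{21}$, and the edge $t't$ being oriented toward $t$ means precisely $w(B_{12})\ge w(B_{21})$, so together with $w(B_{12})+w(B_{21})\le 1$ we get $w(B(T'))\le\frac{1}{2}\le c$.

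It remains to see that $\chi(t)$ is the desired separator. As $T'$ ranges over the components of $T-t$, the sets $B(T')$ are pairwise disjoint (same argument as for $B_{12}\cap B_{21}=\emptyset$) and their union is $V(G)\setminus\chi(t)$. Moreover each component $D$ of $G\setminus\chi(t)$ is contained in a single $B(T')$: if some edge of $D$ had its two ends in distinct sets $B(T_1')$ and $B(T_2')$, then a common bag of these ends (which exists by axiom (ii)) would force the bag-subtrees of both ends to lie in one component of $T-t$, contradicting that they lie in $T_1'$ and $T_2'$ respectively. Therefore every component $D$ of $G\setminus\chi(t)$ satisfies $w(D)\le w(B(T'))\le c$ for the appropriate $T'$, while $|\chi(t)|\le k+1$ because the decomposition has width at most $k$; so $\chi(t)$ is a $(w,c)$-balanced separator of size at most $k+1$.

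I do not anticipate a real obstacle, since this is a well-known fact; the only delicate points are the repeated appeals to axiom (iii) to pin down where bag-subtrees can live, and choosing the tie-breaking in the orientation carefully so that a sink is guaranteed to exist. Degenerate cases ($T$ a single node, or $G$ edgeless) are handled trivially, as then $G\setminus\chi(t)$ has no components.
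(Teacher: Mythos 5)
Your argument is correct: the edge-orientation/sink argument, the disjointness of the sets $B(T')$ via axiom (iii), and the containment of each component of $G\setminus\chi(t)$ in a single $B(T')$ via axiom (ii) are all sound (the only nitpick is that being oriented toward $t$ \emph{implies} $w(B_{12})\ge w(B_{21})$ rather than being equivalent to it, which is all you use). The paper itself gives no proof of this lemma but cites it from the textbook \cite{cygan}, and your proof is essentially the standard argument given there, so there is nothing further to compare.
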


A pair $(G,w)$ is {\em $d$-unbalanced} if $w$ is a weight function on $G$, and
$G$ has no $(w,\frac{1}{2})$-balanced separator of size at most $d$
(if there is a $(w,\frac{1}{2})$-balanced separator of size at most $d$, we say that
$(G,w)$ is {\em $d$-balanced}).

Let $K$ be an integer, let $G$ be a graph and let $K_1,K_2$ be two cliques of
$G$, each of size at most $K$. Let $(G,w)$ be a $2K$-unbalanced pair.
Following \cite{evenholetw},  we define the {\em canonical two-clique-separation}
for $\{K_1,K_2\}$, as follows.
Let $B(K_1,K_2)$ be a component of $G \setminus (K_1 \cup K_2)$ with
$w(B(K_1,K_2))$ maximum.
Since $(G,w)$ is  $2K$-unbalanced, it follows that $K_1 \cup K_2$ is not a $(w,\frac{1}{2})$-balanced separator; consequently   $w(B(K_1,K_2)) > \frac{1}{2}$, and so the choice of
$B(K_1,K_2)$
is unique.
Let $A(K_1,K_2)=G \setminus (B(K_1,K_2) \cup K_1 \cup K_2)$ and $C(K_1,K_2)=K_1 \cup K_2$.  Now
$S(K_1,K_2)=(A(K_1,K_2),C(K_1,K_2),B(K_1,K_2))$ is the
canonical two-clique-separation corresponding to $\{K_1,K_2\}$.

For the remainder of this section, let  $K$ be an integer, and let $(G,w)$ be a  $2K$-unbalanced pair. 
Let $K_1^1, K_2^1, K_1^2, K_2^2$ be cliques in $G$.
For $i \in \{1,2\}$, let $S_i=(A_i, C_i, B_i)$ be the canonical two-clique-separation for  $\{K_1^i,K_2^i\}$.
We say that $(A_1, C_1, B_1)$ and $(A_2, C_2, B_2)$ are {\em non-crossing} if $A_1 \cup C_1 \subseteq B_2 \cup C_2$ and
$A_2 \cup C_2 \subseteq B_1 \cup C_1$, and that
$(A_1, C_1, B_1)$ and $(A_2, C_2, B_2)$ are {\em loosely non-crossing} if
$A_1 \cap C_2= A_2 \cap C_1 = \emptyset$. Clearly, if
$S_1$ and $S_2$  are non-crossing, then they are loosely non-crossing.
(Note that here we break the symmetry between $A_i$ and $B_i$, and so our definition is slightly different from the classical definition of \cite{RS-GMV}.)

The following observation follows immediately from the definition of a canonical two-clique-separation.

\begin{lemma}
  \label{compattachments}
  Assume that $G$ does not admit a
  star cutset. Let $K_1,K_2$ be cliques of size at most $K$  in $G$ such that
  $A(K_1,K_2) \neq \emptyset$.
  Then the  following hold.
  \begin{enumerate}
    \item $K_1 \cap K_2 = \emptyset$. 
    \item Let $D$ be a component of
      $G \setminus (K_1 \cup K_2)$. Then  $N(D) \cap K_i \neq \emptyset$ for all
      $i \in \{1,2\}$, and so
      there is a path from a vertex of $K_1$ to a vertex of $K_2$
      with non-empty interior in $D$.
      \end{enumerate}
\end{lemma}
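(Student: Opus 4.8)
The plan is to treat the two parts by contradiction, in each case reading off an explicit proper star separation of $G$. Before starting I would record two facts about the canonical two-clique-separation $S(K_1,K_2)=(A(K_1,K_2),K_1\cup K_2,B(K_1,K_2))$: since $(G,w)$ is $2K$-unbalanced and $|K_1\cup K_2|\le 2K$, the set $K_1\cup K_2$ is not a $(w,\frac{1}{2})$-balanced separator, so $w(B(K_1,K_2))>\frac{1}{2}$ and in particular $B(K_1,K_2)\ne\emptyset$; and $A(K_1,K_2)\ne\emptyset$ by hypothesis. It is also harmless to assume $G$ is connected, which I would do at the outset.

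For part (1), suppose $v\in K_1\cap K_2$. As $K_1$ and $K_2$ are cliques each containing $v$, we have $K_1\cup K_2\subseteq N[v]$, so $S(K_1,K_2)$ is a separation with $A(K_1,K_2),B(K_1,K_2)\ne\emptyset$ whose cutset is a star with center $v$ — a proper star separation, contradicting the hypothesis on $G$. Hence $K_1\cap K_2=\emptyset$.

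For part (2), let $D$ be a component of $G\setminus(K_1\cup K_2)$ and suppose, for a contradiction, that $N(D)\cap K_1=\emptyset$ (the case of $K_2$ being symmetric). Since $D$ is a component of $G\setminus(K_1\cup K_2)$ we have $N(D)\subseteq K_1\cup K_2$, hence $N(D)\subseteq K_2$; and $N(D)\ne\emptyset$ since $G$ is connected and $D$ is a nonempty proper subset of $V(G)$ (it avoids the nonempty set $K_1\cup K_2$). Choose $v\in N(D)\subseteq K_2$, so that $N(D)\subseteq K_2\subseteq N[v]$. Then $(D,\,N(D),\,V(G)\setminus(D\cup N(D)))$ is a separation: it is proper because $D\ne\emptyset$ and, by part (1), $V(G)\setminus(D\cup N(D))\supseteq K_1\ne\emptyset$, and its cutset is a star with center $v$ — again a proper star separation, a contradiction. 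Thus $N(D)\cap K_i\ne\emptyset$ for $i\in\{1,2\}$, and picking $a\in N(D)\cap K_1$ and $b\in N(D)\cap K_2$ (distinct by part (1)) and taking a shortest path from $a$ to $b$ with interior in the connected set $D$ gives the required induced path.

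I expect no serious obstacle here; the proof is essentially bookkeeping with the definitions of star separation and canonical two-clique-separation. The only points needing attention are verifying that the two separations produced are genuinely proper — which is exactly where $w(B(K_1,K_2))>\frac{1}{2}$, the assumption $A(K_1,K_2)\ne\emptyset$, and part (1) (to keep $K_1$ disjoint from the cutset in part (2)) are used — and disposing of the degenerate possibility $N(D)=\emptyset$, which is exactly why one reduces to connected $G$ at the start.
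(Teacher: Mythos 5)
The paper states Lemma \ref{compattachments} as a bare observation, with no proof given, so there is no official argument to compare against; what you propose is certainly the intended routine verification, and its core is sound. Part (1) is correct as written: a common vertex $v$ of $K_1$ and $K_2$ makes $K_1\cup K_2\subseteq N[v]$, and $(A(K_1,K_2),K_1\cup K_2,B(K_1,K_2))$ is proper because $A(K_1,K_2)\neq\emptyset$ by hypothesis and $w(B(K_1,K_2))>\frac12$ by $2K$-unbalancedness. For part (2), the separation $(D,N(D),V(G)\setminus(D\cup N(D)))$ centered at a vertex of $K_2$ is the right construction.

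Two steps, however, are asserted rather than proved. First, ``it is harmless to assume $G$ is connected'' deserves its one-line justification: since $(G,w)$ is $2K$-unbalanced, $|V(G)|>2K\geq 2$, and a disconnected graph on at least three vertices always admits a star cutset (center a vertex of a component of size at least two, or any vertex when all components are singletons), so connectivity is a consequence of the standing hypotheses, not an extra assumption. Second, and more substantively, you justify properness of your separation by ``$V(G)\setminus(D\cup N(D))\supseteq K_1\neq\emptyset$ by part (1)''. Part (1) only says $K_1\cap K_2=\emptyset$; it gives no information about $K_1$ being nonempty, and nonemptiness is not automatic in this paper (compare the proof of Theorem \ref{breakextheavyseagull}, where nonemptiness of the two cliques is argued explicitly). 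If $K_1=\emptyset$ were possible under the hypotheses, the conclusion $N(D)\cap K_1\neq\emptyset$ would in fact be false, so something must rule it out. The repair is easy and can be done in either of two ways: (i) observe first that $K_1,K_2\neq\emptyset$, since otherwise $K_1\cup K_2$ is a single clique separating the nonempty sets $A(K_1,K_2)$ and $B(K_1,K_2)$, and a nonempty clique cutset is a star cutset while an empty one contradicts connectivity; or (ii) witness the far side of your separation not by $K_1$ but by a second component of $G\setminus(K_1\cup K_2)$, which exists because $A(K_1,K_2)$ and $B(K_1,K_2)$ are both nonempty and which is disjoint from $D\cup N(D)$ since $N(D)\subseteq K_1\cup K_2$. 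With those two patches the argument is complete; the final step (choosing $a\in N(D)\cap K_1$, $b\in N(D)\cap K_2$, distinct by part (1), and a shortest $a$--$b$ path through the connected set $D$) is fine.
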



 Throughout this section, let $\mathcal S$ be a set of sets $\{K_1, K_2\}$ where each of $K_1, K_2$ is a clique of size at most $K$ of $G$, and let $\mathcal T$ be the set of canonical two-clique-separations corresponding to members of $\mathcal S$. Moreover, we will assume each pair of separations in $\mathcal T$ is loosely non-crossing.
	   
	   We would now like to define a {\em central bag} for $\mathcal S$. Roughly speaking, this central bag is the intersection of the heavy blocks $B(S) \cup C(S)$ of the separations, together with some paths that capture the important $w$-related information about the light blocks. In order to define it, we start by considering the connected components of the union $\bigcup_{S \in \mathcal T} A(S)$ of the light sides of the separations. We first note that, given such a component $D$ and an $S_0 \in \mathcal T$, we either have $D \subseteq A(S_0)$ or $D \cap A(S_0) = \emptyset$. Indeed, $N(A(S_0)) \subseteq C(S_0)$, and so if $D$ simultaneously contains vertices in $A(S_0)$ and vertices not in $A(S_0)$, then $D \setminus A(S_0)$ must contain vertices in $C(S_0)$; but $D \setminus A(S_0) \subseteq \bigcup_{S \in \mathcal T: S \neq S_0} A(S)$, which has empty intersection with $C(S_0)$ by the loosely non-crossing property -- a contradiction. 
	    
	   We now want to ``reorganize'' the $A(S)$ by assigning each component of $\bigcup_{S \in \mathcal T} A(S)$ to a unique $A(K_1, K_2)$ in a consistent way. To that end, we fix a total order $\pi$ on $\mathcal S$, and group the components according to the $\pi$-minimal $\{K_1, K_2\}$ to whose $A(S)$ they belong. Specifically, for $\{K_1, K_2\} \in \mathcal S$, we let $A^*(K_1, K_2)$ be the union of all components $D$ of $A(K_1, K_2)$ such that for all $\{K_1', K_2'\} \in \mathcal S$ with $D \subseteq A(K_1', K_2')$, $\pi(A(K_1, K_2)) \leq \pi(A(K_1', K_2'))$. 
	   
	   Now, by Lemma~\ref{compattachments}, for every $\{K_1, K_2\}$ with $A^*(K_1, K_2) \neq \emptyset$, there exists a path $P_{K_1K_2}^*$ in $A^*(K_1, K_2)$ whose two (possibly coinciding) endpoints have a neighbour in $K_1$ and in $K_2$ respectively. Let $\mathcal{S}'=\{\{K_1,K_2\} \in \mathcal{S} \mid A^*(K_1,K_2) \neq \emptyset \}$, and write $$\beta=
  \bigcap_{\{K_1,K_2\} \in \mathcal{S}'} (B(K_1,K_2) \cup K_1 \cup K_2)
 \cup
  \bigcup_{\{K_1,K_2\} \in \mathcal{S}'}P_{K_1K_2}^*.$$

We call $\beta$ a {\em central bag} for $\mathcal{S}$.
Note that the choice of $\beta$ is not unique since the choice of
the paths $P_{K_1K_2}^*$ is not unique.
Observe that  $\bigcap_{\{K_1,K_2\} \in \mathcal{S}'} (B(K_1,K_2) \cup K_1 \cup K_2)=V(G) \setminus 
\bigcup_{S  \in \mathcal{T}}A(S)$.

Let $w_\beta$ be the function on $\beta$ defined as follows.
For $v \in \bigcap_{\{K_1,K_2\} \in \mathcal{S}'}(B(K_1,K_2) \cup K_1 \cup K_2)$, we
 set $w_\beta(v)=w(v)$.
Next let $\{K_1,K_2\} \in \mathcal{S}'$,
and let $a_{K_1,K_2}$ be the endpoint of $P_{K_1K_2}^*$ adjacent to a vertex of $K_1$; set
$w_\beta(a_{K_1,K_2})=w(A^*(K_1,K_2))$.
Let $w_\beta(v)=0$ for every $v \in \beta$ where $w_\beta$ has not been defined
yet. We call $w_\beta$ the {\em weight function inherited from $w$}.

\begin{lemma}
\label{wbetanormal}
The function $w_\beta$ is a weight function, that is, $w_\beta(\beta)=1$.
\end{lemma}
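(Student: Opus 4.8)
The plan is to show $w_\beta(\beta) = 1$ by tracking where the total weight of $G$ goes under the passage from $G$ to $\beta$. Recall that $w(G) = 1$, that $V(G)$ partitions as $\bigcup_{S \in \mathcal{T}} A(S)$ together with whatever remains, and — crucially — that by loose non-crossing every component of $\bigcup_{S \in \mathcal{T}} A(S)$ lies inside some $A(K_1,K_2)$ with $\{K_1,K_2\} \in \mathcal{S}$, hence (by the $\pi$-minimal choice) inside exactly one set $A^*(K_1,K_2)$ with $\{K_1,K_2\} \in \mathcal{S}'$. So I would first argue that the sets $A^*(K_1,K_2)$ for $\{K_1,K_2\} \in \mathcal{S}'$ are pairwise disjoint, and that their union is precisely $\bigcup_{S \in \mathcal{T}} A(S)$; consequently $V(G)$ is the disjoint union of $\Gamma := \bigcap_{\{K_1,K_2\} \in \mathcal{S}'}(B(K_1,K_2) \cup K_1 \cup K_2)$ and the sets $A^*(K_1,K_2)$, and therefore
\[
w(\Gamma) + \sum_{\{K_1,K_2\} \in \mathcal{S}'} w(A^*(K_1,K_2)) = w(G) = 1.
\]
The first thing to check here is that $\Gamma$ really is the complement of $\bigcup_{\mathcal{S}'} A^*(K_1,K_2)$: a vertex outside every $A^*$ is either in some $B(K_1,K_2) \cup K_1 \cup K_2$ for all relevant pairs, or it sits in an $A(S)$ that was excluded from $\mathcal{S}'$ because $A^*(\cdot)$ was empty there — but that emptiness is exactly the statement that such components were reassigned to a $\pi$-smaller pair, so in fact every vertex of $\bigcup_{S\in\mathcal{T}} A(S)$ does land in some $A^*$ with $\{K_1,K_2\}\in\mathcal{S}'$. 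This bookkeeping is the part I expect to require the most care.

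Next I would compute $w_\beta(\beta)$ directly from its definition. The vertex set $\beta$ is $\Gamma \cup \bigcup_{\mathcal{S}'} P_{K_1K_2}$. On $\Gamma$, $w_\beta$ agrees with $w$, contributing $w(\Gamma)$. Each path $P_{K_1K_2}$ has its two ends in $K_1 \cup K_2 \subseteq \Gamma$ and its interior in $A^*(K_1,K_2)$, which is disjoint from $\Gamma$ and from every other path's interior (the interiors lie in the pairwise-disjoint sets $A^*$); so the only vertices of $\beta$ outside $\Gamma$ are the path interiors, and they are pairwise disjoint. On each such interior, $w_\beta$ is $0$ except at the single chosen vertex $a_{K_1,K_2}$, where it equals $w(A^*(K_1,K_2))$. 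Hence
\[
w_\beta(\beta) = w(\Gamma) + \sum_{\{K_1,K_2\} \in \mathcal{S}'} w(A^*(K_1,K_2)) = 1,
\]
where the last equality is the displayed identity from the first paragraph.

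Two small points need to be nailed down to make this airtight. First, $a_{K_1,K_2}$ is well-defined and lies in the interior $P_{K_1K_2}^*$: since $A^*(K_1,K_2) \neq \emptyset$, Lemma~\ref{compattachments} guarantees $P_{K_1K_2}$ has interior in $A^*(K_1,K_2)$, so it has a vertex adjacent to $K_1$ that is not in $K_1$ — this is a genuine interior vertex, and distinct paths give distinct such vertices because the $A^*$ sets are disjoint. Second, I should confirm $P_{K_1K_2}^* \subseteq A^*(K_1,K_2)$ rather than merely meeting it; this is exactly how $P_{K_1K_2}$ was selected (a path from $K_1$ to $K_2$ with interior in $A^*(K_1,K_2)$), so there is nothing to prove beyond citing the construction. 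With these in hand the computation above is complete, and the lemma follows.
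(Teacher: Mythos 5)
Your proposal is correct and takes essentially the same route as the paper's own proof: both arguments rest on the observation that, by loose non-crossing, $V(G)$ is partitioned into $\bigcap_{\{K_1,K_2\} \in \mathcal{S}'}(B(K_1,K_2) \cup K_1 \cup K_2)$ and the pairwise disjoint sets $A^*(K_1,K_2)$, so the path interiors are pairwise disjoint and the weight of each $A^*(K_1,K_2)$ is transferred intact to $a_{K_1,K_2}$, giving $w_\beta(\beta)=w(G)=1$. Your extra bookkeeping (that every component of $\bigcup_{S\in\mathcal{T}}A(S)$ lands in exactly one $A^*$ with $\{K_1,K_2\}\in\mathcal{S}'$) is just an expanded version of the paper's first sentence.
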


\begin{proof}

We note that, for any $\mathcal S_0 \subseteq \mathcal S$, the pair of sets $\bigcap_{\{K_1, K_2\} \in \mathcal S_0}(B(K_1, K_2) \cup C(K_1, K_2))$ and $\bigcup_{\{K_1, K_2\} \in \mathcal S_0} A(K_1, K_2)$ partition $V(G)$. In particular, $$w(G) = w\left(\bigcap_{\{K_1, K_2\} \in \mathcal S'} B(K_1, K_2) \cup C(K_1, K_2)\right) + w\left(\bigcup_{\{K_1, K_2\} \in \mathcal S'} A(K_1, K_2) \right).$$

	    Moreover, by construction, $(A^*(K_1, K_2))_{\{K_1, K_2\} \in \mathcal S'}$ is a partition of $\bigcup_{\{K_1, K_2\} \in \mathcal S'} A(K_1, K_2)$, so that $$w(G) = w\left(\bigcap_{\{K_1, K_2\} \in \mathcal S'} B(K_1, K_2) \cup C(K_1, K_2)\right) + \sum\limits_{\{K_1, K_2\} \in \mathcal S'} w(A^*(K_1, K_2)).$$
	    
	    Since each $A^*(K_1, K_2)$ with $\{K_1, K_2\} \in \mathcal S'$ contains exactly one of the vertices $a_{K_1, K_2}$, we have $$\sum\limits_{\{K_1, K_2\} \in \mathcal S'} w(A^*(K_1, K_2)) = \sum\limits_{\{K_1, K_2\} \in \mathcal S'} w_\beta(a_{K_1, K_2}).$$
	    
	    Putting everything together, we obtain:
	    \begin{align*}
	        w_\beta(\beta) &= w_\beta\left(\bigcap_{\{K_1, K_2\} \in \mathcal S'} B(K_1, K_2) \cup C(K_1, K_2)\right) + \sum\limits_{\{K_1, K_2\} \in \mathcal S'} w_\beta(a_{K_1, K_2}) \\
	        &= w\left(\bigcap_{\{K_1, K_2\} \in \mathcal S'} B(K_1, K_2) \cup C(K_1, K_2)\right) + \sum\limits_{\{K_1, K_2\} \in \mathcal S'} w(A^*(K_1, K_2)) \\
	        &= w(G) = 1.
	    \end{align*}

\end{proof} 

For $v \in V(G)$, let
$$\delta_{\mathcal{S}}(v)=\bigcup_{K \text {: }v \in K \text{ and there exists } L \text{ such that }\{K,L\} \in \mathcal{S}}K.$$

\begin{theorem}
\label{unbalancedbeta}
Let $d, \Delta$ be integers. Assume that
$|\delta_{\mathcal{S}}(v)| \leq \Delta$ for every $v \in G$.
Assume also that $(\beta, w_\beta)$ is $d$-balanced.
Then $(G,w)$ is $\max(2Kd,\Delta d)$-balanced.
\end{theorem}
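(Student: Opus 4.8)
The goal is to convert a small balanced separator for the central bag $(\beta, w_\beta)$ into a small balanced separator for $(G, w)$. Suppose $Z \subseteq V(\beta)$ is a $(w_\beta, \frac12)$-balanced separator of $\beta$ with $|Z| \le d$. I want to build from $Z$ a set $Z' \subseteq V(G)$ with $|Z'| \le \max(2Kd, \Delta d)$ that is a $(w, \frac12)$-balanced separator of $G$. The natural candidate is to replace each vertex $z \in Z$ by a bounded-size set $\phi(z) \subseteq V(G)$ and set $Z' = \bigcup_{z \in Z} \phi(z)$. There are two kinds of vertices of $\beta$ to worry about. If $z$ lies in $\bigcap_{\{K_1,K_2\}\in\mathcal S'}(B(K_1,K_2)\cup K_1 \cup K_2)$, then $z$ is already a vertex of $G$ with $w(z) = w_\beta(z)$, and I take $\phi(z)$ to be either $\{z\}$ or, when $z$ sits on a clique $K$ appearing in $\mathcal S$, the set $\delta_{\mathcal S}(z)$ of size at most $\Delta$. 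If $z$ is an interior vertex of a marker path $P_{K_1K_2}$ — in particular if $z = a_{K_1,K_2}$, the unique vertex of positive $w_\beta$-weight on that path's interior, carrying weight $w(A^*(K_1,K_2))$ — then I take $\phi(z) = K_1 \cup K_2$, which has size at most $2K$ by definition; this is the step that "pays for" the whole component $A^*(K_1,K_2)$ hidden behind the marker path. Either way $|\phi(z)| \le \max(2K, \Delta)$, so $|Z'| \le \max(2Kd, \Delta d)$.

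**Why $Z'$ is balanced.** The key structural fact, from Lemma~\ref{wbetanormal} and loose non-crossing, is that $V(G)$ is partitioned into $\beta_0 := \bigcap_{\mathcal S'}(B \cup K_1 \cup K_2)$ together with the sets $A^*(K_1,K_2)$ for $\{K_1,K_2\}\in\mathcal S'$, and each $A^*(K_1,K_2)$ is "attached" to the rest of $G$ only through $K_1 \cup K_2$ (by Lemma~\ref{compattachments}, plus the fact that $A^*(K_1,K_2)$ is a union of components of $\bigcup_{S\in\mathcal T}A(S)$). I would show that every component $D$ of $G \setminus Z'$ is "captured" by some component $D^\beta$ of $\beta \setminus Z$, in the sense that $w(D) \le w_\beta(D^\beta)$, and then invoke $w_\beta(D^\beta) \le \frac12$. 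To set up the correspondence: given a component $D$ of $G \setminus Z'$, first note $D$ cannot contain a whole $A^*(K_1,K_2)$ and also reach $\beta_0$, because any such path would pass through $K_1 \cup K_2 \subseteq Z'$ (we included $\phi(a_{K_1,K_2}) = K_1 \cup K_2$). So $D$ either lies entirely inside one $A^*(K_1,K_2)$ — in which case $w(D) \le w(A^*(K_1,K_2)) = w_\beta(a_{K_1,K_2})$, and $a_{K_1,K_2} \notin Z$ (else $K_1 \cup K_2 \subseteq Z'$ and $D$ would be separated), so $a_{K_1,K_2}$ lies in a single component $D^\beta$ of $\beta \setminus Z$ with $w_\beta(D^\beta) \ge w_\beta(a_{K_1,K_2})$ — or $D \cap \beta_0 \neq \emptyset$, and then one argues $D \cap \beta_0$ is contained in a single component $D^\beta$ of $\beta \setminus Z$ (since $Z \cap \beta_0 \subseteq Z'$ separates $\beta_0$ at least as much inside $\beta$ as $Z'$ does inside $G$), while every $A^*(K_1,K_2)$ that $D$ meets is entirely contained in $D$ and contributes weight $w(A^*(K_1,K_2)) = w_\beta(a_{K_1,K_2})$ with $a_{K_1,K_2} \in D^\beta$; summing, $w(D) \le w_\beta(D^\beta)$.

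**The main obstacle.** The delicate point is the claim that two vertices of $\beta_0$ lying in the same $G$-component of $G \setminus Z'$ also lie in the same $\beta$-component of $\beta \setminus Z$ — a priori a path between them in $G$ could wander out of $\beta$ through a marker path or through some $A^*(K_1,K_2)$ and come back, using edges not present in $\beta$. I would handle this by showing such a detour can always be rerouted: a subpath leaving $\beta_0$ into $A^*(K_1,K_2)$ and returning enters and leaves through $K_1 \cup K_2$, and inside $\beta$ there is the marker path $P_{K_1K_2}$ connecting $K_1$ to $K_2$; since $Z'$ contains $K_1 \cup K_2$ whenever $a_{K_1,K_2} \in Z$, and contains $\phi$-images that cover the $K_i$-vertices appearing via $\delta_{\mathcal S}$ otherwise, one checks the endpoints of the detour are joined in $\beta \setminus Z$ by (a piece of) $P_{K_1K_2}$ together with clique edges. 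This rerouting argument, and the careful bookkeeping of which clique vertices land in $Z'$ under the two cases of $\phi$, is where the real work lies; the size bound and the weight inequality are then routine. I would organize the write-up as: (1) define $\phi$ and $Z'$, bound $|Z'|$; (2) prove the component-correspondence claim via rerouting; (3) conclude $w(D) \le w_\beta(D^\beta) \le \frac12$.
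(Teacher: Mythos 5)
Your proposal follows essentially the same route as the paper's proof: you replace each separator vertex of $\beta$ lying in $\bigcap_{\{K_1,K_2\}\in\mathcal{S}'}(B(K_1,K_2)\cup K_1\cup K_2)$ by $\delta_{\mathcal{S}}(\cdot)$ and each vertex lying in a marker-path interior by the corresponding $K_1\cup K_2$, obtain the same bound $\max(2Kd,\Delta d)$, and then show that every component of $G$ minus the blown-up set has $w$-weight dominated by the $w_\beta$-weight of a single component of $\beta$ minus the original separator, transferring connectivity through the marker paths exactly as in the paper's claims \eqref{eq:markerpath} and \eqref{eq:component}. The only blemishes are cosmetic: your blanket assertion that $D$ cannot contain a whole $A^*(K_1,K_2)$ and also reach the intersection part, and your stated reason that $a_{K_1,K_2}\notin Z$ when $D\subseteq A^*(K_1,K_2)$, are not right as written, but that case is closed at once by the standing hypothesis that $(G,w)$ is $2K$-unbalanced, which gives $w(A(K_1,K_2))<\frac{1}{2}$ --- precisely how the paper handles it.
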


\begin{proof}
Suppose that $X$ is a $(w_\beta,\frac{1}{2})$-balanced separator in $\beta$
with $|X| \leq d$.  We now construct a $(w, \frac{1}{2})$-balanced separator $Y$ of $G$
with $|Y| \leq \max(2Kd,\Delta d)$.

Let 
$$Y_1=X \cap   \left(\bigcap_{\{K_1,K_2\} \in \mathcal{S}'}(B(K_1,K_2) \cup K_1 \cup K_2)\right).$$
For $x \in Y_1$, let $Y(x)=\delta_{\mathcal{S}}(x)$.
Now let $x \in X \setminus Y_1$. It follows from the definition of $A^*(K_1,K_2)$ and $P_{K_1K_2}^*$ that $x \in P_{K_1K_2}^*$ for exactly one  $\{K_1,K_2\} \in \mathcal{S}'$;
let $Y(x)=K_1 \cup K_2$. Let $Y= \bigcup_{x \in X}Y(x)$. Then 
$|Y| \leq  \Delta |Y_1|  +2K(d -|Y_1|) \leq \max(\Delta d,2Kd)$, as required. Next we prove that $Y$ is a $(w, \frac{1}{2})$-balanced separator of $G$. 

\sta{\label{comps-of-G-beta} Let $F$ be a component of $G \setminus \beta$. Then, there exists $\{K_1,K_2\} \in \mathcal{S}$ such that $F \subseteq A^*(K_1,K_2)$.}

By construction of $\beta$, it holds that $G \setminus \beta \subseteq \bigcup_{\{K_1,K_2\} \in \mathcal{S}} A(K_1,K_2)$; consequently there exists $\{K_1,K_2\} \in \mathcal{S}$ such that $F \subseteq A^*(K_1,K_2)$. This proves \eqref{comps-of-G-beta}. \\

From now on, let $D$ be a component of $G \setminus Y$. We will show that $w(D) \leq \frac{1}{2}$. Since $(G, w)$ is $2K$-unbalanced, it follows that $w(A(K_1,K_2)) < \frac{1}{2}$ for all $\{K_1, K_2\} \in \mathcal{S}$, and so if $D$ is a component of $G \setminus \beta$, then by \eqref{comps-of-G-beta}, it follows that $w(D) \leq \frac{1}{2}$. Thus we may assume that $D \cap \beta \neq \emptyset$. 

Suppose first that $D \cap A(K_1,K_2) \neq \emptyset$ for some $\{K_1,K_2\} \in \mathcal{S}$ such that $K_1 \cup K_2 \subseteq Y$. Since $N(A(K_1,K_2)) \subseteq K_1 \cup K_2$ and $K_1 \cup K_2 \subseteq Y$, it follows that $D \subseteq A(K_1,K_2)$, and so $w(D) < \frac{1}{2}$. Therefore, we may assume that $D \cap A(K_1,K_2) = \emptyset$ for all $\{K_1, K_2\} \in \mathcal{S}$ such that $K_1 \cup K_2 \subseteq Y$. Next, suppose $D \cap A(K_1,K_2) \neq \emptyset$ for $\{K_1,K_2\} \in \mathcal{S}'$ such that
$P_{K_1K_2}^* \cap X \neq \emptyset$.
Let $x \in P_{K_1K_2}^* \cap X$. Now, $x \in X \setminus Y_1$, and so $Y(x) = K_1 \cup K_2 \subseteq Y$, a contradiction. Therefore, we may assume that for all $\{K_1,K_2\} \in \mathcal{S}'$ such that $D \cap A(K_1,K_2) \neq \emptyset$, it holds that
$P_{K_1K_2}^*$ is disjoint from $X$, and thus $P_{K_1K_2}^*$ is contained in a component of $\beta \setminus X$. Let $Q_1, \hdots, Q_m$ be the components of $\beta \setminus X$. 

\sta{\label{eq:markerpath} Let $\{K_1, K_2\} \in \mathcal{S}'$, and suppose that $P^*_{K_1 K_2} \subseteq Q_k$. Then $K_1 \cup K_2 \subseteq Q_k \cup Y$.}

Since $N(P^*_{K_1K_2}) \cap K_i \neq \emptyset$ for each $i \in \{1, 2\}$, it follows that each of $K_1,K_2$ either is contained in $Q_k$ or has a vertex in $X$.
Since 
every two separations in $\mathcal{T}$ are loosely non-crossing, it follows
that each of $K_1,K_2$ is either contained in $Q_k$ or has a vertex in $Y_1$.
Since $\delta_{\mathcal{S}}(x) \subseteq Y$ for every $x \in Y_1$, it follows that for $i \in \{1, 2\}$,
if $K_i \cap Y_1 \neq \emptyset$, then $K_i \subseteq Y$.
This proves \eqref{eq:markerpath}.

\sta{\label{eq:component} Let $\{K_1, K_2\} \in \mathcal{S}'$, and suppose that $N(A(K_1, K_2)) \cap Q_k \neq \emptyset$. Then either $K_1 \cup K_2 \subseteq Y$, or $P_{K_1K_2}^* \subseteq Q_k$. In particular, if $K_1 \cup K_2 \not\subseteq Y$, then there is at most one $k \in \{1, \dots, m\}$ with $N(A(K_1, K_2)) \cap Q_k \neq \emptyset$.}

If $P_{K_1K_2}^* \cap X \neq \emptyset$, then $K_1 \cup K_2 \subseteq Y$, and \eqref{eq:component} holds; so we may assume that $P_{K_1K_2}^* \cap X = \emptyset$, and since $P_{K_1K_2}^*$ is connected, it follows that $P_{K_1K_2}^* \subseteq Q_{k'}$ for some $k' \in \{1, \dots, m\}$. If $k = k'$, then \eqref{eq:component} holds, so we may assume that $k \neq k'$. It follows from \eqref{eq:markerpath} that $K_1 \cup K_2 \subseteq Q_{k'} \cup Y$ and that $K_1 \cup K_2 \subseteq Q_{k'} \neq \emptyset$, and thus $N(A(K_1, K_2)) \subseteq Q_{k'} \cup Y$, a contradiction. This proves \eqref{eq:component}. \\

Since $D \cap \beta \neq \emptyset$, it follows that for each $\{K_1, K_2\} \in \mathcal{S}'$ with $D \cap A(K_1, K_2) \neq \emptyset$, we have $D \cap N(A(K_1, K_2)) \neq \emptyset$, and in particular $(K_1 \cup K_2) \cap D \neq \emptyset$, so $K_1 \cup K_2 \not\subseteq Y$. Moreover, from \eqref{eq:component}, it follows that $P_{K_1K_2}^* \subseteq Q_k$ for some $k \in \{1, \dots, m\}$, and $N(A(K_1, K_2)) \cap Q_{k'} = \emptyset$ for all $k' \neq k$. Since $D$ is connected, it follows that there is a $k \in \{1, \dots, m\}$ such that for every $\{K_1, K_2\} \in \mathcal{S}'$ with $D \cap A(K_1, K_2) \neq \emptyset$, we have $N(A(K_1, K_2)) \subseteq Q_k \cup Y$, and $P^*_{K_1K_2} \subseteq Q_k$. It follows that $D \cap \beta \subseteq Q_k$, and $a_{K_1, K_2} \in Q_k$ for all such $\{K_1, K_2\} \in \mathcal{S}'$, and therefore $w(D) \leq w_\beta(Q_k) \leq \frac{1}{2}$. This concludes the proof. 
\end{proof}

Let $K_1,K_2$ be cliques of size at most $K$ in $G$.
We say that $S(K_1,K_2)$ is {\em proper} (or that the pair
$\{K_1,K_2\}$ is {\em proper}) if 
\begin{itemize}
    \item  some component $D$
of $A(K_1,K_2)$ satisfies $K_1 \cup K_2 \subseteq N(D)$, and 
\item if $|K_1|=|K_2|=1$, then   $A(K_1,K_2) \cup K_1 \cup K_2$ is not a path
from the vertex of $K_1$ to the vertex of $K_2$. 
\end{itemize}
We observe:

 \begin{lemma}
   \label{properdeg3}
   Let $K_1,K_2$ be cliques of size at most $K$ in $G$ and assume that
   $S(K_1,K_2)$ is a proper canonical two-clique-separation in $G$. Then either some
   vertex of $A(K_1,K_2)$ has at least three neighbors in
   $A(K_1,K_2) \cup K_1 \cup K_2$, or some vertex of $K_1 \cup K_2$ has
   at least two neighbors in $A(K_1,K_2)$.

 \end{lemma}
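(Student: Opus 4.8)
The plan is to argue by contradiction. Suppose that no vertex of $A := A(K_1,K_2)$ has at least three neighbours in $A \cup K_1 \cup K_2$ and that no vertex of $K_1 \cup K_2$ has at least two neighbours in $A$. Since $A$ is a union of components of $G \setminus (K_1 \cup K_2)$, we have $N(v) \subseteq A \cup K_1 \cup K_2$ for every $v \in A$, so the first assumption says exactly that $\deg_G(v) \le 2$ for all $v \in A$; hence every component of $A$ is an induced path or an induced cycle. Let $D$ be the component of $A$ furnished by the definition of \emph{proper}, so that $K_1 \cup K_2 \subseteq N(D)$; in particular $A \neq \emptyset$, so Lemma~\ref{compattachments} applies (it also lets us assume $K_1$ and $K_2$ are non-empty, since otherwise $K_1 \cup K_2$ would be a clique cutset and hence a star cutset, which $G$ has none of).

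The first step is to show $A = D$. If $D'$ were another component of $A$, then $D'$ is a component of $G \setminus (K_1 \cup K_2)$, so by Lemma~\ref{compattachments}(2) some $k \in K_1$ has a neighbour in $D'$; but $k \in K_1 \cup K_2 \subseteq N(D)$ also gives $k$ a neighbour in $D$, so $k$ has two neighbours in $A$, a contradiction. The second step pins down the shape of $D$. It is not a cycle, since every vertex of an induced cycle already has two neighbours inside it and so could have none in the non-empty set $K_1 \cup K_2$; thus $D = d_1 \dd \cdots \dd d_\ell$ is an induced path. Its interior vertices have no neighbour in $K_1 \cup K_2$, and when $\ell \ge 2$ each of $d_1, d_\ell$ has at most one (and when $\ell = 1$, $d_1$ has at most two). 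Combined with $K_1 \cup K_2 \subseteq N(D)$ and $K_1 \cap K_2 = \emptyset$ (Lemma~\ref{compattachments}(1)), this forces $K_1 = \{k_1\}$ and $K_2 = \{k_2\}$ with $k_1 \neq k_2$; moreover the only edges inside $D \cup \{k_1, k_2\}$ besides those of $D$ are one edge joining $\{d_1, d_\ell\}$ to $k_1$, one joining $\{d_1, d_\ell\}$ to $k_2$, and possibly the edge $k_1 k_2$.

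The last step is a dichotomy on whether $k_1 \sim k_2$. If $k_1 \not\sim k_2$, then, after possibly swapping $k_1$ and $k_2$, the set $A \cup K_1 \cup K_2 = D \cup \{k_1, k_2\}$ induces a path from $k_1$ to $k_2$; since $|K_1| = |K_2| = 1$, this contradicts the assumption that $S(K_1,K_2)$ is proper. If $k_1 \sim k_2$, then $\{k_1, k_2\} = K_1 \cup K_2$ is a clique whose removal leaves exactly the two non-empty components $D$ and $B(K_1,K_2)$ (recall $w(B(K_1,K_2)) > 1/2$, so the latter is non-empty); as $\{k_1, k_2\} \subseteq N[k_1]$, this is a proper star cutset, again a contradiction. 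Either way we are done. I expect the only real subtlety to be exactly this final dichotomy --- in particular excluding the case $k_1 \sim k_2$, where $D \cup K_1 \cup K_2$ closes up into a hole --- and it is precisely here, together with the disconnectedness issue handled in the first step, that the no-star-cutset hypothesis in force in this section is essential, the reduction $A = D$ being what makes it applicable.
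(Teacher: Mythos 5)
Your proof is correct and uses essentially the same ingredients as the paper's: a degree count showing the relevant components of $A(K_1,K_2)$ are paths, Lemma \ref{compattachments} to deal with components of $A(K_1,K_2)$ other than $D$, and the ``not a path from $K_1$ to $K_2$'' clause in the definition of proper. You run the argument in the contrapositive and establish $A(K_1,K_2)=D$ first, whereas the paper argues directly (a spanning tree of $N[D]$ with $K_1\cup K_2$ as leaves, then the path case, then a second component of $A(K_1,K_2)$); this is a reorganization rather than a genuinely different route. One point of substance: your final dichotomy on whether $k_1k_2$ is an edge covers a case the paper's proof passes over --- if $k_1$ is adjacent to $k_2$, then $N[D]$ can be a hole, so the paper's assertion that ``if $N[D]$ is not a path from the vertex of $K_1$ to the vertex of $K_2$, then some vertex of $D$ has at least three neighbors in $N[D]$'' is not literally true --- and your star-cutset argument is the natural way to exclude it. Be aware, though, that the no-star-cutset hypothesis you invoke is not a standing assumption of Section \ref{sec:centralbags}, nor is it stated in the lemma; it is, however, implicitly required by the paper's own proof as well (which cites Lemma \ref{compattachments}, whose hypotheses include it), and it holds where the lemma is applied (Theorem \ref{breakextheavyseagull}), so your proof is on the same footing as, and in this respect slightly more careful than, the paper's.
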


 \begin{proof}
   Let $D$ be a component of $A(K_1,K_2)$ such that
   $K_1 \cup K_2 \subseteq N(D)$. Then $N[D]$ has a spanning tree $T$  such that
   every vertex of $K_1 \cup K_2$ is a leaf of $T$.  If $|K_1|>1$, then
   $T$ has at least three leaves, and therefore some vertex of $D$ has
   degree at least three in $N[D]$ as required. Thus we may assume that
   $|K_1|=|K_2|=1$. If $N[D]$ is not a path from the vertex of $K_1$ to the
   vertex of $K_2$, then some vertex of $D$ has at least three neighbors in
   $N[D]$, and again theorem holds. Thus we may assume that
   $N[D]$ is a path from the vertex of $K_1$ to the
   vertex of $K_2$. Since $S(K_1,K_2)$ is proper, $A(K_1, K_2) \neq D$.
   Let $D'$ be a component of $A(K_1,K_2) \setminus D$. By Lemma \ref{compattachments}, we have that
   $K_1 \subseteq N(D')$. But then the vertex of $K_1$ has at least two neighbors in
   $A(K_1,K_2)$ as required. 
\end{proof}

 We say that $S(K_1,K_2)$ is {\em active} (or that the pair
 $\{K_1,K_2\}$ is {\em active}) if it is proper and for every 
 pair of cliques $K_1',K_2'$ of size at most $K$ in $G$ 
 such that $S(K_1',K_2')$ is proper and $K_1 \cup K_2 \neq K_1' \cup K_2'$, it holds that
 \begin{itemize}
 \item  $B(K_1',K_2') \cup K_1' \cup K_2'$ is not a proper subset of $B(K_1,K_2) \cup K_1 \cup K_2$; and
   \item if $B(K_1',K_2') \cup K_1' \cup K_2' = B(K_1,K_2) \cup K_1 \cup K_2$,
 then $B(K_1',K_2') \subset B(K_1,K_2)$.
\end{itemize}

  \begin{lemma}
    \label{active}
    Let $K_1,K_2$ be cliques of $G$ of size at most $K$.
    If $S(K_1,K_2)$ is active, then $K_1 \cup K_2 \subseteq N(B(K_1,K_2))$.
      \end{lemma}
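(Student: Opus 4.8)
The plan is to argue by contradiction: suppose $S(K_1,K_2)$ is active but, writing $B=B(K_1,K_2)$ and $C=K_1\cup K_2$, there is some $v\in C\setminus N(B)$. I will then exhibit a \emph{proper} two-clique-separation $S(K_1',K_2')$ with $K_1'\cup K_2'\neq C$ and with $B(K_1',K_2')\cup K_1'\cup K_2'$ a proper subset of $B\cup K_1\cup K_2$, contradicting the first clause in the definition of active.

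The key point is that such a $v$ is redundant in the cutset. Since $B$ is a component of $G\setminus C$ we have $N(B)\subseteq C$, so $v\notin N(B)$ gives $N(B)\subseteq C\setminus\{v\}$; hence $B$ is still a component of $G\setminus(C\setminus\{v\})$, and since $(G,w)$ is $2K$-unbalanced we have $w(B)>\frac12$, so $B$ is the unique component of $G\setminus(C\setminus\{v\})$ of weight exceeding $\frac12$. Next I write $C\setminus\{v\}$ as a union $K_1'\cup K_2'$ of two cliques of size at most $K$ with $K_1'\cup K_2'\neq C$: if $v$ lies in exactly one of $K_1,K_2$, say $K_1$, and $|K_1|\ge 2$, take $K_1'=K_1\setminus\{v\}$ and $K_2'=K_2$; if $v\in K_1\cap K_2$ and $K_1\setminus\{v\},K_2\setminus\{v\}$ are both nonempty, take $K_i'=K_i\setminus\{v\}$; otherwise one of $K_1,K_2$ equals $\{v\}$ and $C\setminus\{v\}$ is a subclique of the other (provided $C\setminus\{v\}\neq\emptyset$; the case $K_1=K_2=\{v\}$ forces $B$ to be a connected component of $G$, which is excluded once $G$ is assumed connected), and then I take $K_1'=\{k\}$ and $K_2'=C\setminus\{v\}$ for any $k\in C\setminus\{v\}$. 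In every case, by the uniqueness just observed, $B(K_1',K_2')=B$, hence $A(K_1',K_2')=A(K_1,K_2)\cup\{v\}$, so $B(K_1',K_2')\cup K_1'\cup K_2'=(B\cup C)\setminus\{v\}\subsetneq B\cup C$, as desired.

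It remains to show $S(K_1',K_2')$ is proper. Since $S(K_1,K_2)$ is proper, fix a component $D$ of $A(K_1,K_2)$ with $C\subseteq N(D)$; then $v\in N(D)$, so $v$ and $D$ lie in a common component $D^*$ of $A(K_1',K_2')=A(K_1,K_2)\cup\{v\}$. Since $D^*\subseteq A(K_1,K_2)\cup\{v\}$ and $A(K_1,K_2)\cap C=\emptyset$, we have $(C\setminus\{v\})\cap D^*=\emptyset$, and therefore $K_1'\cup K_2'=C\setminus\{v\}\subseteq N(D)\subseteq N(D^*)$, which is the first requirement of properness. The second requirement is in force only when $|K_1'|=|K_2'|=1$; by the way $K_1',K_2'$ were chosen, in that situation either $K_1'=K_2'$ — and then $A(K_1',K_2')\cup K_1'\cup K_2'$ contains both $v$ and the (coinciding) vertices of $K_1',K_2'$, hence has more than one vertex and so is not a path from the vertex of $K_1'$ to the vertex of $K_2'$ — or some endpoint $x$ of the purported path $P=A(K_1',K_2')\cup K_1'\cup K_2'$ lies in a common clique (among $K_1,K_2$) with $v$, so that $v$ is adjacent to $x$ and $v\in P$; but then, as $x$ is an endpoint of $P$ with a neighbour in $D$ (because $x\in K_1'\cup K_2'\subseteq N(D)$), the unique neighbour of $x$ in $P$ lies in $D$, forcing $v\in D\subseteq A(K_1,K_2)$, a contradiction. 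Hence $S(K_1',K_2')$ is proper, which completes the proof.

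The substance of the argument is the construction in the first two paragraphs: deleting $v$ from the cutset strictly shrinks the outside set $B\cup C$ while leaving the large side $B$ untouched. The step I expect to be the main obstacle is the last one — verifying that properness is preserved, and in particular ruling out the ``forbidden path'' in the definition of proper — since this is precisely where one must carefully track the degenerate small-clique configurations.
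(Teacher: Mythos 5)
Your proof is correct and takes essentially the same route as the paper: the paper's (three-line) argument likewise moves a vertex of $K_1\cup K_2$ with no neighbour in $B(K_1,K_2)$ out of the cutset and notes that the resulting canonical two-clique-separation is proper with $B\cup K_1\cup K_2$ strictly smaller, contradicting activity. Your additional case analysis for writing $C\setminus\{v\}$ as two cliques and your verification of properness (in particular ruling out the path configuration) are details the paper leaves implicit, and they check out.
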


  \begin{proof}
    Suppose not. We may assume that  there exists $x \in K_1$ such $x$  has no
    neighbor in $B(K_1,K_2)$. Then $(A(K_1,K_2) \cup \{x\}, (K_1 \cup K_2) \setminus \{x\}, B(K_1,K_2))$ is a  proper two-clique-separation of $G$ contrary to  the fact that
    $S$ is active.
    \end{proof}

\section{Two-clique-separations}
\label{sec:2clique}

The main result of this section will allow us to apply Theorem
  \ref{unbalancedbeta} with $K=2$:
 
  \begin{theorem}
  \label{noncrossing2cliques}
  Let $G \in \mathcal{F}_2$ and let $(G,w)$ be an $8$-unbalanced pair. Let $K_1,K_2,K_1',K_2'$  be cliques of $G$  such that  the separations $S=S(K_1,K_2)$ and $S'=S(K_1',K_2')$ are active
  in $G$. 
  Assume also that $G$ admits  no star cutset.
  Then $S$ and $S'$ are loosely non-crossing.
\end{theorem}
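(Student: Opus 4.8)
Throughout this sketch write $S=(A,C,B)$ and $S'=(A',C',B')$ with $C=K_1\cup K_2$ and $C'=K_1'\cup K_2'$. The relation ``loosely non-crossing'' is symmetric in $S$ and $S'$, so it suffices to prove $A\cap C'=\emptyset$; the inclusion $A'\cap C=\emptyset$ then follows by exchanging the roles of $S$ and $S'$. Since $S,S'$ are active they are proper, hence $A,A'\neq\emptyset$, so Lemmas \ref{compattachments}, \ref{properdeg3} and \ref{active} apply to both: in particular $K_1\cap K_2=K_1'\cap K_2'=\emptyset$, $C\subseteq N(B)$ and $C'\subseteq N(B')$. If $C=C'$ then $G\setminus C=G\setminus C'$, so $B=B'$ and $A=A'$, and the conclusion is immediate; so I would assume $C\neq C'$ from now on.

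The first real step is to fix the weight geometry. Since $(G,w)$ is $8$-unbalanced and $|C\cup C'|\le 8$, the heaviest component $F$ of $G\setminus(C\cup C')$ satisfies $w(F)>\frac12$. As $F$ is connected and disjoint from $C\cup C'$, and $w(F)>\frac12>w(A),w(A')$, the component of $G\setminus C$ containing $F$ must be $B$ and the component of $G\setminus C'$ containing $F$ must be $B'$; hence $F\subseteq B\cap B'$, and therefore $w(A\cup C\cup A')\le 1-w(F)<\frac12$. Now suppose for contradiction that some vertex $v\in A\cap C'$. Since $v$ lies in a component of $G\setminus C$ contained in $A$, we have $N(v)\subseteq A\cup C$; since $C'\subseteq N(B')$ (Lemma \ref{active}) and $v\in C'$, the vertex $v$ has a neighbour in $B'$, so $B'$ meets $A\cup C$. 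Combining this with $F\subseteq B\cap B'$, the connectivity of $B'$, and the fact that $A$ is anticomplete to $B$, one deduces $C\cap B'\neq\emptyset$. Thus the hypothesis $A\cap C'\neq\emptyset$ already forces the ``crossing'' pattern $A\cap C'\neq\emptyset$ and $C\cap B'\neq\emptyset$.

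The heart of the proof is to show this pattern is impossible. The plan is to prove that $A'\subseteq A$ — equivalently $B\cup C\subseteq B'\cup C'$ — which contradicts the activeness of $S'$: taking $S(K_1,K_2)$ as the competitor (it is proper and $C\neq C'$), $B\cup C$ is then a subset of $B'\cup C'$, and either it is a proper subset (immediate contradiction with activeness of $S'$) or $B\cup C=B'\cup C'$, forcing $A=A'$ and hence $v\in A\cap C'=A'\cap C'=\emptyset$, again a contradiction. To establish $A'\subseteq A$, I would argue that no component $D'$ of $A'$ can meet $B\cup C$. This is a case analysis driven by three ingredients: Lemma \ref{compattachments}(2), which attaches every component of $G\setminus C$ (resp.\ $G\setminus C'$) to \emph{both} of its cliques and so produces ``marker'' paths between $K_1$ and $K_2$ (resp.\ $K_1'$ and $K_2'$) through any given component; the weight bounds above, which in particular prevent any component of $G\setminus C'$ other than $B'$ from containing $F$, so any such component meets $B$ only inside the light components of $G\setminus(C\cup C')$; and sparsity, i.e.\ $G\in\mathcal F_2\subseteq\mathcal F$, so no vertex has two non-adjacent neighbours on a hole and $G$ is (prism, pyramid)-free. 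Concretely, given a component $D'$ of $A'$ meeting $B\cup C$, I would either assemble from $D'$ and the marker paths a proper two-clique-separation $S''=S(L_1,L_2)$ with $L_1,L_2$ sub-cliques of $C\cup C'$ (so $|L_1\cup L_2|\le 8$, which is exactly why $8$-unbalancedness is needed for $S(L_1,L_2)$ to be well-defined) and with $B(S'')\cup C(S'')$ a proper subset of $B\cup C$ or of $B'\cup C'$ — contradicting the activeness of $S$ or of $S'$ — or else, in the degenerate configurations where this candidate fails to be proper because the relevant side is a single path (the path exception in the definition of \emph{proper}, which is precisely the situation addressed by Lemma \ref{properdeg3}), I would read off from that path, together with a marker path on the other side and a vertex of $C$ or $C'$, a hole with a vertex having two non-adjacent neighbours in it, or a prism or a pyramid, contradicting sparsity.

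The main obstacle is this final case analysis. Because $K=2$, each of $K_1,K_2,K_1',K_2'$ may be a single vertex or an edge, and the degenerate ``path'' cases hidden in the definitions of \emph{proper} and \emph{active} have to be unwound separately in each configuration of how $C$ and $C'$ meet the sets $A,C,B$ and $A',C',B'$. Keeping the bookkeeping correct — verifying in each case that the two-clique-separation one builds is genuinely proper and that its ``big side plus cutset'' is genuinely a proper subset, and, in the remaining cases, that the configuration one exhibits really is a hole with a bad vertex or a prism/pyramid — is where essentially all the work of the proof lies; an alternative organisation would dispense with building $S''$ in some branches and instead derive the sparsity violation directly.
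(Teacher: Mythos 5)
Your opening moves are sound and match the paper's: reducing (by symmetry of the hypotheses) to refuting $A\cap C'\neq\emptyset$ in your shorthand $S=(A,C,B)$, $S'=(A',C',B')$; using $8$-unbalancedness to place a heavy component of $G\setminus(C\cup C')$ inside $B\cap B'$; and deducing $C\cap B'\neq\emptyset$ via Lemma \ref{active}. But the heart of the argument is missing, and the organizing plan you give for it cannot work as stated. You propose to prove $A'\subseteq A$ (equivalently $B\cup C\subseteq B'\cup C'$) and contradict activeness of $S'$. However, once the separations cross, the same style of argument you used for $C\cap B'\neq\emptyset$, applied with the roles of $S$ and $S'$ exchanged (this is exactly how the paper argues), forces $C\cap A'\neq\emptyset$ as well; a vertex of $C$ lying in $A'$ immediately witnesses $A'\not\subseteq A$. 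So ``no component of $A'$ meets $B\cup C$'' is false in precisely the configuration you must refute, and everything is deferred to the unexecuted case analysis, which you yourself describe as containing essentially all the work. In particular, nowhere in your sketch is it shown how triangle-freeness ($G\in\mathcal{F}_2$, not merely sparseness) or the no-star-cutset hypothesis actually enter; a proof that postpones exactly those steps has a genuine gap.

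For comparison, the paper refutes the fully crossing configuration by a concrete construction rather than a containment. After establishing that all four sets $C\cap A'$, $C\cap B'$, $C'\cap A$, $C'\cap B$ are nonempty, it finds a single component $D$ of $A\cup A'$ with $K_1\cup K_2\cup K_1'\cup K_2'\subseteq N[D]$, takes the heavy component $B^*$ of $G\setminus(K_1\cup K_2\cup K_1'\cup K_2')$, and considers the separation $(G\setminus N[B^*],N(B^*),B^*)$. Activeness of $S$ (together with Lemma \ref{properdeg3}) then forces both $K_2\cap K_2'\neq\emptyset$ and $N(B^*)\cap(K_1\cup K_1')$ not a clique --- this is where your ``build a proper two-clique-separation $S''$'' idea is actually carried out. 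Writing $K_2=\{s,t\}$ and $K_2'=\{s,r\}$, the paper then builds a hole through $B^*$ and $D$ on two non-adjacent vertices $k_1\in K_1$, $k_1'\in K_1'$; triangle-freeness rules out $A\cap A'=\emptyset$ (via the triangle $\{s,t,r\}$); the no-star-cutset hypothesis, applied to the star $K_2\cup K_2'$ centered at $s$, forces $K_1\cap K_1'\neq\emptyset$; and sparseness applied to the hole gives the final contradiction. These specific steps are the content of the theorem, and your proposal does not supply them or a working substitute.
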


  \begin{proof}
  Suppose  that $S$ and $S'$ are not loosely non-crossing.
  Then 
  $(C(K_1,K_2) \cup C(K_1',K_2')) \cap (A(K_1,K_2) \cup A(K_1',K_2')) \neq \emptyset$. Since $w(B(K_1, K_2)) > \frac{1}{2}$ and $w(B(K_1', K_2')) > \frac{1}{2}$, it follows that $B(K_1, K_2) \cap B(K_1', K_2') \neq \emptyset$. 
  
  \sta{\label{CB'}$C(K_1,K_2) \cap B(K_1',K_2') \neq \emptyset$.}

  Suppose $C(K_1,K_2) \cap B(K_1',K_2') =\emptyset$. Since $B(K_1',K_2')$
  is connected, it follows that  $A(K_1,K_2) \cap B(K_1',K_2')  =  \emptyset$.
  Since by Lemma \ref{active} every vertex of $K_1' \cup K_2'$ has a neighbor in $B(K_1',K_2')$
  it follows that  $A(K_1,K_2) \cap C(K_1',K_2')  =  \emptyset$.
  But now $
  B(K_1',K_2') \cup K_1' \cup K_2' \subseteq B(K_1,K_2) \cup K_1 \cup K_2$. Since $S$ is active, it follows that $B(K_1',K_2') \cup K_1' \cup K_2' = B(K_1,K_2) \cup K_1 \cup K_2$. But now one of $S$, $S'$ is not active by the second bullet of the definition of being active, a contradiction. This proves \eqref{CB'}.
  \\
  \\
\sta{\label{CA'} $C(K_1',K_2') \cap A(K_1,K_2) \neq \emptyset$.}

 Suppose $C(K_1',K_2') \cap A(K_1,K_2) = \emptyset$. Then, since
 $S$ and $S'$ are not loosely non-crossing,
 $C(K_1,K_2) \cap A(K_1',K_2') \neq \emptyset$.
 By \eqref{CB'}, $C(K_1,K_2) \cap B(K_1',K_2') \neq \emptyset$.
 Let $D(K_1,K_2)$
 be a component of $A(K_1,K_2)$ such that $K_1 \cup K_2 \subseteq N(D(K_1,K_2))$.
 Since  $C(K_1',K_2') \cap A(K_1,K_2) = \emptyset$ it holds that
 either $D(K_1,K_2) \subseteq B(K_1',K_2')$ or
 $D(K_1,K_2) \subseteq A(K_1',K_2')$. In the former case
 $D(K_1,K_2)$ is anticomplete to $C(K_1,K_2) \cap A(K_1',K_2')$,
 and in the latter case  $D(K_1,K_2)$ is anticomplete to $C(K_1,K_2) \cap B(K_1',K_2')$; in both cases a contradiction.
 This proves \eqref{CA'}.
 \\
 \\
 By \eqref{CB'}, \eqref{CA'}, and symmetry each of the four sets
 $C(K_1,K_2) \cap A(K_1',K_2')$, $C(K_1,K_2) \cap B(K_1',K_2')$, $C(K_1',K_2') \cap A(K_1,K_2)$, $C(K_1',K_2') \cap B(K_1,K_2)$ is nonempty.
 Since each of the sets $K_1,K_2,K_1',K_2'$ is a clique,
 we may assume that  $K_1 \cap B(K_1',K_2') \neq \emptyset$,
 $K_2 \cap A(K_1',K_2')  \neq \emptyset$,
 $K_1' \cap  B(K_1,K_2)  \neq \emptyset$, and
 $K_2' \cap A(K_1,K_2) \neq \emptyset$, and therefore
  $K_1 \subseteq B(K_1',K_2') \cup K_1' \cup K_2'$,
 $K_2 \subseteq A(K_1',K_2') \cup K_1' \cup K_2'$, $K_1' \subseteq B(K_1,K_2) \cup K_1 \cup K_2$, and $K_2' \subseteq A(K_1,K_2) \cup K_1 \cup K_2$.

\sta{\label{hugecomp} There is a component $D$ of
      $A(K_1, K_2) \cup A(K_1',K_2')$ such that
     $K_1 \cup K_2 \cup K_1' \cup K_2' \subseteq N[D]$.}

  Let $D(K_1,K_2)$ be  a component of $A(K_1,K_2)$ such that
  $K_1 \cup K_2 \subseteq N(D(K_1,K_2))$ and let
 $D(K_1',K_2')$ be  a component of $A(K_1',K_2')$ such that
  $K_1' \cup K_2' \subseteq N(D(K_1',K_2'))$ (such components exist because $S$ and $S'$ are active,
  and hence proper).
  Since   $C(K_1,K_2) \cap B(K_1',K_2') \neq \emptyset$
  and $C(K_1,K_2) \cap A(K_1',K_2') \neq \emptyset$
  it follows that $D(K_1,K_2) \not \subseteq A(K_1',K_2')$
  and  $D(K_1,K_2) \not \subseteq B(K_1',K_2')$, and
  therefore $D(K_1,K_2) \cap C(K_1',K_2') \neq  \emptyset$.
  Similarly  $D(K_1',K_2')  \cap C(K_1,K_2) \neq  \emptyset$.
  Consequently $D(K_1,K_2) \cup D(K_1',K_2')$ is connected.
  Now set $D$ to be the component of
  $A(K_1, K_2) \cup A(K_1',K_2')$ that contains $D(K_1,K_2) \cup D(K_1',K_2')$,
      and \eqref{hugecomp} holds.
\\
\\
Since $(G,w)$ is $8$-unbalanced, there is a component $B$ of
 $G \setminus (K_1 \cup K_1' \cup K_2 \cup K_2')$ with $w(B) > \frac{1}{2}$.
 Then $B \subseteq B(K_1,K_2) \cap B(K_1',K_2')$. 
Let $C=N(B)$ and let $A=G \setminus (B \cup C)$. 
Then $(A,C,B)$ is a separation of $G$. Note that $C \subseteq (C(K_1,K_2) \cup C(K_1',K_2')) \setminus (A(K_1,K_2) \cup A(K_1',K_2'))$

\sta{\label{K2K2'} $K_2 \cap K_2' \neq \emptyset$ and $C \cap (K_1 \cup K_1')$ is not a clique.}

 Note first that, since $B \subseteq B(K_1, K_2)$, we have $N(B) \subseteq (C(K_1, K_2) \cup C(K_1', K_2')) \setminus A(K_1, K_2)$. Then in view of the last sentence before \eqref{hugecomp}, this means $N(B) \subseteq K_1 \cup K_2 \cup K_1'$. Similarly, since $B \subseteq B(K_1', K_2')$, we obtain that $N(B) \subseteq K_1' \cup K_2' \cup K_1$. 
 
 This shows that, if $K_2 \cap K_2' = \emptyset$, or if
 $C \cap (K_1 \cup K_1')$ is a clique, then $C$ is the union of two cliques,
 say $X$ and $Y$, and
 so $(A,C,B)$ is a two-clique-separation of $G$.
 We claim that $(A,C,B)$ is proper.
 By \eqref{hugecomp}
 there is a component $D$ of $A$  such that $K_1 \cup K_2 \cup K_1' \cup K_2' \subseteq N[D]$, and therefore  $C \subseteq N(D)$. If $|C|>2$, the claim follows.
 Since $G$ does not admit a clique cutset, we may assume that
 $X=\{x\}$ and $Y=\{y\}$ and $x$ is non-adjacent to $y$.
 We need to show that $A$ is not a path from $x$ to $y$.
 Suppose it is. Then every vertex of $A$ has exactly two neighbors in
 $A \cup X \cup Y$, and each of $x,y$ has exactly one neighbor in
 $A$.  Since $A(K_1,K_2) \cup A(K_1',K_2') \subseteq A$, this
 contradicts Lemma \ref{properdeg3}. This proves the claim that $(A,C,B)$ is proper. 
  
 Observe that  $B \cup C \subseteq B(K_1,K_2) \cup K_1 \cup K_2$.
 Since  $C(K_1,K_2) \cap A(K_1',K_2') \neq \emptyset$, the inclusion is proper and we
 get a contradiction to the fact that $S$ is active. This proves
 \eqref{K2K2'}.
 \\
 \\
 In view of \eqref{K2K2'}, we
 write $K_2 \cap K_2'=\{s\}$. Note that $|K_2|, |K_2'| = 2$, since we know $K_2 \cap A(K_1', K_2')$ and $K_2' \cap A(K_1, K_2)$ are non-empty, and $s \notin A(K_1, K_2) \cup A(K_1', K_2')$. Hence write $K_2=\{s,t\}$ and $K_2'=\{s,r\}$, with $t \in A(K_1',K_2')$ and $r \in A(K_1, K_2)$. Also by \eqref{K2K2'}, there 
 exist non-adjacent $k_1 \in K_1 \cap C$ and $k_1' \in K_1' \cap C$.
 Let $P$ be a path from $k_1$ to $k_1'$ with $P^* \subseteq B$.
 Let $Q$ be a path from $k_1$ to $k_1'$ with $Q^* \subseteq D$ where $D$ is as in \eqref{hugecomp}. 
 Then $H=k_1 \dd P \dd k_1' \dd Q \dd k_1$ is a hole.
 
\sta{\label{AA'}
$A(K_1,K_2) \cap A(K_1',K_2')\neq \emptyset$.}

 Suppose that  $A(K_1,K_2) \cap A(K_1',K_2')= \emptyset$.
Since $S'$ is proper, $r$ has a neighbor $x \in A(K_1',K_2')$. Since $r \in A(K_1,K_2)$, we have $x \in A(K_1,K_2) \cup C(K_1,K_2)$, but by assumption, $A(K_1,K_2) \cap A(K_1',K_2') = \emptyset$, so we conclude $x \in C(K_1,K_2) = K_1 \cup \{s, t\}$. From above, $K_1 \subseteq B(K_1',K_2') \cup C(K_1',K_2')$, and $s \in C(K_1',K_2')$, so the only possible neighbor of $r$ lying in $A(K_1', K_2')$ is $t$. 
 But now $\{s,t,r\}$ is a triangle, contrary to
 the fact that $G \in \mathcal{F}_2$.
 This proves \eqref{AA'}.
 \\
 \\
 Since $N(A(K_1,K_2) \cap A(K_1',K_2')) \subseteq K_2 \cup K_2' \cup (K_1 \cap K_1')$, and since $K_2 \cup K_2'$ is not a star cutest in $G$, it follows that
 $K_1 \cap K_1' \neq \emptyset$. Let $x \in K_1 \cap K_1'$.
 Now $x$ has two non-adjacent neighbors in $H$, namely $k_1$ and $k_1'$,
 contrary to the fact that $G \in \mathcal{F}_2$.
 \end{proof}

\section{Heavy seagulls}
\label{sec:heavyseagulls}

A {\em seagull} is a graph that is a three-vertex path.
Given a seagull $F=a \dd v \dd u$ in $G$, an induced subgraph 
$T$ of $G$ is a {\em theta through $F$} if $T$ is a theta, one of $a$, $u$ is
an end of $T$, and $F \subseteq T$.
A seagull $a \dd v \dd u$ is {\em heavy} if
$\deg_G(a)>2$ and $\deg_G(u)>2$.
A heavy seagull is {\em extendable} if there is a theta through it in $G$.
The goal of this section is to show that every heavy seagull is
``broken'' by some two-clique-separation.
We start with a lemma.

\begin{lemma}
\label{nopaththeta}
Let $G \in \mathcal{F}_2$, let $F=a$-$v_1$-$u_1$ be  a seagull in $G$ and let $T$ be a theta
through $F$ in $G$. Let the ends of $T$ be $a,b$ and let the paths of $T$ be 
$P_1,P_2,P_3$  where $F \subseteq P_1$. Assume that $T$ is chosen with
$|P_1|$ minimum among all thetas through $F$ with end $a$ in $G$.
Let $P$ be a path from $u_1$ to $(P_2 \cup P_3) \setminus N[b]$.
Then $P^*$ contains a vertex of $N[b] \cup N[v_1]$.
\end{lemma}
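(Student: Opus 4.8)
The plan is to argue by contradiction. Suppose for contradiction that there is a path $P$ from $u_1$ to some vertex $p\in(P_2\cup P_3)\setminus N[b]$ with $P^*\cap(N[b]\cup N[v_1])=\emptyset$, and choose such a $P$ with $|V(P)|$ minimum. By the symmetry between $P_2$ and $P_3$ I may assume $p\in P_2$; write $P_1=a$-$v_1$-$u_1$-$\cdots$-$b$ and $P_2=a$-$\cdots$-$p$-$\cdots$-$b$. First I would note that $|V(P)|\ge 3$: since $u_1\in P_1^*$ it is anticomplete to $P_2^*\cup P_3^*$ and non-adjacent to $a$, so its only possible neighbours in $P_2\cup P_3$ are $a$ and $b$, neither of which lies in $(P_2\cup P_3)\setminus N[b]$; moreover if $|P_1|=2$ then $u_1=b$ and the neighbour of $u_1$ on $P$ already lies in $N[b]$, a contradiction, so from now on $|P_1|\ge 3$.

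Next I would collect the routine consequences of the minimality of $P$ together with the triangle-freeness of $G$: no internal vertex of $P$ lies in $P_2\cup P_3$ (otherwise one could either shortcut $P$ to a closer vertex of $(P_2\cup P_3)\setminus N[b]$, or land in $N[b]$, contradicting $P^*\cap N[b]=\emptyset$); hence $P$ meets $P_2$ exactly in $p$ and is disjoint from $P_3$; the only internal vertex of $P$ adjacent to $u_1$ is the first one and the only one adjacent to $p$ is the last one; and every internal vertex of $P$ other than its last has no neighbour in $(P_2\cup P_3)\setminus N[b]$.

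The crux is the claim that $P^*$ is disjoint from, and anticomplete to, $P_1\setminus\{a,v_1,u_1,b\}$, and anticomplete to $P_3\setminus\{a\}$. To prove it I would suppose some internal vertex $x$ of $P$ equals or is adjacent to a vertex $y$ of $P_1\setminus\{a,v_1,u_1,b\}$ (the argument for $P_3$ being analogous and easier), choosing $y$ as close to $b$ along $P_1$ as possible and then $x$ as close to $u_1$ along $P$ as possible. Reroute $P_1$ by replacing its subpath from $u_1$ to $y$ with the piece of $P$ from $u_1$ to $x$ (together with the edge $xy$ when $x\ne y$). The complementary walk --- the initial segment of $P_1$ up to $y$, followed by the tail of $P$ from $x$ to $p$ --- is itself an admissible path from $u_1$ into $(P_2\cup P_3)\setminus N[b]$, so the minimality of $|V(P)|$ bounds its length and thereby forces the rerouted walk to be strictly shorter than $P_1$. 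Passing to an induced subpath, and using triangle-freeness to eliminate the few chords that could survive (each such chord would create a triangle on $u_1$ together with two consecutive vertices of $P_1$), one obtains an induced path through $F$ from $a$ to a vertex of $P_1$ beyond $u_1$; combining it with the other two paths of $T$, suitably truncated, yields a theta through $F$ with end $a$ whose first path is shorter than $P_1$, contradicting the choice of $T$. The degenerate configurations --- $y$ the third vertex of $P_1$, or a truncated piece that becomes too short, or a neighbour of $b$ intervening --- I would instead dispatch by exhibiting a hole (built from $u_1$, part of $P$, and part of $P_1$, $P_2$ or $P_3$) in which $v_1$, a neighbour of $u_1$ on $P_1$, or $b$ has two non-adjacent neighbours, contradicting that $G$ is sparse. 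I expect this claim --- keeping track of the chords in the rerouted path and of these boundary cases --- to be the main obstacle.

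Granting the claim, I would finish as follows. Build a theta $T'$ with ends $a$ and $u_1$: its first path is $a$-$v_1$-$u_1$ (which contains $F$ and has length $2$); its second path goes from $u_1$ along $P$ to $p$ and then back along $P_2$ to $a$ --- truncated, if needed, at the last neighbour on $P_2$ of the penultimate vertex of $P$, or at $a$ itself, so that it stays induced; its third path goes from $u_1$ along $P_1$ to $b$ and then back along $P_3$ to $a$. Using the consequences of minimality from the second paragraph together with the claim, these three paths are induced, pairwise internally disjoint, have pairwise anticomplete interiors, and each has length at least $2$; so $T'$ is a theta through $F$ with end $a$ whose first path has length $2<|P_1|$ (recall $|P_1|\ge 3$), contradicting the minimality of $|P_1|$. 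This contradiction will establish the lemma.
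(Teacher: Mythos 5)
Your overall frame matches the paper's (assume an admissible path $P$ exists and build a theta through $F$ with end $a$ whose path containing $F$ is shorter than $P_1$), but your route runs through a statement much stronger than anything the paper proves: your ``crux claim'' that a minimum-length admissible $P$ is disjoint from and anticomplete to $P_1\setminus\{a,v_1,u_1,b\}$ and anticomplete to $P_3\setminus\{a\}$. You do not prove this claim --- you sketch a rerouting argument and yourself flag it as ``the main obstacle'' --- and the sketch has concrete failure points. First, the ``complementary walk'' (the segment of $P_1$ from $u_1$ to $y$ followed by the tail of $P$) need not be admissible: its interior may contain the neighbour $w_1$ of $b$ on $P_1$ (when $y$ is $w_1$ or lies beyond it), so the minimality of $|V(P)|$ gives no bound there. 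Second, even when it is admissible you must pass to an induced subpath before invoking minimality, and that passage can shorten the walk, destroying the length bookkeeping that is supposed to make the rerouted first path strictly shorter than $P_1$. Third, the hypothesis $P^*\cap(N[b]\cup N[v_1])=\emptyset$ does not prevent vertices of $P^*$ from being adjacent to $a$, $w_2$ or $w_3$: chords to $a$ ruin the inducedness of both the rerouted first path and your final second path (and shortcutting at $a$ may lose $F$), while adjacency to $w_3$ is exactly what must be excluded for your third path $u_1\dd P_1\dd b\dd P_3\dd a$ to have interior anticomplete to $P^*$; in the paper this takes dedicated hole arguments \eqref{notboth}--\eqref{w2}, whereas in your write-up it sits in the unexecuted list of ``degenerate configurations.'' Since the entire difficulty of the lemma lives inside this claim, the proposal as written is a plan rather than a proof.

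It is worth noting how the paper sidesteps the strong claim: instead of cleaning all of $P$, it extracts a minimal subpath $Q$ of $P$ that is disjoint from $T$, whose ends have unique attachments $t\in P_1^*\setminus\{v_1\}$ and $s\in P_2\setminus\{b,w_2\}$, proves only that $w_2$ and $w_3$ have no neighbours in $Q$, and then forms the replacement theta with ends $a$ and $t$ --- first path $a\dd P_1\dd t$, which still contains $F$ and is strictly shorter than $P_1$ --- rather than insisting, as you do, on ends $a$ and $u_1$ with a first path of length two. That weaker, local statement is what makes the hole arguments manageable; if you want to salvage your version, you would either need to carry out the full case analysis you deferred (including $w_1$-, $w_3$- and $a$-attachments) or switch to an extraction of this kind.
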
 

\begin{proof}
Suppose for a contradiction that 
$P^* \cap (N[b] \cup N[v_1])=\emptyset$. Let $N_T(b)=\{w_1,w_2,w_3\}$ where
$w_i \in P_i$. 
Then $P$ contains a path $Q=q_1 \dd \dots \dd q_k$ such that
$q_1$ has a neighbor in $P_1 \setminus \{a,v_1,b\}$,
$q_k$ has a neighbor in $(P_2 \cup P_3) \setminus \{b,w_2,w_3\}$ 
and $Q \cap T = \emptyset$. We may assume that $Q$ is chosen in such a 
way that $k$ is minimum. We may also assume that $q_k$ has a neighbor $s$ in
$P_2 \setminus \{b,w_2\}$. Since $G \in \mathcal{F}_2$, it follows that 
$N_T(q_k)=\{s\}$.
Let $t$ be a neighbor of $q_1$ in $P_1^* \setminus \{v_1\}$; similarly 
$N_T(q_1)=\{t\}$.
In particular $k>1$.
It follows from the minimality of $k$ that
$Q^*$ is anticomplete to $T \setminus \{w_2,w_3\}$. Moreover, since $s$-$Q$-$t$-$P_1$-$a$-$P_2$-$s$ is a hole, it follows that each of $w_2, w_3$ has at most one neighbor in $Q$.

\sta{\label{notboth} Not both $w_2$ and $w_3$ have a neighbor in $Q$.}

Suppose not. Let $i, j \in \{1, \dots, k\}$ be such that $q_i$ is adjacent to $w_3$ and $q_j$ is adjacent to $w_2$. Since $N_T(q_k) = \{s\}$, it follows that $i, j \neq k$. Now, $w_3$-$P_3$-$a$-$P_2$-$w_2$-$q_j$-$Q$-$q_i$-$w_3$ is a hole, and $b$ has two neighbors in it, 
contrary to the fact that $G \in \mathcal{F}_2$. This proves \eqref{notboth}. 

\sta{\label{w3} $w_3$ is anticomplete to $Q$.}

Suppose not. Let $i \in \{1, \dots, k\}$ be such that $q_i$ is adjacent to $w_3$. Then, by \eqref{notboth}, it follows that $w_2$ has no neighbor in $Q$, and so  $s$-$P_2$-$b$-$P_1$-$t$-$Q$-$s$ is a hole and $w_3$ has two neighbors $b$ and $q_i$ in it, contrary to the fact that $G \in \mathcal{F}_2$. This proves \eqref{w3}. 

\sta{\label{w2} $w_2$ is anticomplete to $Q$.}

Suppose $w_2$ has a neighbor in $Q$; let
$i \in \{1, \dots k\}$ be such that $w_2$ is adjacent to $q_i$. Let $S$ be the  path
$w_1 \dd P_1 \dd t \dd q_1 \dd Q \dd q_k$.
Since $t \neq v_1$, we have that $v_1 \not \in S$.
Now  $H=b \dd w_1 \dd S \dd q_k \dd s \dd P_2 \dd a \dd P_3 \dd b$ is a hole
and $b,q_i \in N_H(w_2)$, contrary to the fact that $G \in \mathcal{F}_2$.
This proves \eqref{w2}.
\\
\\
Since $s \neq w_2$ and $t \neq v_1$ the paths $t \dd P_1 \dd a$,
$t \dd q_1 \dd Q \dd q_k \dd s \dd P_2 \dd a$ and
$t \dd P_1 \dd b \dd P_3 \dd a$  form a theta through  $\{a,v_1,u_1\}$
that contradicts the choice of $T$ with $|P_1|$ minimum.
\end{proof}

The next result allows us to use Lemma \ref{nopaththeta} to handle heavy seagulls.

\begin{lemma}
  \label{norestricted}
  Let $G \in \mathcal{F}_2$ and let $F$ be a heavy seagull in $G$.
  Assume that $G$ does not admit a star cutset.
  Then $F$ is extendable.
\end{lemma}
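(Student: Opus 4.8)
The plan is to show that a heavy seagull $F = a \dd v_1 \dd u_1$ in $G$ extends to a theta. The obvious obstructions to being extendable are precisely what we will exploit: if $F$ is \emph{not} extendable, we want to produce either a star cutset (contradicting the hypothesis), or a structure forbidden in $\mathcal{F}_2$ (a prism, a pyramid, a wheel, or a vertex with two non-adjacent neighbors in a hole). First I would use the fact that $\deg_G(a) > 2$ and $\deg_G(u_1) > 2$ together with Lemma \ref{minimalconnected}: since $G$ has no star cutset, $G \setminus \{a, v_1, u_1\}$ cannot have $N(a) \setminus \{v_1\}$, $N(u_1) \setminus \{v_1\}$ separated in a trivial way. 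More precisely, since $\deg_G(a) \geq 3$, the vertex $a$ has a neighbor $a'$ other than $v_1, u_1$; similarly $u_1$ has a neighbor $u_1'$ other than $a, v_1$. The heaviness will be needed to guarantee these extra neighbors exist and to rule out degenerate outcomes of Lemma \ref{minimalconnected}.

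The main step is to build a minimal connected subgraph $H$ of $G \setminus \{a, v_1, u_1\}$ that contains a neighbor of each of $a$, $v_1$, $u_1$ — one needs such an $H$ to exist, which follows because $G$ has no star cutset centered at $v_1$ (so $N(a)\setminus N[v_1]$ and $N(u_1)\setminus N[v_1]$ live in the same component of $G \setminus N[v_1]$, after a little care; alternatively apply connectivity arguments directly). Then apply Lemma \ref{minimalconnected} to $\{a, v_1, u_1\}$ and $H$. Outcome (ii) gives a pyramid or, after tracing the paths, a theta through $F$ once we check the path from the apex to $v_1$ interacts correctly — this is essentially what we want, modulo the degenerate case where one of the three paths has length one, which heaviness of $F$ should be used to handle (a short path would force a vertex of large degree to have only one neighbor in the structure, contradicting $\deg > 2$ or producing a forbidden configuration). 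Outcome (iii) yields a prism directly, which is excluded since $G \in \mathcal{F}_2$. Outcome (i) is the delicate case: it produces a path or hole $P$ on two of the three vertices, with the third vertex having either two non-adjacent neighbors in it (immediately contradicting sparseness, since that third vertex lies outside the hole) or exactly two adjacent neighbors in it. The remaining subcase — the third vertex $x_k$ having exactly two adjacent neighbors in a path or hole $P$ through $x_i x_j$ — needs to be pushed further: if $x_k \in \{a, u_1\}$ (the heavy ends), one uses $\deg_G(x_k) > 2$ to find yet another neighbor and re-run the argument or directly locate a theta; if $x_k = v_1$, then $a \dd v_1 \dd u_1$ together with the path/hole $P$ on $\{a, u_1\}$ builds the desired theta through $F$ after checking the interior conditions.

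I expect the main obstacle to be the bookkeeping in outcome (i) of Lemma \ref{minimalconnected}: there are several sub-configurations depending on which of $a, v_1, u_1$ plays the role of the ``third vertex'' $x_k$, and on whether $P$ is a path or a hole, and one must in each case either directly assemble a theta through $F$ or invoke heaviness to supply an extra neighbor that breaks the configuration — being careful that the new neighbor does not create a wheel, prism, pyramid, or a hole with a bad vertex, all of which are forbidden in $\mathcal{F}_2$. A secondary point requiring care is verifying at the outset that a suitable minimal connected $H$ exists, which is exactly where the ``no star cutset'' hypothesis enters (a star cutset centered at $v_1$, or at $a$, or at $u_1$, would be the only way for the relevant neighbors to fail to be connected in the appropriate subgraph). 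Once these cases are dispatched, the lemma follows.
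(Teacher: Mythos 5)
Your proposal has genuine gaps and would not go through as written. First, the whole plan hinges on applying Lemma \ref{minimalconnected} to $\{a,v_1,u_1\}$ and a minimal connected $H \subseteq G \setminus \{a,v_1,u_1\}$ meeting $N(a)$, $N(v_1)$ and $N(u_1)$; but a heavy seagull only constrains the degrees of the \emph{ends} $a$ and $u_1$, and the middle vertex $v_1$ may well have degree exactly two, in which case $v_1$ has no neighbor outside $\{a,u_1\}$ and no such $H$ exists, so your argument cannot even start. (When $\deg(v_1)\geq 3$ the existence of $H$ is fine, though for a cleaner reason than the one you give: $\{a,v_1,u_1\}\subseteq N[v_1]$, so if $G\setminus\{a,v_1,u_1\}$ were disconnected it would itself be a star cutset.)

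Second, even when $H$ exists, the outcomes of Lemma \ref{minimalconnected} do not deliver what the lemma asks for. Recall that a theta through $F=a\dd v_1\dd u_1$ must have one of $a,u_1$ as an \emph{end} (this is essential for the later use in Lemma \ref{nopaththeta} and Theorem \ref{breakextheavyseagull}). Outcome (iii) is vacuous since $G$ is triangle-free, and outcome (ii) produces a theta whose ends are $v_1$ and the apex, with $a$ and $u_1$ internal to two of its paths -- not a theta through $F$, and no pyramid can arise either. Worse, in outcome (i) with $x_k=v_1$ (the case you claim ``builds the desired theta''), the structure is a path $P$ from $a$ to $u_1$ with interior $H$ together with $v_1$; in $P\cup\{v_1\}$ both $a$ and $u_1$ have degree two, so it contains no theta through $F$ at all, and there is no contradiction with sparseness either, since $v_1$ lies on the relevant cycles. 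In all of these cases one is forced to leave $H\cup\{a,v_1,u_1\}$ and exploit $\deg_G(a),\deg_G(u_1)>2$ together with further applications of the no-star-cutset hypothesis to manufacture extra paths -- which is precisely the content of the paper's actual proof (take two neighbors $x_1,x_2$ of $a$, paths from each $x_i$ to $u_1$ avoiding $N[a]$, minimize, build the hole $H'$, then connect $u_1$ to $H'$ avoiding $N[v_1]\setminus\{a\}$ and assemble a theta with end $a$). Your proposal only gestures at this extra work, so the key construction is missing.
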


\begin{proof}
  Let $F=a \dd v \dd u$. Since $F$ is heavy, there exist $x_1,x_2 \in N(a) \setminus \{v\}$. Since $G \in \mathcal{F}_2$  the set $\{x_1,v,x_2\}$ is stable. Since
  $G$ does not admit a star cutset, it follows that for $i \in \{1,2\}$, there exists a path $P_i$ from $x_i$ to
  $u$ with $P_i^* \cap N[a]=\emptyset$. By choosing $P_1,P_2$ with $P_1 \cup P_2$
  minimal, and permuting the indices if necessary, we may assume that one of the following two cases holds.
  \begin{enumerate}
  \item $P_1^* \subseteq P_2^*$ and $x_1$ has a neighbor in $P_2^*$.
  \item There exists a vertex $q \in V(G) \setminus \{v,a,x_1,x_2\}$ and
    a path $Q$ from $u$ to $q$ such that $P_i=u \dd Q \dd q \dd P_i' \dd x_i$
    and $P_1' \setminus q$ is disjoint from and anticomplete to $P_2' \setminus q$.
  \end{enumerate}

  We handle the former case first. Let $P_2=p_1 \dd \dots \dd p_k$
  where $p_1=u$ and $p_k=x_2$.  Let $i$ be maximum such that both $x_1$ and
  $v$ have neighbors in $p_i \dd P_2 \dd p_k$. Then there exists
  $x \in \{x_1,v\}$ such that $x$ is anticomplete to $\{p_{i+1}, \dots, p_k\}$,
  and consequently $H = x \dd p_i \dd P_2 \dd p_k \dd a \dd x$ is a hole.
  Let $y \in \{x_1,v\} \setminus \{x\}$.  Since $y$ is adjacent to $a$
  and has a neighbor in $\{p_i, \dots, p_k\}$, if follows that $y$
  has at least two neighbors in $H$, contrary to the fact that
  $G \in \mathcal{F}_2$. This proves that the first case is impossible, and so the second case holds.
    Now let $H'$ be the hole $q \dd P_2' \dd x_2 \dd a \dd x_1 \dd P_1' \dd q$.
    Since $v$ is adjacent to $a$ and $G \in \mathcal{F}_2$, it follows that
    $v$ is anticomplete to $P_1' \cup P_2'$, and in particular, $u\not\in V(H')$. 
    Let $R$ be a shortest path from $u$ to a vertex $u'$ with a neighbor in $H'$ such that $R$ is contained in $G \setminus (N[v] \setminus \{a,u\})$. Such a path exists, since $v$ is not a star cutset center. Since $G \in \mathcal{F}_2$, it follows that $u'$ has a unique neighbor $h$ in $H'$. If $h \not\in \{x_1, x_2, a\}$, then $H' \cup R \cup \{v\}$ is a theta in $G$ with ends $h$ and $a$, and paths $a$-$v$-$u$-$R$-$u'$-$h$ and the  the two paths from $h$ to $a$ in $H'$, and the result holds. So (by symmetry) we may assume that $h \in \{x_1,a\}$. 
    
    Let $R’$ be the path from $h$ to $q$ with interior in $R \cup Q$.
Write $R'=r_1 \dd \dots \dd r_t$, where $r_1=h$, $r_t=q$, and there exists  $i \in \{2, \dots, t-1\}$ 
such that $r_1, \dots, r_i \in R$  and $r_{i+1}, \dots, r_t \in Q$.
Suppose first that $v$ has a neighbor $w $ in $\{r_{i+1},..r_t\}$. Then 
$h \dd R' \dd q \dd P_2' \dd x_2 \dd a \dd h$ is a hole,
and $v$ has two neighbors  in it (namely $a$ and $w$), contrary to the fact that $G \in \mathcal{F}_2$.
So $v$ is anticomplete to $\{r_{i+1}, \dots, r_t\}$.

If  $v$ is anticomplete to $Q \setminus u$, then  $H’ \cup Q \cup \{v\}$ is a theta with ends $a,q$ and paths $a \dd v \dd u \dd Q \dd q$  and the  the two paths from $a$ to $q$ in $H'$, and so
$F$ is extendable. Thus we may assume that $v$ has a neighbor  in $Q \setminus u$, and therefore  $u$ is distinct from and  non-adjacent to $r_{i+1}$.

Next suppose that  $r_i$ is adjacent to $a$.  Then $i=2$ and   $h=a$.  Let $Q’$ be the path
from $a$ to $q$ contained in $Q \cup \{a,v\}$ (thus $Q'$ is obtained from   $a \dd v \dd u \dd Q \dd q$ by shortcutting through an edge incident with $v$).
Then $a,r_{i+1} \in Q'$. Now $a \dd Q'  \dd q \dd P_2' \dd x_2 \dd a$ is a hole, and $r_i$ has two 
neighbors in it (namely $a$ and $r_{i+1}$), contrary to the fact that $ G \in \mathcal{F}_2$.    This proves  
that $r_i$ is non-adjacent to $a$.

Now there is a path $S$ from $u$ to $q$ with $S \subseteq u \dd R \dd r_i  \cup  r_{i+1} \dd Q \dd q$.
It follows that $\{a,v\}$  is anticomplete to $S \setminus u$.
Consequently, $ a \dd v \dd u \dd S$ is a path from $a$ to $q$. If $x_1$ has a neighbor $s \in  S$, then
$x_1$ has two neighbors in the hole $a \dd S \dd q \dd P_2'  \dd x_2 \dd a$ (namely $a$ and $s$), 
contrary to the fact that $G \in \mathcal{F}_2$. This proves that $x_1$ is anticomplete to $S$.
But now $H’ \cup S$ is a theta with ends $a,q$ and paths $S$  and the two paths from $a$ to $q$ in $H'$, and so $F$ is extendable. 
 
 \end{proof}

Now we  deal with extendable seagulls.

\begin{theorem}
  \label{breakextheavyseagull}
  Let $G \in \mathcal{F}_2$ and let $(G,w)$ be a $4$-unbalanced pair.
  Assume that $G$ does not admit a star cutset. Let $F=a \dd v_1 \dd u_1$ be a
  heavy seagull in $G$.  Then there are two cliques $K_1,K_2$ of $G$
  such that $S(K_1,K_2)$ is active and
  $A(K_1,K_2) \cap \{a,u_1\} \neq \emptyset$.
    \end{theorem}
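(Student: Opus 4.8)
The plan is to place the heavy seagull inside a theta, read off from that theta a proper two-clique-separation that keeps $a$ or $u_1$ out of the weight-majority side, and then descend to an active separation.

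First, since $G\in\mathcal F_2$ has no star cutset and $F=a\dd v_1\dd u_1$ is a heavy seagull, Lemma~\ref{norestricted} gives that $F$ is extendable. I would fix a theta $T$ through $F$ with ends $a,b$ and paths $P_1\supseteq F$, $P_2$, $P_3$, chosen with $|P_1|$ minimum — precisely the setting of Lemma~\ref{nopaththeta}. Let $w_i$ be the neighbour of $b$ on $P_i$; note $a\not\sim b$, and that (apart from degenerate cases such as $u_1=b$ or $P_2,P_3$ of length $2$, which I would treat separately) $u_1$ is the third vertex of $P_1$, with a forward neighbour $p_4$ on $P_1$ and, since $\deg_G(u_1)>2$, a neighbour $z\notin V(T)$.

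The bulk of the work is to produce a proper two-clique-separation $S(K_1,K_2)$ with $a\notin B(K_1,K_2)\cup K_1\cup K_2$ or $u_1\notin B(K_1,K_2)\cup K_1\cup K_2$. I would obtain it from a separation of $G$ whose cutset lies in $N[v_1]\cup N[b]$, set up so that the side containing $b$ (hence the $b$-end of the theta and the bulk of $P_2\cup P_3$) becomes the weight-majority component. Lemma~\ref{nopaththeta} is what forbids escape: every path from $u_1$ to the part of $P_2\cup P_3$ far from $b$ has an internal vertex in $N[v_1]\cup N[b]$, so the $u_1$-side and the $b$-side communicate only through $N[v_1]\cup N[b]$; detours avoiding $P_2\cup P_3$ would, via an application of Lemma~\ref{minimalconnected} to a minimal connected subgraph meeting a suitable triple of vertices, force a pyramid, a prism, or (in the bad sub-case of outcome~(i)) a vertex with two non-adjacent neighbours in a hole — all forbidden for $G\in\mathcal F_2$. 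Since $G$ is triangle-free and sparse, I can then replace the $N[v_1]$- and $N[b]$-parts of the cutset by cliques $K_1\subseteq N[v_1]$, $K_2\subseteq N[b]$ with $|K_1|,|K_2|\le 2$, as in Lemma~\ref{startoclique}; since $|K_1\cup K_2|\le 4$ and $(G,w)$ is $4$-unbalanced, $B(K_1,K_2)$ has weight more than $\tfrac12$. Now $a$ and $u_1$ fall on opposite sides of this cut, so at most one of them lies in $B(K_1,K_2)$; a short case check (handling the possibility that the clique drawn from $N[v_1]$ is $\{v_1,a\}$ or $\{v_1,u_1\}$) then shows one of $a,u_1$ lies in $A(K_1,K_2)$. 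Properness of $S(K_1,K_2)$ follows from the degree hypothesis — $\deg_G(a)>2$ or $\deg_G(u_1)>2$ forces $A(K_1,K_2)\cup K_1\cup K_2$ not to be a path — together with the tightness of the construction, which gives a component of $A(K_1,K_2)$ adjacent to all of $K_1\cup K_2$.

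Passing to an active separation is then routine, and in fact monotone in the right direction. Being active means exactly that $B(K_1,K_2)\cup K_1\cup K_2$ is inclusion-minimal among the sets $B(S)\cup C(S)$ over proper two-clique-separations $S$ (with $B(K_1,K_2)$ then maximal). So, among the finitely many proper two-clique-separations $S$ with $B(S)\cup C(S)\subseteq B(K_1,K_2)\cup K_1\cup K_2$, pick one, $S(K_1',K_2')$, with $B(S)\cup C(S)$ minimal and then $B(S)$ maximal; this $S(K_1',K_2')$ is active, and $B(K_1',K_2')\cup K_1'\cup K_2'\subseteq B(K_1,K_2)\cup K_1\cup K_2$, so whichever of $a,u_1$ was outside $B(K_1,K_2)\cup K_1\cup K_2$ is still outside $B(K_1',K_2')\cup K_1'\cup K_2'$, i.e.\ lies in $A(K_1',K_2')$. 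The main obstacle is the middle paragraph: controlling \emph{all} attachments of the $u_1$-side of the theta (not only those through $P_2\cup P_3$ that Lemma~\ref{nopaththeta} directly governs), squeezing the cutset down to exactly two cliques of size at most $2$ via sparseness, and carrying out the case analysis in the degenerate thetas flagged in the first paragraph.
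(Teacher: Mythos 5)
Your outline follows the paper's proof essentially step for step: extendability via Lemma~\ref{norestricted}, a theta through $F$ with end $a$ and $|P_1|$ minimum, Lemma~\ref{nopaththeta} to show that $a$ and $u_1$ lie in different components of $G\setminus\bigl((N[v_1]\cup N[b])\setminus\{a,u_1\}\bigr)$, Lemma~\ref{startoclique} to turn this cutset (dominated by $v_1$ and $b$) into a union of two cliques of size at most $2$, $4$-unbalancedness to place one of $a,u_1$ in the $A$-side of the canonical separation, and then the same minimization (inclusion-minimal $B\cup C$, then maximal $B$, among proper pairs inside the current $B\cup C$) to reach an active pair while the chosen vertex stays in the $A$-side. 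Two of your worries are unnecessary: the degenerate cases cannot occur, since for $G\in\mathcal{F}_2$ every path of a theta has at least four vertices (a length-two path yields a vertex with two non-adjacent neighbours in the hole formed by the other two paths), so in particular $u_1\neq b$; and no appeal to Lemma~\ref{minimalconnected} is needed, because a path from $u_1$ to $a$ avoiding the cutset is itself a path from $u_1$ to $(P_2\cup P_3)\setminus N[b]$, so Lemma~\ref{nopaththeta} applies directly.

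The one genuine gap is the properness claim. The cliques $X,Y$ produced by Lemma~\ref{startoclique} carry no guarantee that a single component of $A(X,Y)$ attaches to all of $X\cup Y$ (e.g.\ the centers $v_1,b$ may sit in the cliques without having neighbours in the component of $A(X,Y)$ containing the surviving vertex $p\in\{a,u_1\}$), and ``tightness of the construction'' does not supply this. The paper inserts one further move: let $D$ be the component of $A(X,Y)$ containing $p$ and reset the cutset to $N(D)\subseteq X\cup Y$, which is again the union of two cliques $K_1,K_2$. Then $B(X,Y)$, being connected, of weight more than $\tfrac12$ and disjoint from $N(D)$, is contained in $B(K_1,K_2)$, so $D$ itself is a component of $A(K_1,K_2)$ with $K_1\cup K_2\subseteq N(D)$; both cliques are nonempty because $G$ has no clique cutset, and in the case $|K_1|=|K_2|=1$ the path alternative is excluded because $\deg_G(p)\geq 3$. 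With that insertion your argument coincides with the paper's.
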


\begin{proof}
  Let $T$ be a theta through $F$ (such $T$ exists by Lemma~\ref{norestricted}). We may assume that $a$ is an end of $T$;
  let the other end be $b$. Let the paths of $T$ be $P_1,P_2,P_3$ with
  $v_1 \in P_1$, and $T$ is   chosen with
$|P_1|$  minimum among all thetas through $F$ in $G$ with end $a$.

\sta{\label{breakau}  Let $D$ be a component of
    $G \setminus ((N[b] \setminus N[v_1]) \setminus \{a,u_1\})$. Then
    $|D \cap \{a,u_1\}| \leq 1$.}
 
Since $G \in \mathcal{F}_2$, we have that $|V(P_i)| \geq 4$ 
and so $v_1,u_1 \in P_1 \setminus \{b\}$.
Suppose for a contradiction that   $u_1, a \in D$. Then there is a path $P$
from $u_1$ to $a$ with $P^* \subseteq D$.  Consequently
$P^*$ contains no vertex of $N[b] \cup N[v_1]$.
Since $a \in (P_2 \cup P_3) \setminus N[b]$ 
we get a contradiction to Lemma \ref{nopaththeta} applied to $F,T$ and $P$.
This proves \eqref{breakau}.

\sta{\label{XvXb} There are cliques
    $X,Y$ of $G$ and a separation $(A, X \cup Y, B)$
    such that $a \in A$ and $u_1 \in B$.}

Let $D_a,D_u$ be the components of $G \setminus ((N[b] \cup N[v_1]) \setminus \{a,u_1\})$ with $a \in D_a$ and $u_1 \in D_u$. By \eqref{breakau}, we have that
$D_a \neq D_u$. It follows that there is a separation
$S=(A, (N[b] \cup N[v_1]) \setminus \{a,u_1\},B)$ of $G$ with
$D_a \subseteq A$ and $D_u \subseteq B$. Now \eqref{XvXb} follows
from Lemma \ref{startoclique} applied to $S$. This proves \eqref{XvXb}.
\\
\\
Let $X,Y$ be as in \eqref{XvXb}. 
Since $G \in \mathcal{F}_2$ and since  $(G,w)$ is a $4$-unbalanced pair,
the canonical two-clique-separation corresponding to $\{X,Y\}$ is defined,
and by \eqref{breakau} $|B(X,Y) \cap \{a,u_1\}| \leq 1$.
Since $|B(X,Y) \cap \{a,u_1\}| \leq 1$, we deduce that 
$A(X,Y) \cap \{a,u_1\} \neq \emptyset$; let
$p \in A(X,Y) \cap \{a,u_1\}$. Let $D$ be the component of $A(X,Y)$ containing
$p$, and let $N=N(D)$. Then $N$ is the union of two cliques $K_1,K_2$.

\sta{\label{K1K2proper} The pair $\{K_1,K_2\}$ is
    proper.}

  Observe that $B(X,Y) \subseteq B(K_1,K_2)$ and $D \subseteq A(K_1,K_2)$.
  Since $G$ does not admit a clique cutset, both $K_1$ and $K_2$ are non-empty.
  If  $|K_1 \cup K_2| \geq 3$, then
 $D$ is a component of
$A(K_1,K_2)$ with $K_1 \cup K_2 \subseteq N(D)$, and the claim holds.
Thus we may assume that $|K_1|=|K_2|=1$.
Since $F$ is heavy, it follows that
$\deg_G(p)>2$, and therefore $D \cup K_1 \cup K_2$ is not a path from
$K_1$ to $K_2$, and again the claim holds. This proves \eqref{K1K2proper}.
\\
\\
Now among all proper pairs $(K_1',K_2')$ with
$B(K_1', K_2') \cup K_1' \cup K_2' \subseteq B(K_1, K_2) \cup K_1 \cup K_2$
choose $K_1',K_2'$ with $B(K_1',K_2') \cup K_1' \cup K_2'$ inclusion-wise
minimal, and subject to that with $B(K_1',K_2')$ inclusion-wise maximal.
Then $(K_1',K_2')$ is active and $A(K_1',K_2') \cap \{a,u_1\} \neq \emptyset$.
This proves Theorem \ref{breakextheavyseagull}.
\end{proof}

\section{Proof of Theorem \ref{t=2}}
\label{sec:trianglefree}

We begin with proving an extension of Theorem \ref{boundeddeg}. For a graph $G$ and positive integer $d$, we denote by $\gamma_d(G)$ the maximum
degree of the subgraph of $G$ induced by the
set of vertices with degree at least $d$ in $G$.

\begin{theorem}
  \label{smalldensecomps}
  For all $k, \gamma > 0$, there exists $w = w(k, \gamma)$ such that every
graph $G$ with $\gamma_3(G) \leq \gamma$
and treewidth more than $w$ contains a subdivision of $W_{k\times k}$
or the line graph of a subdivision of $W_{k\times k}$.
\end{theorem}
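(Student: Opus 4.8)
The plan is to reduce to the bounded-degree case (Theorem \ref{boundeddeg}) by a contraction/cleaning argument around the low-degree vertices. Let $G$ be a graph with $\gamma_3(G) \le \gamma$ and large treewidth. Set $H$ to be the set of vertices of degree at least $3$ in $G$ (call these \emph{heavy}), and $L = V(G) \setminus H$ the vertices of degree at most $2$. By hypothesis, $G[H]$ has maximum degree at most $\gamma$. The vertices in $L$ form a disjoint union of paths and cycles together with their attachments: each component of $G[L]$ is a path or cycle, and each vertex of $L$ has at most two neighbors in $G$ in total, hence at most two neighbors in $H$. Thus $G$ is obtained from $G[H]$ (a bounded-degree graph) by subdividing edges, adding pendant paths, and adding a bounded number of disjoint cycles/paths whose vertices are anticomplete to $H$ or attach to $H$ along low-degree paths — in other words $G$ is, up to a controlled modification, a \emph{subdivision} of a bounded-degree graph.

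First I would make this precise: let $G'$ be the graph obtained from $G$ by suppressing every degree-$2$ vertex of $L$ (i.e., repeatedly replacing an induced path $x$-$\ell$-$y$ with $\ell \in L$ of degree $2$ by the edge $xy$ or by deleting $\ell$ if it creates a parallel edge), and deleting the components of $G$ entirely contained in $L$ (these are paths and cycles, which have treewidth at most $2$ and contribute nothing). Standard facts about treewidth give $\tw(G) \le \max(\tw(G'), 2)$, or more precisely $\tw(G') \ge \tw(G) - O(1)$ after accounting for the pendant paths and isolated structures; in any case large $\tw(G)$ forces large $\tw(G')$. Now every vertex of $G'$ has degree at least $3$: the heavy vertices keep degree $\ge 3$ (suppression does not decrease their degree), and we removed all the degree-$\le 2$ vertices. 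Moreover $G'$ has maximum degree at most $\gamma$: a vertex $v \in H$ has at most $\gamma$ neighbors in $H$, and each maximal path of $L$-vertices leaving $v$ gets contracted to a single edge to another heavy vertex, so $\deg_{G'}(v) \le \deg_{G[H]}(v) \le \gamma$. Hence $G'$ has maximum degree at most $\gamma$ and large treewidth, so by Theorem \ref{boundeddeg} it contains an induced subdivision of $W_{k \times k}$ or of $L(W_{k \times k})$.

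Finally, lift this back to $G$: an induced subgraph of $G'$ isomorphic to a subdivision of $W_{k \times k}$ (resp.\ of $L(W_{k \times k})$) corresponds, after re-inserting the suppressed degree-$2$ vertices, to an induced subgraph of $G$ that is again a subdivision of $W_{k \times k}$ (resp.\ of $L(W_{k \times k})$) — subdividing edges of a wall or of a line graph of a wall keeps it in the same family, and re-inserting degree-$2$ vertices exactly corresponds to subdividing edges. One must check that no new adjacencies are created: since the suppressed vertices had degree exactly $2$ in $G$ with both neighbors on the path being re-inserted, and since in $G'$ the chosen subgraph is induced, the re-expanded subgraph is induced in $G$ as well. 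This yields the desired $w = w(k,\gamma)$, namely $w(k,\gamma) = \max(c(k,\gamma), 2) + O(1)$ where $c$ is the function from Theorem \ref{boundeddeg} and the $O(1)$ absorbs the suppression/pendant-path bookkeeping.

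The main obstacle I anticipate is the bookkeeping in the reduction $G \leadsto G'$: one has to handle pendant paths (leaves of $G$, whose endpoints in $L$ have degree $1$), $L$-components attached to $H$ at a single heavy vertex (forming "hanging" paths or cycles-with-a-chord), and components of $G$ lying entirely in $L$, and argue that none of these can raise treewidth by more than a constant — and conversely that suppressing degree-$2$ vertices does not \emph{lose} treewidth. Both directions are folklore (treewidth is unchanged by suppressing degree-$\le 2$ vertices and by deleting components of treewidth $\le 2$, up to the trivial lower bound), but stating the estimates cleanly so the final constant is explicit requires care. A secondary subtlety is ensuring the induced-subgraph condition is preserved in both directions of the suppression, which uses crucially that the suppressed vertices lie in $L$ and so have no neighbors other than the two consecutive vertices on their path.
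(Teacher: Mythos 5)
There is a genuine gap, and it is at the heart of your reduction: the claim that the suppressed graph $G'$ has maximum degree at most $\gamma$ is false. The hypothesis $\gamma_3(G)\leq \gamma$ only bounds, for a vertex $v$ of degree at least three, the number of its neighbors that \emph{themselves} have degree at least three; it says nothing about how many degree-$1$ or degree-$2$ neighbors $v$ may have. When you suppress the degree-$2$ vertices, each maximal path of $L$-vertices leaving $v$ becomes a \emph{new} edge of $G'$ incident with $v$, so $\deg_{G'}(v)$ can be as large as $\deg_G(v)$, which is unbounded; the inequality $\deg_{G'}(v)\leq \deg_{G[H]}(v)$ does not hold. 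A concrete counterexample: let $G$ be the $1$-subdivision of $K_n$. Then every edge of $G$ joins a branch vertex to a subdivision vertex of degree $2$, so $\gamma_3(G)=0$, yet suppressing the degree-$2$ vertices returns $K_n$, with maximum degree $n-1$. Thus Theorem \ref{boundeddeg} cannot be applied to $G'$, and the rest of the argument collapses. (This is exactly the regime the theorem is designed for; note the paper's remark that the statement becomes false if $\gamma_3$ is replaced by $\gamma_4$, which indicates that high-degree vertices with many light neighbors are the essential difficulty, not a bookkeeping nuisance.) There is also a secondary problem in your lifting step for the second outcome: re-inserting a suppressed path into an edge of an induced copy of the line graph of a subdivided wall subdivides an edge of that line graph, and if that edge lies in a triangle (corresponding to a branch vertex of the wall), the resulting graph is no longer the line graph of a subdivision of a wall, so the lifted subgraph need not be one of the two allowed outcomes.

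The paper's proof avoids both issues by not trying to bound the degree of the whole graph. It first applies the Grid Theorem (Theorem \ref{wallminor}) to find a subgraph $X$ of $G$ that is a subdivision of a large wall, and then works inside $H=G[V(X)]$, which still has large treewidth. The point is that every vertex of $X$ has degree at least two in $X$, so if some $v\in V(X)$ had degree at least $\gamma+4$ in $H$, then at least $\gamma+1$ of the edges at $v$ lie outside $E(X)$, and each of the corresponding neighbors has degree at least three in $H$ (hence in $G$); this contradicts $\gamma_3(G)\leq\gamma$. So $H$ has maximum degree at most $\gamma+3$, and Theorem \ref{boundeddeg} applied to $H$ gives the induced wall subdivision or line graph directly inside $G$, with no lifting needed. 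If you want to salvage your approach, you would essentially have to re-introduce this step, since bounded degree simply is not available after suppression.
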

\begin{proof}
 Let $w=w(k,\gamma)=f(c(k,\gamma+3))$, where $f$ is as in Theorem \ref{wallminor} and $c$ is an in Theorem \ref{boundeddeg}. Let $G$ be a graph with treewidth at least $w$. By Theorem \ref{wallminor}, $G$ has a subgraph $X$ which is isomorphic to $W_{c(k,\gamma+3)\times c(k,\gamma+3)}$. Let $H=G[V(X)]$. Then $H$ has treewidth at least $c(k,\gamma+3)$. Also, we claim that $G$ has maximum degree at most $\gamma+3$. To see this, suppose for a contradiction that $H$ has a vertex $v$ of degree at least $\gamma+4>3$. Then, since $X$ has maximum degree at most $3$, there are at least $\gamma+1$ edges in $E(H)\setminus E(X)$ incident with $v$. Moreover, for each such edge, its end distinct from $v$ has degree at least two in $X$, and so degree at least $3$ in $H$. But then $v$ is a vertex of degree at least $3$ in $G$ with at least $\gamma+1$ neighbors, each of degree at least $3$ in $G$. This violates $\gamma_3(G)\leq \gamma$, and so proves the claim. Now, by Theorem \ref{boundeddeg}, $H$, and so $G$, contains a subdivision of $W_{k\times k}$
or the line graph of a subdivision of $W_{k\times k}$. 
\end{proof}
We remark that Theorem \ref{smalldensecomps} is sharp, in the sense that the conclusion fails if the number $3$ in $\gamma_3(G)$ is replaced by any larger integer. This is due to the construction of \cite{Davies}, in which the set of vertices of degree $4$ or more is stable. Next, we deduce:
\begin{theorem}
  \label{nobarbell}
  For all $t$, there exists $M = M(t)$ such that every graph in
  $\mathcal{F}_t$ with no heavy seagull  
    and with treewidth more than $M$ contains a subdivision of $W_{t\times t}$.
\end{theorem}

\begin{proof}
     Since $G$ contains no heavy seagull, it follows that no two vertices of degree at least three in $G$ are at distance two in $G$. This implies that every connected component of the subgraph of $G$ induced by the set of vertices of  degree at least three in $G$ is a clique, and therefore has size at most $t$. It follows that $\gamma_3(G)\leq t-1$. Also, since $G \in \mathcal{F}$, no induced subgraph of
  $G$ is the line graph of a  subdivision of $W_{3\times 3}$. Now
  Theorem \ref{nobarbell} follows from Theorem \ref{smalldensecomps}.
  \end{proof}

We are now ready to prove Theorem \ref{t=2}, the main result of this section, which we restate. 
\begin{theorem}
  \label{t=2_restate}
  For all $k$, there exists $c = c(k)$ such that every graph
in $\mathcal{F}_2$ with no star cutset
    and with treewidth more than $c$ contains a subdivision of $W_{k\times k}$.
\end{theorem}
\begin{proof}
Let $M=M(k) \geq 1$ be as in Theorem \ref{nobarbell}.
Let $G \in \mathcal{F}_2$ and assume that $G$ does not contain a subdivision of $W_{k\times k}$. We show that $\tw(G) \leq 8(M+1)$. Suppose not.
By Lemma \ref{lemma:bs-to-tw}, there is a weight function $w$ on $G$ such that $(G,w)$ is $4(M+1)$-unbalanced, and in particular
$8$-unbalanced. 
  Let $\mathcal{H}$ be the  set of all 
  heavy seagulls of
  $G$. By Lemma \ref{norestricted}, every seagull in $\mathcal{H}$ is extendable.
 Let $\mathcal{S}$ be the set of all pairs of cliques $\{K_1,K_2\}$
  obtained by applying Theorem \ref{breakextheavyseagull} to each member of $\mathcal{H}$.
  Then all elements of $\mathcal{S}$ are active. Let $\mathcal{T}$ be the set of the canonical two-clique-separations
  corresponding to the members of $\mathcal{S}$. By Theorem \ref{noncrossing2cliques}
  every pair of members of $\mathcal{T}$ is loosely non-crossing. Let
  $\beta$ be a central bag for $\mathcal{T}$.

 \sta{\label{simplerbeta} There is no heavy seagull in $\beta$.}

  Suppose $X=a \dd b \dd c$ is a heavy seagull in $\beta$. Then $X \in \mathcal{H}$, and so there is a separation $(A,B,C) \in \mathcal{T}$ such that $\{a,c\}\cap   A \neq \emptyset$. We may assume that $a \in A$. It follows from the
  definition of $\beta$ that there exists a pair $\{K_1,K_2\} \in \mathcal{S}$
  such that $a \in P_{K_1K_2}^*$. But then $\deg_{\beta}(a)=2$, contrary to the fact that $X$ is a heavy seagull of $\beta$. This proves \eqref{simplerbeta}.
\\
\\
Recall that for  $v \in V(G)$ we have defined 
$\delta_{\mathcal{S}}(v)=\bigcup_{K \text {: }v \in K \text{ and there exists } L \text{ such that }\{K,L\} \in \mathcal{S}}K.$

 \sta{\label{smalldelta} $|\delta_{\mathcal{S}}(v)| \leq 2$
      for every $v \in \beta$.}

  Suppose $|\delta_{\mathcal{S}}(v)| >2$ for some $v \in \beta$. Then there exist pairs
  $\{K_1,K_2\}, \{K_1', K_2'\} \in \mathcal{S}$ such that $v \in K_1 \cap K_1'$.
  Let $K_1=\{k_1,v\}$ and $K_1'=\{k_1',v\}$. Since $G \in \mathcal{F}_2$, it follows that $k_1 \dd v \dd k_2$ is a seagull in $G$. Since $k_1 \in K_1$,
  it follows from Lemma $\ref{active}$ that $k_1$ has a neighbor in $B(K_1,K_2)$.
  Since all elements of $\mathcal{S}$ are active, and therefore proper, we deduce that $k_1$ has a 
  neighbor in $A(K_1,K_2)$. Since $v \in C(K_1,K_2)$, we deduce that $\deg_G(k_1)>2$. 
  Similarly, $\deg_G(k_1')>2$. Consequently, $k_1 \dd v \dd k_1'$ is a heavy
  seagull of $G$. It follows that there exists a pair $\{L_1,L_2\} \in \mathcal{T}$
  such that $A(L_1,L_2) \cap \{k_1,k_1'\} \neq \emptyset$, say $k_1 \in A(L_1,L_2)$. But then $k_1 \in A(L_1,L_2) \cap C(K_1,K_2)$, contrary to Theorem
  \ref{noncrossing2cliques}. This proves \eqref{smalldelta}.
  \\
  \\
  It follows from \eqref{simplerbeta} that there is no heavy  seagull in
  $\beta$.  By Theorem \ref{nobarbell}, since $G$ does not contain a subdivision of $W_{k\times k}$, 
  we have that $\tw(\beta) \leq M$.
    Let   $w_\beta$ be the inherited weight function on $\beta$.
 Since $\tw(\beta) \leq M$,  Lemma \ref{lemma:tw-to-weighted-separator} implies that
  $(\beta,w_\beta)$
 is $(M+1)$-balanced. Now, by \eqref{smalldelta} and  Theorem~\ref{unbalancedbeta} $(G,w)$ is
 $\max(4(M+1),2(M+1))$-balanced, and therefore $(G,w)$ is $4(M+1)$-balanced, a contradiction. 
\end{proof}

\section{Putting everything together}
\label{sec:theend}

In this section, we prove Theorem \ref{sparsethm}, which we restate.

\begin{theorem}
  \label{sparsethm_restate}
  For all $t> 0$, there exists $c = c(t)$ such that every graph in
  $\mathcal{F}_t$ with 
  treewidth more than $c$ contains a subdivision of $W_{t\times t}$ as an
  induced subgraph.
\end{theorem}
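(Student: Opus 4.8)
The plan is to derive Theorem \ref{sparsethm_restate} from the reductions already established in the paper, by an induction on $|V(G)|$ that uses clique cutsets as the decomposition mechanism. Fix $t > 0$, let $c_0 = c_0(t)$ be the constant supplied by Theorem \ref{t=2_restate} applied with $k = t$, and set $c = c(t) := \max(c_0, t - 1)$. Suppose $G \in \mathcal{F}_t$ with $\tw(G) > c$; the goal is to exhibit an induced subdivision of $W_{t \times t}$ in $G$, arguing by induction on $|V(G)|$. If $G$ is disconnected, then some component of $G$ has treewidth more than $c$, and the induction hypothesis applied to it finishes the job, so we may assume $G$ is connected.

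Next I would dispose of the case where $G$ admits a clique cutset, i.e.\ a separation $(A, C, B)$ with $A, B \neq \emptyset$ and $C$ a clique. Then $G_1 := G[A \cup C]$ and $G_2 := G[B \cup C]$ are proper induced subgraphs of $G$, and both lie in $\mathcal{F}_t$ because that class is closed under taking induced subgraphs (being sparse, being (prism, pyramid)-free, and having no clique of size $t+1$ are each hereditary). By the special case of Lemma~3.1 of \cite{cliquetw} cited earlier, $\tw(G) = \max(\tw(G_1), \tw(G_2))$, so some $G_i$ has treewidth more than $c$; the induction hypothesis gives an induced subdivision of $W_{t \times t}$ in $G_i$, hence in $G$.

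So assume $G$ admits no clique cutset. Since $G$ is very sparse and hence sparse, the last sentence of Lemma \ref{startoclique}, read contrapositively, shows that $G$ admits no star cutset. As $G \in \mathcal{F}_t \subseteq \mathcal{F}$ has no star cutset, Lemma \ref{cliqueortrianglefree} forces $G \in \mathcal{F}_2$ or $G$ to be a complete graph. If $G$ is complete, then $G = K_s$ with $s \leq t$ (since $G$ has no clique of size $t+1$), so $\tw(G) = s - 1 \leq t - 1 \leq c$, contradicting $\tw(G) > c$; hence $G \in \mathcal{F}_2$. Now $G \in \mathcal{F}_2$, $G$ has no star cutset, and $\tw(G) > c \geq c_0$, so Theorem \ref{t=2_restate} (with $k = t$) provides the desired induced subdivision of $W_{t \times t}$, completing the induction.

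I do not expect a genuine obstacle in this final step: all the substantive work is already contained in Theorem \ref{t=2_restate} and the machinery feeding it (Theorems \ref{unbalancedbeta}, \ref{noncrossing2cliques}, \ref{breakextheavyseagull} and Lemma \ref{norestricted}). The only matters needing a moment's care are: choosing the constant $c$ large enough to absorb the complete-graph case; verifying that $\mathcal{F}_t$ is hereditary, so the clique-cutset induction stays inside the class; and applying Lemma \ref{startoclique} in the correct direction, so that ``no clique cutset'' delivers ``no star cutset'', which is precisely the hypothesis that both Lemma \ref{cliqueortrianglefree} and Theorem \ref{t=2_restate} require.
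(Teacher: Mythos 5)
Your proposal is correct and follows essentially the same route as the paper: reduce via clique cutsets (the safe-separator fact from \cite{cliquetw}), deduce the absence of a star cutset from Lemma \ref{startoclique}, invoke Lemma \ref{cliqueortrianglefree} to land in $\mathcal{F}_2$ or the complete-graph case, and finish with Theorem \ref{t=2_restate}. The only difference is presentational: you spell out the clique-cutset step as an explicit induction on $|V(G)|$ (checking that $\mathcal{F}_t$ is hereditary), where the paper simply says clique cutsets do not affect treewidth and assumes there are none.
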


\begin{proof}
  Let $c = c(t)$ be as in Theorem \ref{t=2_restate}. By increasing $c(t)$, we may assume that $c(t) \geq t$.
  Let $G \in \mathcal{F}_t$, and suppose that $\tw(G)>c$.
  Lemma 7 from \cite{cliquetw} shows that clique cutsets do not affect treewidth, and so we may assume that $G$ does not admit a clique cutset.  Now we deduce from Lemma \ref{startoclique} that $G$ does not admit a star cutset.
  By Lemma \ref{cliqueortrianglefree} it follows that either $G \in \mathcal{F}_2$, or $G$ is a complete graph (and so $\tw(G) \leq t$). So we may assume that $G \in \mathcal{F}_2$.   But now the result follows from Theorem \ref{t=2_restate}.
\end{proof}

\end{document}